\newcommand{\vol}{\mathrm{Vol}}
\newcommand{\I}{\mathrm{i}}
\newcommand{\me}{\mathcal{E}}
\newcommand{\ms}{\mathcal{S}}
\newtheorem{thm}{Theorem}[section]
\newtheorem{lmm}[thm]{Lemma}
\newtheorem{cor}[thm]{Corollary}
\newtheorem{defn}[thm]{Definition}
\theoremstyle{definition}
\newcommand{\cov}{\mathrm{Cov}}
\newcommand{\ee}{\mathbb{E}}
\newcommand{\ma}{\mathcal{A}}
\newcommand{\mf}{\mathcal{F}}
\newcommand{\pp}{\mathbb{P}}
\newcommand{\tr}{\operatorname{Tr}}
\newcommand{\var}{\mathrm{Var}}
\newcommand{\ve}{\varepsilon}
\numberwithin{equation}{section}
\renewcommand{\Re}{\operatorname{Re}}
\newcommand{\Z}{\mathbb{Z}}
\newcommand{\R}{\mathbb{R}}
\newcommand{\C}{\mathbb{C}}
\newcommand{\E}{\mathbb{E}}
\renewcommand{\bar}{\overline}
\renewcommand{\P}{\mathbb{P}}
\renewcommand{\tilde}{\widetilde}
\newcommand{\sun}{\mathrm{SU}(N)}
\newcommand{\sutwo}{\mathrm{SU}(2)}
\newcommand{\uone}{\mathrm{U}(1)}
\begin{document}
\title{A scaling limit of $\mathrm{SU}(2)$ lattice Yang--Mills--Higgs theory}
\author{Sourav Chatterjee\thanks{Department of Statistics, Stanford University, USA. Email: \href{mailto:souravc@stanford.edu}{\tt souravc@stanford.edu}. 
}}
\affil{Stanford University}

\maketitle

\begin{abstract}
The construction of non-Abelian Euclidean Yang--Mills theories in dimension four, as scaling limits of lattice Yang--Mills theories or otherwise, is a central open question of mathematical physics. This paper takes the following small step towards this goal. 
In any dimension $d\ge 2$, we construct a scaling limit of $\mathrm{SU}(2)$ lattice Yang--Mills theory coupled to a Higgs field (under the degenerate potential) transforming in the fundamental representation of $\mathrm{SU}(2)$. After unitary gauge fixing and taking the lattice spacing $\varepsilon\to 0$, and simultaneously taking  the gauge coupling constant $g\to 0$ and the Higgs length $\alpha\to \infty$ in such a manner that $\alpha g$ is always equal to $c\varepsilon$ for some fixed $c$ and $g= O(\varepsilon^{50d})$, a stereographic projection of the gauge field is shown to converge to a massive Gaussian field. This gives the first construction of a scaling limit of a non-Abelian lattice Yang--Mills theory in a dimension higher than two, as well as the first rigorous proof of mass generation by the Higgs mechanism in such a theory. Analogous results are proved for $\uone$ theory as well. The question of constructing a non-Gaussian scaling limit remains open.
\newline
\newline
\noindent {\scriptsize {\it Key words and phrases.} Higgs mechanism, Yang--Mills theory, mass gap.}
\newline
\noindent {\scriptsize {\it 2020 Mathematics Subject Classification.} 70S15, 81T13, 81T25, 82B20.}
\end{abstract}

\tableofcontents

\section{Lattice Yang--Mills theory with a Higgs field}\label{intro}
Quantum Yang--Mills theories are the building blocks of the Standard Model of particle physics. While these are quantum theories in Minkowski space, there are analogues in Euclidean space,  known as  Euclidean Yang--Mills theories. Physicists believe that calculations in the Euclidean theories can be carried over to the quantum theories. The constructive QFT program aims to justify this mathematically, although it is not yet well-developed for Yang--Mills theories~\cite{glimmjaffe87, jaffewitten06}. 

Euclidean Yang--Mills theories are supposed to be classical statistical mechanical models of quantum Yang--Mills theories. However, we do not know how to construct even these classical models in a rigorous way in dimensions higher than two.  Lattice Yang--Mills theories, also known as lattice gauge theories, are discretized versions of Euclidean Yang--Mills theories that are mathematically well-defined. 

\subsection{Definition}\label{lgtsec}
Let $G$ be a compact Lie group, called the {\it gauge group} of the theory. Let $d\ge 2$ and $L\ge 1$ be two integers, and let $\Lambda := \{-L,\ldots,L\}^d$. Our theory will be defined on $\Lambda$, and eventually $L$ will be taken to infinity. Throughout this paper, we will consider theories on $\Lambda$ with {\it periodic boundary condition}, meaning that opposite faces of $\Lambda$ are identified to be the same, giving $\Lambda$ the structure of a torus. 

Let $E$ denote the set of edges of $\Lambda$ that are oriented in the  positive direction (i.e., edges like $e= (x,y)$ where $x\prec y$ in the lexicographic ordering). Note that this definition makes sense even after identifying opposite faces, because two edges that are identified to be the same are either both positively oriented, or both negatively oriented. The space of gauge field configurations of the theory is $\Sigma := G^E$. That is, a gauge field configuration is an assignment of group elements to edges. Let us denote configurations as $U = (U_e)_{e\in E}$. If $e=(x,y)\in E$ and $e' = (y,x)$ is the negatively oriented version of $e$, then we define $U_{e'} := U_e^{-1}$.

A {\it plaquette} in $\Lambda$ is a square bounded by four edges. Given a configuration $U$ and a plaquette $p$ bounded by four directed edges $e_1,e_2,e_3,e_4$ joined end-to-end (see Figure~\ref{plaquette1}), we define
\[
U_p := U_{e_1}U_{e_2}U_{e_3}U_{e_4}.
\]
Note that there is an ambiguity here about the choice of the first edge and the direction of traversal, but that would be immaterial because we will only deal with the quantities that are not affected by these choices.

\begin{figure}
\begin{center}
\begin{tikzpicture}[scale = 2.5]
\draw [-Stealth, thick] (0,0) to (1,0);
\draw [-Stealth, thick] (1,0) to (1,1);
\draw [-Stealth, thick] (1,1) to (0,1);
\draw [-Stealth, thick] (0,1) to (0,0);
\node at (.5,-.2) {$e_1$};
\node at (1.2,.5) {$e_2$};
\node at (.5,1.2) {$e_3$};
\node at (-.2,.5) {$e_4$};
\end{tikzpicture}
\caption{A plaquette bounded by four directed edges joined end-to-end.\label{plaquette1}}
\end{center}
\end{figure}
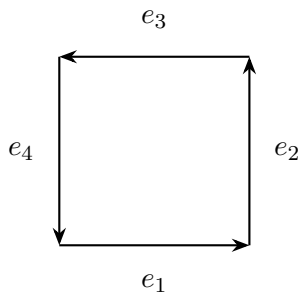

Let $K$ be a positive integer. A {\it Higgs field} $\phi$ is a map from $\Lambda$ into $\C^K$. Let $\Gamma := (\C^K)^\Lambda$ denote the set of all Higgs fields (where again, vertices on opposite faces are identified to be the same). The space of configurations for our lattice gauge theory coupled to a Higgs field is $\Sigma \times \Gamma$.

Choose a number $g>0$, called the {\it gauge coupling constant}, and a number $\alpha >0$, called the {\it Higgs length}.  Let $\rho$ be a finite dimensional representation of $G$, and $\chi_\rho$ be its character. 
Let $\sigma$ be a representation of $G$ in $\C^K$, which is unitary with respect to the usual inner product on $\C^K$. Let $W:[0,\infty) \to \R$ be a function, called the {\it Higgs potential,} that grows faster than quadratic at infinity.  The {\it Yang--Mills action} is defined as
\begin{align*}
S(U, \phi) &:=\frac{1}{g^2}\sum_{p\in P} \Re(\chi_\rho(U_p)) + \alpha^2 \sum_{e=(x,y)\in E} \Re(\phi_x^*\sigma(U_e)\phi_y)  - \sum_{x\in \Lambda} W(\|\phi_x\|).
\end{align*}
The theory defines a probability measure $\mu$ on $\Sigma \times \Gamma$ as 
\[
d\mu(U,\phi) = Z^{-1}e^{S(U, \phi)} \prod_{e\in E} dU_e \prod_{x\in \Lambda} d\phi_x,
\]
where $Z$ is the normalizing constant, $dU_e$ denotes normalized Haar measure on $G$ and $d\phi_x$ denotes Lebesgue measure on $\C^K$. 

Our main results will be for an {\it infinite volume Gibbs measure} for this model on the lattice $\Z^d$, when $W$ is the degenerate potential that is zero at $1$ and infinity away from~$1$ (which simply means that $\phi_x$ is forced to take values on the unit sphere in $\C^K$). Such Gibbs measures are obtained by taking weak limits of the probability measure $\mu$ defined above as $L\to \infty$. Since we are considering only periodic boundary condition, any Gibbs measure obtained like this is automatically translation invariant. Until we arrive at the main results, however, we will continue the discussion around the models in finite volume (that is, on $\Lambda$).

\subsection{Gauge transforms and gauge fixing}
Let $\Theta := G^\Lambda$. That is, an element $\theta\in \Theta$ assigns group elements to vertices. Elements of $\Theta$ are called {\it gauge transforms.} Gauge transforms form a group under pointwise multiplication, which acts on $\Sigma \times \Gamma$ as follows. For $(U,\phi)\in \Sigma \times \Gamma$ and $\theta \in \Theta$, we define $(V, \psi):= \theta(U,\phi)$ as
\[
V_e = \theta_x U_e \theta_y^{-1}, \ \ \psi_x = \sigma(\theta_x) \phi_x,
\]
where $e= (x,y)$. Using the unitarity of $\sigma$, it is not hard to verify that $S$ is {\it gauge invariant}, that is, $S(\theta(U,\phi)) = S(U,\phi)$. Consequently, the theory has {\it gauge symmetry,} meaning that the probability measure $\mu$ is preserved by gauge transforms. 

Any procedure that selects exactly one element from each orbit of $\Theta$ is called a method for {\it fixing a gauge}. To understand the behavior of a lattice Yang--Mills theory, it suffices to understand a gauge fixed version of the theory, because physically relevant observables are required to be invariant under gauge transforms.

\subsection{Example: $\mathrm{U}(1)$ lattice Yang--Mills theory with a Higgs field}\label{uonesec}
Let $G:=\uone$, $K=1$, and $\rho = \sigma =  $ the fundamental representation of $\uone$ (i.e., the representation that acts on $\C$ by ordinary multiplication). Let $W$ be the degenerate Higgs potential that is $0$ on the unit circle $S^1$ and $\infty$ outside. This potential is a common choice in the literature~\cite{fradkinshenker79, seiler82}, partly because it simplifies calculations without damaging the physical relevance of the theory. This forces the Higgs field at each site to be an element of $S^1$. We may therefore take the space of configurations to be $\Sigma \times \Gamma$ where $\Sigma = \uone^E$ and $\Gamma = (S^1)^\Lambda $. The Yang--Mills action on this space is 
\begin{align}\label{action1}
S(U,\phi) = \frac{1}{g^2}\sum_{p\in P} \Re(U_p) + \alpha^2 \sum_{e=(x,y)\in E} \Re(\phi_x^*U_e \phi_y),
\end{align}
where $\phi_x^*$ is the conjugate of the complex number $\phi_x$. 
The probability measure $\mu$ on $\Sigma \times \Gamma$ is now defined as 
\[
d\mu(U,\phi) = Z^{-1}e^{S(U, \phi)} \prod_{e\in E} dU_e \prod_{x\in \Lambda} d\phi_x,
\]
where $Z$ is the normalizing constant, $dU_e$ denotes normalized Haar measure on $G$ and $d\phi_x$ denotes the uniform probability measure on $S^1$ (instead of Lebesgue measure on $\C$). 

A standard way to fix the gauge in the above theory, known as {\it unitary gauge fixing}, is defined as follows. Take any configuration $(U, \phi)$. For each $x\in \Lambda$, let $\theta_x := \phi_x^*$. Let $(V,\psi) := \theta(U, \phi)$. Note that for any edge $e=(x,y)\in E$, $V_e = \phi_x^*U_e \phi_y$, and for any vertex $x\in \Lambda$, $\psi_x = 1$. Thus, if our interest lies in understanding only the behavior of gauge invariant observables, it suffices to study the field $V$.

\subsection{Example: $\sutwo$ lattice Yang--Mills theory with a Higgs field}\label{sutwosec}
Let $G:=\sutwo$, $K=2$, and $\rho = \sigma =  $ the fundamental representation of $\sutwo$ (i.e., the representation that acts on $\C^2$ in the usual way). Let $W$ be the degenerate Higgs potential that is $0$ on the unit sphere $S^3$ of $\R^4$ (i.e., the set $\{(w,z)\in \C^2: |w|^2 + |z|^2 = 1\}$, which is the same as the unit sphere $S^3 \subseteq \R^4$) and $\infty$ outside. This forces the Higgs field at each site to be an element of $S^3$. Thus,  the space of configurations is $\Sigma \times \Gamma$ where $\Sigma = \sutwo^E$ and $\Gamma = (S^3)^\Lambda $. The Yang--Mills action is 
\begin{align}\label{action2}
S(U,\phi) = \frac{1}{g^2}\sum_{p\in P} \Re(\tr(U_p)) + \alpha^2 \sum_{e=(x,y)\in E} \Re(\phi_x^*U_e \phi_y),
\end{align}
where $\phi_x^*$ is the conjugate transpose of the vector $\phi_x$.  The probability measure $\mu$ on $\Sigma \times \Gamma$ is now 
\[
d\mu(U,\phi) = Z^{-1}e^{S(U, \phi)} \prod_{e\in E} dU_e \prod_{x\in \Lambda} d\phi_x,
\]
where $Z$ is the normalizing constant, $dU_e$ denotes normalized Haar measure on $G$ and $d\phi_x$ denotes the uniform probability measure on $S^3$. 

Unitary gauge fixing in this setting is defined as follows. Take any configuration $(U, \phi)$. For each $x\in \Lambda$, it is not hard to see that there is a unique element $\theta_x\in \sutwo$ such that 
\[
\theta_x \phi_x = e_1 := 
\begin{pmatrix}
1 \\
0
\end{pmatrix}.
\]
Let $(V,\psi) := \theta(U, \phi)$. Then for any edge $e=(x,y)\in E$, $V_e = \theta_xU_e \theta_y^*$, and for any vertex $x\in \Lambda$, $\psi_x = e_1$. Thus, as before, if our interest lies in understanding only the behavior of gauge invariant observables, it suffices to study the field $V$. 

\section{Random distributional $1$-forms}
The main results of this paper are in the next section. The results show that the $\uone$ and the $\sutwo$ lattice Yang--Mills theories defined in Subsections \ref{uonesec} and \ref{sutwosec} converge, under appropriate scalings  of the lattice, to {\it random distributional $1$-forms}. In this section, we define these objects and describe some of their essential properties.

\subsection{Definition}
Recall that a function $f:\R^d\to \R$ is said to be {\it rapidly decaying} if for any $\alpha >0$, there is some $C$ such that $|f(x)|\le C(1+\|x\|)^{-\alpha}$ for all $x\in \R^d$. Let $\ms(\R^d)$ denote the space of real-valued Schwartz functions on $\R^d$, i.e., infinitely differentiable functions $f$ such that $f$ and all its derivatives of all orders are rapidly decaying functions.  We define a {\it Schwartzian $1$-form} on $\R^d$ as a $d$-tuple $f= (f_1,\ldots,f_d)$ of real-valued Schwartz functions on $\R^d$. As a matter of notation, we will write $f= f_1dx_1+\cdots +f_d dx_d$. Let $\ma(\R^d)$ denote the space of Schwartzian $1$-forms on $\R^d$.

Let $(\Omega, \mf ,\P)$ be a probability space. We define a {\it random $1$-form} on $\R^d$ to be a measurable map $X:\Omega \times \R^d \to \R^d$ that satisfies, for any compact set $K\subseteq \R^d$, any $1\le i\le d$, and almost every $\omega \in \Omega$,
\[
\int_K |X_i(\omega,x)| dx < \infty.
\]
Note that we are not imposing any smoothness assumptions on $X$. A random $1$-form $X$ defines a random linear functional on $\ma(\R^d)$, which we also denote by $X$, as follows:
\[
X(f)(\omega) := \int_{\R^d} \sum_{i=1}^dX_i(\omega, x) f_i(x) dx.
\]
Generalizing this type of random linear functional, we will say that any collection of random variables $X = (X(f))_{f\in \ma(\R^d)}$ is a {\it random distributional $1$-form} if it satisfies the condition that for any $f,g\in \ma(\R^d)$ and any $a,b\in \R$, 
\[
X(a f+ bg) = aX(f) + bX(g) \ \ \text{with probability one.}
\]
We will say that a sequence of random distributional $1$-forms $\{X_n\}_{n\ge 1}$ converges in law to a random distributional $1$-form $X$ if for any $f\in \ma(\R^d)$, the random variable $X_n(f)$ converges in law to $X(f)$. By linearity of random distributional $1$-forms, it is not hard to see that if $X_n$ converges in law to $X$, then for any $k$ and any $f_1,\ldots, f_k$, the random vector $(X_n(f_1),\ldots,X_n(f_k))$ converges in law to $(X(f_1),\ldots,X(f_k))$.

\subsection{The Euclidean Proca field}
We now define a particular random distributional $1$-form, called the {\it Euclidean Proca field}. This field was defined by \citet{gross74} and \citet{ginibrevelo75} as the Euclidean version of the Proca field from quantum field theory in Minkowski spacetime~\cite{stuckelberg38}. As we will see below, this field is massive (i.e., has exponential decay of correlations). The massless version of the Euclidean Proca field arises in lattice Maxwell theory~\cite[Chapter 22]{glimmjaffe87}. 

Recall that $d\ge 2$. Given $\lambda >0$, define a function $K_\lambda: \R^d \setminus \{0\} \to \R$ as
\begin{align}\label{klambdadef}
K_\lambda(x) := \int_0^\infty \frac{1}{(4\pi t)^{d/2}} \exp\biggl(-\frac{\|x\|^2}{4t} - \lambda t\biggr) dt.
\end{align}
It is easy to check that this is well-defined. Moreover, by the dominated convergence theorem, it is easy to see that $K_\lambda$ is a continuous function on its domain. We extend it to whole of $\R^d$ by defining $K_\lambda(0):= \infty$.  We will view $K_\lambda$ as an operator acting on Schwartz functions as
\[
K_\lambda f(x) := \int_{\R^d} K_\lambda(x-y) f(y) dy.
\]
The following lemma, which encapsulates the essential properties of $K_\lambda$, will be proved in Subsection \ref{klemmaproof}.
\begin{lmm}\label{klemma}
The operator $K_\lambda$ is well-defined on $\ms(\R^d)$. Moreover, it is a bijection from $\ms(\R^d)$ onto itself, with $K_\lambda^{-1} = -\Delta +\lambda I$, where $\Delta$ is the Laplacian operator and $I$ is the identity operator. Lastly, $K_\lambda$ commutes with the derivative operators $\partial_i := \partial/\partial x_i$, $i=1,\ldots,d$. 
\end{lmm}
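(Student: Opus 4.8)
The plan is to work with the Fourier transform, since the operator $K_\lambda$ is a convolution operator and should act as multiplication in frequency space. First I would compute the Fourier transform of the kernel $K_\lambda$. Using the definition \eqref{klambdadef} and Fubini's theorem, I would interchange the $t$-integral with the Fourier integral. The inner Gaussian integral $\int_{\R^d} (4\pi t)^{-d/2} \exp(-\|x\|^2/(4t)) e^{-\I \xi \cdot x}\, dx$ is a standard computation giving $e^{-t\|\xi\|^2}$, so that
\[
\widehat{K_\lambda}(\xi) = \int_0^\infty e^{-t\|\xi\|^2} e^{-\lambda t}\, dt = \frac{1}{\|\xi\|^2 + \lambda}.
\]
This is the crucial identity: it exhibits $K_\lambda$ as the Fourier multiplier $(\|\xi\|^2+\lambda)^{-1}$, which is exactly the symbol of $(-\Delta + \lambda I)^{-1}$.

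With this in hand, the three claims follow cleanly. For well-definedness on $\ms(\R^d)$, I would note that $\widehat{K_\lambda}$ is a smooth, bounded multiplier all of whose derivatives have at most polynomial growth (in fact they decay); multiplying a Schwartz function $\hf$ by $(\|\xi\|^2+\lambda)^{-1}$ keeps it Schwartz, and taking the inverse Fourier transform returns a Schwartz function, so $K_\lambda f = \big((\|\xi\|^2+\lambda)^{-1}\hf\big)^\vee \in \ms(\R^d)$. For the inverse, since $-\Delta + \lambda I$ acts in frequency space as multiplication by $\|\xi\|^2 + \lambda$, composing the two multipliers gives the identity in both orders, so $K_\lambda$ is a bijection on $\ms(\R^d)$ with $K_\lambda^{-1} = -\Delta + \lambda I$. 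For the commutation with $\partial_i$, the operator $\partial_i$ acts in frequency space as multiplication by $\I \xi_i$, and since multiplication operators commute, $\partial_i$ commutes with $K_\lambda$; equivalently, $K_\lambda$ is a convolution operator and convolution always commutes with differentiation.

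The main technical obstacle, such as it is, lies in justifying the Fubini interchange and the manipulations near the singularity at $\xi = 0$ and at $x = 0$. Away from $\xi=0$ everything is absolutely convergent, but I would want to confirm that the $t$-integral defining $K_\lambda(x)$ and the subsequent Fourier computation are legitimate for all $\xi \neq 0$, and then handle the multiplier $(\|\xi\|^2+\lambda)^{-1}$ at $\xi=0$, where it is finite and smooth because $\lambda > 0$ (this is precisely where the strict positivity of $\lambda$, i.e.\ the mass, is used). I would verify integrability of $(4\pi t)^{-d/2}\exp(-\|x\|^2/(4t) - \lambda t)$ jointly in $(t,x)$ to license Fubini, noting the small-$t$ Gaussian decay and large-$t$ exponential decay from the $\lambda t$ term. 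Once the multiplier identity is established rigorously, the rest is bookkeeping with the standard fact that the Fourier transform is a bijection of $\ms(\R^d)$ onto itself and intertwines $K_\lambda$ with multiplication by a benign symbol.
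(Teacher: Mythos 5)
Your proposal is correct, but it takes a genuinely different route from the paper. The paper works entirely in real space: it first proves pointwise upper bounds on the kernel $K_\lambda(x)$ (exponential decay, with a separate treatment of $d=2$ where the kernel has a logarithmic singularity), then uses dominated convergence to show $K_\lambda$ maps $\ms(\R^d)$ into itself and commutes with $\partial_i$, and finally establishes $(-\Delta+\lambda I)K_\lambda = I$ by a direct computation --- writing $\Delta(K_\lambda f)$ via Fubini, integrating by parts against the heat kernel, recognizing the $t$-derivative of the heat kernel, and integrating by parts in $t$, with the boundary term at $t=\ve\downarrow 0$ producing $f(x)$ by the approximate-identity property of the Gaussian. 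You instead compute $\widehat{K_\lambda}(\xi) = (\|\xi\|^2+\lambda)^{-1}$ and read off all three claims from standard Fourier-multiplier facts on $\ms(\R^d)$. Your Fubini worry is easily dispatched: the joint integrand in $(t,x)$ is nonnegative with total integral $\int_0^\infty e^{-\lambda t}\,dt = 1/\lambda$, so $K_\lambda \in L^1(\R^d)$ and the convolution theorem applies (the kernel's singularity at $x=0$ is locally integrable, and $\lambda>0$ keeps the symbol smooth at $\xi=0$, as you note); one should also observe that $K_\lambda f$, defined by the convolution integral, agrees with the inverse transform of the product by continuity. Your argument is shorter and more transparent, and it proves the same statement. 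What the paper's heavier real-space route buys is quantitative pointwise control of the kernel --- the explicit exponential-decay bounds of its Lemma 4.1 --- which the multiplier identity alone does not give and which the paper reuses later, both to prove that $K_\lambda f$ is rapidly decaying with explicit error terms and, in refined asymptotic form, to extract the exact correlation-decay rate $\|x\|^{-(d-1)/2}e^{-\sqrt{\lambda}\|x\|}$ of the Proca field in Lemma 2.7. If you only needed Lemma 2.1 itself, your proof would be the more economical choice.
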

Having defined the operator $K_\lambda$ on $\ms(\R^d)$, we now define an operator  $R_\lambda$ on $\ma(\R^d)$ as
\begin{align}
R_\lambda f &:= \sum_{i=1}^d \biggl(K_\lambda f_i - \lambda^{-1}\sum_{j=1}^d \partial_i \partial_j K_\lambda f_j\biggr) dx_i.\label{rdef}
\end{align}
By Lemma \ref{klemma}, it is clear that $R_\lambda$ maps $\ma(\R^d)$ into itself. The following lemma, proved in Subsection \ref{qlemmaproof}, summarizes the main properties of $R_\lambda$.
\begin{lmm}\label{qlemma}
The operator $R_\lambda$ is a bijection of $\ma(\R^d)$ onto itself, with $R_\lambda^{-1} = Q_\lambda$ given~by
\begin{align}
Q_\lambda f &=\sum_{i=1}^d \biggl(-\Delta f_i+ \lambda f_i +\sum_{j=1}^d \partial_i \partial_j f_j\biggr)dx_i. \label{qdef}
\end{align}
\end{lmm}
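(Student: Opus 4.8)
The plan is to verify directly that $Q_\lambda$ as defined in \eqref{qdef} is a two-sided inverse of $R_\lambda$, so that $R_\lambda$ is automatically a bijection. Since both operators are defined componentwise in terms of $K_\lambda$, $\Delta$, and the derivatives $\partial_i$, all of which are honest linear operators on $\ms(\R^d)$ by Lemma \ref{klemma}, the entire argument reduces to a bookkeeping computation using the two facts supplied by Lemma \ref{klemma}: that $K_\lambda^{-1} = -\Delta + \lambda I$ and that $K_\lambda$ commutes with each $\partial_i$. First I would confirm that $Q_\lambda$ indeed maps $\ma(\R^d)$ into itself, which is immediate since $\Delta f_i$, $\lambda f_i$, and $\partial_i\partial_j f_j$ are all Schwartz functions whenever the $f_j$ are.

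The core of the proof is to compute the composition $R_\lambda Q_\lambda f$ and show it equals $f$, and then do the same for $Q_\lambda R_\lambda f$ (or argue that one suffices together with injectivity/surjectivity). I would start with $Q_\lambda R_\lambda$. Writing $g := R_\lambda f$, the $i$-th component of $g$ is $g_i = K_\lambda f_i - \lambda^{-1}\sum_j \partial_i\partial_j K_\lambda f_j$, and I must apply $-\Delta + \lambda I$ to $g_i$ and add $\sum_k \partial_i\partial_k g_k$. The key move is to push $-\Delta + \lambda I = K_\lambda^{-1}$ through the commuting operators: applying $K_\lambda^{-1}$ to $K_\lambda f_i$ gives $f_i$, and applying it to $\lambda^{-1}\sum_j \partial_i\partial_j K_\lambda f_j$ gives $\lambda^{-1}\sum_j \partial_i\partial_j f_j$ since $K_\lambda^{-1}$ commutes with the $\partial$'s and cancels $K_\lambda$. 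For the second term $\sum_k \partial_i\partial_k g_k$, I would substitute the formula for $g_k$ and use commutativity to collect the $K_\lambda$'s, producing a term involving $\sum_k \partial_i\partial_k K_\lambda f_k$ and a double-sum term involving $\lambda^{-1}\sum_{j,k}\partial_i\partial_k\partial_k\partial_j K_\lambda f_j$; here the Laplacian reappears as $\sum_k \partial_k^2 = \Delta$, and I can again invoke $K_\lambda^{-1} = -\Delta + \lambda I$ to rewrite $\Delta K_\lambda = \lambda K_\lambda - I$. The arithmetic is then arranged so that the stray terms cancel in pairs and only $f_i$ survives.

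The one genuine subtlety, and the step I expect to be the main obstacle, is keeping the index bookkeeping honest while repeatedly commuting $K_\lambda^{-1}$ past the derivatives and converting sums of second derivatives into the Laplacian. In particular one must be careful that $\Delta$ acting on a vector of functions distributes correctly and that the factor $\lambda^{-1}$ together with the identity $\Delta K_\lambda = \lambda K_\lambda - I$ produces exactly the cancellation needed; a sign error or a misplaced $\lambda$ would break the identity. Because all operators involved are linear and defined globally on Schwartz space, there are no convergence or domain issues to track beyond what Lemma \ref{klemma} already guarantees, so the computation is purely algebraic once the commutation relations are in hand.

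Finally, having established $Q_\lambda R_\lambda = I$ and $R_\lambda Q_\lambda = I$ on $\ma(\R^d)$, I conclude that $R_\lambda$ is a bijection with $R_\lambda^{-1} = Q_\lambda$, as claimed. If one prefers to compute only one composition, the symmetric structure of the two formulas makes the second verification essentially identical, so I would present one direction in full and remark that the other follows by the same manipulation.
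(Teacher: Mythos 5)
Your proposal is correct and follows essentially the same route as the paper: a direct componentwise computation of $Q_\lambda R_\lambda f$ using the two facts from Lemma \ref{klemma} (that $K_\lambda^{-1}=-\Delta+\lambda I$ and that $K_\lambda$ commutes with the $\partial_i$'s), with the cancellation via $\Delta K_\lambda=\lambda K_\lambda - I$ exactly as in the paper's proof. The only cosmetic difference is that the paper dispenses with the second composition by noting that $Q_\lambda$ and $R_\lambda$ commute (so $R_\lambda Q_\lambda = Q_\lambda R_\lambda = I$), which is marginally slicker than your suggestion of repeating the symmetric computation.
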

We are now ready to define the Euclidean Proca field on $\R^d$.
\begin{defn}\label{mmffdef}
We define the \emph{Euclidean Proca field} on $\R^d$ with parameter $\lambda$ to be a random distributional $1$-form $X$ with the property that for each $f\in \ma(\R^d)$, $X(f)$ is a Gaussian random variable with mean zero and variance 
\begin{align}\label{innproddef}
(f, R_\lambda f) := \int_{\R^d} f(x)\cdot R_\lambda f(x) dx,
\end{align}
where $R_\lambda$ is the operator defined in equation~\eqref{rdef}. 
\end{defn}
The above definition is somewhat opaque. It is easier to understand once we start writing things in the language of differential geometry. Let $d$ denote the exterior derivative operator acting on $\R$-valued differential forms on $\R^d$. The exterior derivative of  a $1$-form $f=\sum_{i=1}^d f_i dx_i$ is the $2$-form
\[
d f = \sum_{1\le i<j\le d} (\partial_i f_j - \partial_j f_i) dx_i \wedge dx_j.
\]
The adjoint of the exterior derivative (sometimes called the exterior coderivative), denoted by $d^*$, is the operator that satisfies $(df, g) = (f, d^*g)$ for any $k$-form $f$ and $(k+1)$-form $g$, for any $0\le k\le d-1$. The  coderivative of $2$-form $g = \sum_{1\le i<j\le d} g_{ij} dx_i \wedge dx_j$ is the $1$-form
\[
d^* g = - \sum_{i=1}^d \biggl(\sum_{j=1}^d \partial_j g_{ij}\biggr) dx_i.
\]
It is easy to check that the Laplacian operator acting on $1$-forms as
\[
\Delta f = \sum_{i=1}^d \Delta f_i dx_i
\]
is equal to $-(d^*d + dd^*)$. On the other hand, for a $1$-form $f$, $d^*f $ is simply $-\sum_{i=1}^d \partial_i f_i$, and therefore,
\[
dd^* f = -\sum_{i,j=1}^d \partial_i \partial_j f_j dx_i.
\]
Thus, the operator $Q_\lambda$ is equal to $-\Delta - dd^*+\lambda I$, which simplifies to $d^*d + \lambda I$.  Since by definition, the Proca field is a Gaussian field with inverse covariance kernel $Q_\lambda$, this means that heuristically, the probability density of the Proca field with respect `Lebesgue measure' on the space of fields is proportional to $\exp(-\frac{1}{2}(f, Q_\lambda f))$, which can be rewritten as
\begin{align}\label{diffrep}
\exp\biggl(-\frac{1}{2}(df, df) -\frac{\lambda}{2}(f,f)\biggr).
\end{align}
With the above interpretation, it is transparent why $R_\lambda = Q_\lambda^{-1}$. Indeed, since $dd = 0$, 
\begin{align*}
Q_\lambda R_\lambda &= (d^*d + \lambda I)(I + \lambda^{-1}dd^* ) K_\lambda \\
&= (d^*d + dd^* + \lambda I) K_\lambda = (-\Delta + \lambda I) K_\lambda = I.
\end{align*}
It is not obvious that the Euclidean Proca field actually exists. The following lemma asserts that it does. It is proved in Subsection \ref{mmffproof}, by showing that a certain sequence of discrete fields converges to the Euclidean Proca field in a scaling limit.
\begin{lmm}\label{mmfflemma}
In any dimension $d\ge 2$ and for any $\lambda >0$, the Euclidean Proca field exists. 
\end{lmm}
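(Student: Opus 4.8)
The plan is to build $X$ directly as a centered Gaussian process indexed by the vector space $\ma(\R^d)$, reducing existence to the single analytic fact that the prescribed variance functional $Q(f) := (f, R_\lambda f)$ is a nonnegative quadratic form. Once this is known, the associated symmetric bilinear form $B(f,g) := \tfrac12\bigl(Q(f+g) - Q(f) - Q(g)\bigr)$ is a positive semidefinite kernel on $\ma(\R^d)$: for any real coefficients $c_1,\dots,c_k$ and forms $f_1,\dots,f_k$, bilinearity gives $\sum_{a,b} c_a c_b B(f_a,f_b) = Q\bigl(\sum_a c_a f_a\bigr)\ge 0$. Hence every finite family $\{f_1,\dots,f_k\}$ carries a consistent centered Gaussian law on $\R^k$ with covariance matrix $[B(f_a,f_b)]$, and the Daniell--Kolmogorov extension theorem produces a process $\{X(f)\}_{f\in\ma(\R^d)}$ on some probability space with exactly these finite-dimensional marginals. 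In particular $X(f)$ is centered Gaussian with variance $Q(f) = (f,R_\lambda f)$, as demanded by Definition \ref{mmffdef}; since $R_\lambda$ is self-adjoint one moreover has $B(f,g) = (f, R_\lambda g)$.

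Establishing $Q(f)\ge 0$ is the crux, and I would do it on the Fourier side. By Lemma \ref{klemma}, $K_\lambda = (-\Delta+\lambda I)^{-1}$ acts as the Fourier multiplier $(\|\xi\|^2+\lambda)^{-1}$ (in the convention where $-\Delta$ has symbol $\|\xi\|^2$), and since $K_\lambda$ commutes with each $\partial_i$, formula \eqref{rdef} shows that $R_\lambda$ acts on the Fourier transform $\hat f = (\hat f_1,\dots,\hat f_d)$ by the matrix multiplier
\begin{align*}
M(\xi) = \frac{1}{\|\xi\|^2+\lambda}\Bigl(I + \lambda^{-1}\,\xi\xi^{\top}\Bigr).
\end{align*}
For every $\xi$ the matrix $I + \lambda^{-1}\xi\xi^{\top}$ is Hermitian and positive definite (the identity plus a nonnegative rank-one matrix), and the scalar prefactor is positive, so $M(\xi)\succeq 0$. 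Plancherel's theorem then yields $Q(f) = \int_{\R^d}\overline{\hat f(\xi)}^{\top} M(\xi)\,\hat f(\xi)\,d\xi\ge 0$, the integral being finite because $f\in\ma(\R^d)$ forces $\hat f$ to be Schwartz. Alternatively, avoiding the transform, set $g := R_\lambda f\in\ma(\R^d)$, so that $f = Q_\lambda g$ by Lemma \ref{qlemma} and $Q(f) = (Q_\lambda g, g)$; integrating \eqref{qdef} by parts (boundary terms vanishing since $g$ is Schwartz) gives the identity
\begin{align*}
(Q_\lambda g, g) = \lambda\int_{\R^d}\|g\|^2\,dx + \tfrac12\sum_{i,j=1}^d\int_{\R^d}(\partial_j g_i - \partial_i g_j)^2\,dx,
\end{align*}
which is manifestly nonnegative.

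Finally I would verify that the process furnished by Kolmogorov's theorem genuinely is a random distributional $1$-form in the sense of the text, i.e.\ that $X(af+bg) = aX(f)+bX(g)$ almost surely for each fixed $f,g\in\ma(\R^d)$ and $a,b\in\R$. Since $(X(f),X(g),X(af+bg))$ is jointly centered Gaussian with covariances given by $B$, the difference $Y := X(af+bg) - aX(f) - bX(g)$ is centered Gaussian, and a direct expansion using bilinearity (together with $B(af+bg,\cdot) = aB(f,\cdot) + bB(g,\cdot)$) shows $\Var(Y) = 0$; hence $Y = 0$ almost surely. As each such linear relation fails only on a null set, the defining condition of Definition \ref{mmffdef} holds.

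I expect the main obstacle to be bookkeeping rather than conceptual: the positivity of $M(\xi)$ is immediate, so the care is concentrated in the measure-theoretic setup --- checking consistency of the Gaussian marginals for the Kolmogorov extension and confirming that the continuum of almost-sure linear relations are each individually justified. I note that this abstract construction differs in spirit from the route advertised in the text, which instead exhibits $X$ as a scaling limit of explicit discrete Gaussian fields; that approach trades the positivity check for a Riemann-sum analysis showing that the discrete covariances converge to $(f,R_\lambda g)$, and would be the natural choice if one wants the same approximation scheme that drives the lattice Yang--Mills results.
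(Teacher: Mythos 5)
Your proposal is correct, but it takes a genuinely different route from the paper's. The paper proves Lemma \ref{mmfflemma} constructively, as a by-product of a scaling limit: it introduces the discrete Proca field on $\{-L,\ldots,L\}^d$ (Definition \ref{dmfdef}), shows in Theorem \ref{convthm} --- via Taylor expansion of $Qx$ and a Riemann-sum argument --- that the rescaled lattice field satisfies $Y(f)\sim \mathcal{N}(0,u^*Ru)$ with $u^*Ru \to (f,R_1f)$, and then invokes Lemma \ref{existlmm} (itself a Kolmogorov-consistency argument very close to your extension step) plus the scaling Lemma \ref{scalinglmm} to pass from $\lambda=1$ to general $\lambda$. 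On that route the nonnegativity of $(f,R_\lambda f)$, which you correctly isolate as the crux of any direct construction, is automatic, since it arises as a limit of variances $u^*Ru$ of honest Gaussian vectors with $R=Q^{-1}$ positive definite. Your construction instead handles positivity head-on, and both of your verifications are sound: the symbol of $R_\lambda$ is indeed $M(\xi)=(\|\xi\|^2+\lambda)^{-1}\bigl(I+\lambda^{-1}\xi\xi^{\top}\bigr)$, since $-\lambda^{-1}(\mathrm{i}\xi_i)(\mathrm{i}\xi_j)=+\lambda^{-1}\xi_i\xi_j$, and your integration-by-parts identity checks out because $\sum_{i,j}\int \partial_j g_i\,\partial_i g_j\,dx=\int(\sum_i\partial_ig_i)^2\,dx$ after two integrations by parts, yielding $(f,R_\lambda f)=\lambda\int\|g\|^2\,dx+\tfrac12\sum_{i,j}\int(\partial_jg_i-\partial_ig_j)^2\,dx$ with $g=R_\lambda f\in\ma(\R^d)$ (guaranteed by Lemma \ref{qlemma}) --- which is exactly the Euclidean Proca action, a nice structural bonus. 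The a.s.-linearity check via $\Var\bigl(X(af+bg)-aX(f)-bX(g)\bigr)=0$ matches what the paper does inside Lemma \ref{existlmm}. What each approach buys: yours is shorter, treats all $\lambda>0$ at once without the scaling lemma, and makes positive semidefiniteness transparent; the paper's is longer but the discrete field and the covariance machinery of Theorem \ref{convthm} (the matrices $Q$, $R$ and their spectral bounds) are precisely the tools reused downstream in Lemmas \ref{corrlmm}, \ref{normallmm} and \ref{convlmm} to compare the gauge-fixed lattice Yang--Mills field with its Proca limit, so existence there comes essentially for free --- as your closing remark correctly anticipates.
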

Note that by linearity, Definition \ref{mmffdef} determines the joint distribution of the random vector $(X(f_1),\ldots,X(f_k))$ for any $f_1,\ldots,f_k\in\ma(\R^d)$.

\subsection{Scaling and translation of the Euclidean Proca field}
We may intuitively think of any random distributional $1$-form $X$ as a random $1$-form by formally defining $X(x) := X(\delta_x)$, where $\delta_x$ is the Dirac delta at $x$. With this notation, linearity of $X$ implies that we formally have
\[
X(f) = X\biggl(\int_{\R^d} f(x)\delta_x dx \biggr) = \int_{\R^d} X(\delta_x) f(x) dx = \int_{\R^d} X(x)f(x)dx.
\]
The advantage of this formal expansion is that it allows us to talk about scaling and translating $X$. Given $a >0$ and $b\in \R^d$, let us define 
\begin{align}\label{yfromx}
Y(x) := X(ax + b).
\end{align}
This is what we get upon rescaling spacetime (i.e., $\R^d$ in this case) by $a$ and translating by $b$. 
What this means is that for any $f\in \ma(\R^d)$, 
\begin{align*}
Y(f) &= \int_{\R^d} Y(x) f(x) dx \\
&= \int_{\R^d} X(ax + b) f(x) dx \\
&= a^{-d} \int_{\R^d} X(z)f((z-b)/a) dz = a^{-d} X(\tau_{a,b} f),
\end{align*}
where $\tau_{a,b} f$ is the function $g(x):= f((x-b)/a)$. Thus, the rigorous way to define a random distributional $1$-form $Y$ that satisfies \eqref{yfromx} is to define, for each $f\in \ma(\R^d)$, $Y(f):= a^{-d} X(\tau_{a,b} f)$. However, this is unnecessarily complicated and opaque; it makes much more sense to just go with the intuitive formula \eqref{yfromx}, as we do in the following lemma. The proof is in Subsection \ref{scalinglmmproof}. 
\begin{lmm}\label{scalinglmm}
Let $X$ be a Euclidean Proca field with parameter $\lambda$. Take any $a > 0$, $b\in \R^d$, and let $Y(x) := a^{(d-2)/2} X(ax + b)$, in the sense defined above. Then $Y$ is a Euclidean Proca field with parameter $a^2\lambda$.
\end{lmm}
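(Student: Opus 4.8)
The plan is to reduce the whole statement to a single variance computation. By the computation preceding the statement, the random distributional $1$-form obtained from $X$ via the substitution $x\mapsto ax+b$ pairs with a test form $f$ as $a^{-d}X(\tau_{a,b}f)$, where $\tau_{a,b}f(x) = f((x-b)/a)$. Hence the extra prefactor $a^{(d-2)/2}$ in the definition of $Y$ gives
\[
Y(f) = a^{(d-2)/2}\cdot a^{-d}\,X(\tau_{a,b}f) = a^{-(d+2)/2}\,X(\tau_{a,b}f)
\]
for every $f\in\ma(\R^d)$. Since $\tau_{a,b}$ is linear and maps $\ma(\R^d)$ into itself (precomposition with an invertible affine map preserves the Schwartz class), $Y$ is again a random distributional $1$-form, and each $Y(f)$ is a fixed scalar multiple of a centered Gaussian, hence itself centered Gaussian. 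Thus the only thing to verify is that $\var(Y(f)) = (f, R_{a^2\lambda}f)$, i.e. that $a^{-(d+2)}\,(\tau_{a,b}f,\, R_\lambda\,\tau_{a,b}f) = (f,\, R_{a^2\lambda}f)$.

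The crux is the scaling behavior of the kernel $K_\lambda$. Substituting $t = a^2 s$ in the integral \eqref{klambdadef} yields the identity
\[
K_\lambda(ax) = a^{2-d}\,K_{a^2\lambda}(x),
\]
which expresses the intuition that dilating space by $a$ turns the mass parameter $\lambda$ into $a^2\lambda$. From this I would deduce the operator-level statement $K_\lambda(\tau_{a,b}f) = a^2\,\tau_{a,b}(K_{a^2\lambda}f)$ by a change of variables in the convolution $K_\lambda g(x) = \int_{\R^d} K_\lambda(x-y)g(y)\,dy$ with $g=\tau_{a,b}f$. Combining this with the elementary commutation relation $\partial_i\,\tau_{a,b} = a^{-1}\,\tau_{a,b}\,\partial_i$, the two second-derivative factors in each term of \eqref{rdef} contribute a factor $a^{-2}$, which exactly converts the prefactor $\lambda^{-1}$ into $a^2(a^2\lambda)^{-1} = \lambda^{-1}$. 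Comparing the componentwise expression \eqref{rdef} at parameter $\lambda$ with the same expression at parameter $a^2\lambda$ then yields the clean operator identity $R_\lambda(\tau_{a,b}f) = a^2\,\tau_{a,b}(R_{a^2\lambda}f)$.

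Finally I would plug this into the inner product. Writing $(\tau_{a,b}f, R_\lambda\tau_{a,b}f) = a^2\int_{\R^d}\tau_{a,b}f(x)\cdot\tau_{a,b}(R_{a^2\lambda}f)(x)\,dx$ and substituting $x = az+b$ (Jacobian $a^d$) collapses both copies of $\tau_{a,b}$ and produces $a^{d+2}(f, R_{a^2\lambda}f)$. Multiplying by the factor $a^{-(d+2)}$ from the variance gives exactly $(f, R_{a^2\lambda}f)$, which is precisely the variance functional of Definition \ref{mmffdef} for the parameter $a^2\lambda$, completing the proof. None of the steps presents a genuine obstacle: the only real content is the scaling relation $K_\lambda(ax) = a^{2-d}K_{a^2\lambda}(x)$, and the remaining difficulty is purely bookkeeping, namely keeping track of the powers of $a$ arising from the three sources (the prefactor, the Jacobian, and the derivatives) and checking that they conspire to cancel.
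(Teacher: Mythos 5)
Your proposal is correct and follows essentially the same route as the paper: both reduce the statement to verifying $\var(Y(f)) = (f, R_{a^2\lambda}f)$ via the operator identity $R_\lambda \tau_{a,b} = a^2 \tau_{a,b} R_{a^2\lambda}$, the commutation $\partial_i\tau_{a,b} = a^{-1}\tau_{a,b}\partial_i$, and the Jacobian identity $(\tau_{a,b}u,\tau_{a,b}v) = a^d(u,v)$. The only (harmless) divergence is in how the key relation $K_\lambda\tau_{a,b} = a^2\tau_{a,b}K_{a^2\lambda}$ is obtained: you derive it directly from the kernel scaling $K_\lambda(ax) = a^{2-d}K_{a^2\lambda}(x)$ via the explicit integral \eqref{klambdadef} and a change of variables in the convolution, whereas the paper inverts the identity $(-\Delta+\lambda I)\tau_{a,b} = a^{-2}\tau_{a,b}(-\Delta+a^2\lambda I)$ using $K_\lambda^{-1} = -\Delta+\lambda I$ from Lemma~\ref{klemma} --- both are valid one-step arguments.
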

Thus, upon scaling and translation, a Proca field just becomes a Proca field with a different parameter.

\subsection{The mass of the Euclidean Proca field}
A theory is called {\it massive} if it has exponential decay of correlations. The decay exponent is called the mass of the theory. The following lemma, proved in Subsection \ref{masslmmproof}, identifies the asymptotic form of the correlation in the Euclidean Proca field. In particular, it shows that the mass of the Euclidean Proca field with parameter $\lambda$ is equal to $\sqrt{\lambda}$. 

In the following, we say that a map $f:\R^d \to \R$ is a {\it bump function} if it is infinitely differentiable, and zero outside a compact region. For a bump function $f$ and a vector $x\in \R^d$, let $f^x$ denote the translate of $f$ by $x$, that is, $f^x(y) := f(x+y)$. We will say that $f\in \ma(\R^d)$ is a bump function if each component of $f$ is a bump function. For two bump functions $f,g\in \ms(\R^d)$, a vector $u\in S^{d-1}$, and a number $\lambda >0$, we define
\begin{align*}
\Psi(f,g,u,\lambda):=  \iint f(y)g(y-v) e^{-\sqrt{\lambda}u\cdot v} dv dy,
\end{align*}
where the integrals are over $\R^d$. The following lemma is proved in  Subsection \ref{masslmmproof}.
\begin{lmm}\label{masslmm}
Let $X$ be a Euclidean Proca field with parameter $\lambda$. Let $\{x_n\}_{n\ge 1}$ be a sequence in $\R^d$ such that $\|x_n\|\to \infty$ and $\|x_n\|^{-1}x_n \to u\in S^{d-1}$. Then for any two bump functions $f,g\in \ma(\R^d)$,
\begin{align*}
&\lim_{n\to\infty} \frac{\E(X(f) X(g^{x_n})) }{\|x_n\|^{-(d-1)/2} e^{-\sqrt{\lambda}\|x_n\|}} \\
&=  \frac{\lambda^{(d-3)/4}}{2(2\pi)^{(d-1)/2}}\biggl(\sum_{i=1}^d \Psi(f_i, g_i, u, \lambda) + \lambda^{-1}\sum_{i,j=1}^d \Psi(\partial_i f_i, \partial_j g_j, u, \lambda)\biggr). 
\end{align*}
\end{lmm}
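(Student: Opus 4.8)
The plan is to start from the covariance structure of the field. Since $X$ is a centered Gaussian family that depends linearly on its argument, Definition~\ref{mmffdef} together with polarization gives $\E(X(f)X(g)) = \tfrac12[(f,R_\lambda g)+(g,R_\lambda f)]$; because the convolution operator $K_\lambda$ is symmetric and each $\partial_i\partial_j$ commutes with it, $R_\lambda$ is formally self-adjoint, so this equals $(f,R_\lambda g)$. Expanding via \eqref{rdef} yields
\[
\E(X(f)X(g^x)) = \sum_{i=1}^d (f_i, K_\lambda g_i^x) - \lambda^{-1}\sum_{i,j=1}^d (f_i, \partial_i\partial_j K_\lambda g_j^x),
\]
so the whole problem reduces to the behaviour of these two families of pairings as $\|x\|\to\infty$.

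For the first family I would write out the convolution explicitly: substituting $g_i^x(z)=g_i(x+z)$ and shifting the integration variable gives $(f_i, K_\lambda g_i^x) = \iint f_i(y)\,g_i(z)\,K_\lambda(x+y-z)\,dz\,dy$. For the second family I would integrate by parts and use that $K_\lambda$ commutes with each $\partial_j$ (Lemma~\ref{klemma}), which yields $(f_i, \partial_i\partial_j K_\lambda g_j^x) = -(\partial_i f_i, K_\lambda (\partial_j g_j)^x)$. Hence the second family has exactly the same shape as the first, with $f_i,g_i$ replaced by $\partial_i f_i,\partial_j g_j$, and the sign flip converts the $-\lambda^{-1}$ prefactor into $+\lambda^{-1}$. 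Thus everything is reduced to the far-field behaviour of a single integral $\iint f(y)\,g(z)\,K_\lambda(x+y-z)\,dz\,dy$.

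The heart of the proof, and the step I expect to be the main obstacle, is the precise large-distance asymptotic of $K_\lambda$ itself. I would prove that, as $\|x\|\to\infty$,
\[
K_\lambda(x) = \frac{\lambda^{(d-3)/4}}{2(2\pi)^{(d-1)/2}}\,\frac{e^{-\sqrt{\lambda}\|x\|}}{\|x\|^{(d-1)/2}}\,\bigl(1+o(1)\bigr).
\]
This is a Laplace/saddle-point analysis of the defining integral \eqref{klambdadef}: writing $r=\|x\|$ and $\phi(t)=r^2/(4t)+\lambda t$, the phase is minimized at $t_*=r/(2\sqrt\lambda)$ with $\phi(t_*)=\sqrt\lambda\,r$ and $\phi''(t_*)=4\lambda^{3/2}/r$, and the standard expansion about $t_*$ produces exactly the stated constant together with the powers of $r$ and $\lambda$. (Equivalently one may identify $K_\lambda$ with a modified Bessel function and quote its classical asymptotics.) What requires real care is \emph{uniformity}: I need this expansion to hold uniformly when the argument is perturbed within a bounded set, since below I evaluate $K_\lambda$ at $x_n+a$ with $a$ ranging over the compact set $\supp f-\supp g$.

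Finally I would assemble the pieces. For $a$ in a fixed compact set, $\|x_n+a\| = \|x_n\| + u_n\cdot a + O(\|x_n\|^{-1})$ with $u_n=x_n/\|x_n\|\to u$, so the kernel asymptotic gives, uniformly in $a$,
\[
K_\lambda(x_n+a) = \frac{\lambda^{(d-3)/4}}{2(2\pi)^{(d-1)/2}}\,\frac{e^{-\sqrt\lambda\|x_n\|}}{\|x_n\|^{(d-1)/2}}\,e^{-\sqrt\lambda\,u\cdot a}\,\bigl(1+o(1)\bigr).
\]
Applying this with $a=y-z$ inside $\iint f_i(y)g_i(z)K_\lambda(x_n+y-z)\,dz\,dy$, dividing by the common factor $\|x_n\|^{-(d-1)/2}e^{-\sqrt\lambda\|x_n\|}$, and passing the limit under the integral by dominated convergence (legitimate since $f,g$ are bump functions, so the domain is compact and the error is uniform), the substitution $v=y-z$ turns the surviving integral into precisely $\Psi(f_i,g_i,u,\lambda)$. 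Carrying out the identical computation for the second family with $\partial_i f_i,\partial_j g_j$ and summing over $i,j$ produces the claimed formula.
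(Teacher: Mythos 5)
Your proposal is correct and follows essentially the same route as the paper: polarization plus self-adjointness of $R_\lambda$ to reduce the covariance to $(f,R_\lambda g^x)$, integration by parts with the commutation $\partial_j K_\lambda = K_\lambda \partial_j$ to convert the $-\lambda^{-1}$ terms into $+\lambda^{-1}(\partial_i f_i, K_\lambda(\partial_j g_j)^x)$, and then the sharp far-field asymptotic of $K_\lambda$ (the paper proves it by completing the square and a change of variables, which is exactly your saddle-point computation at $t_*=\|x\|/(2\sqrt\lambda)$ made rigorous) combined with the uniform two-sided ratio bounds on compact translates to justify dominated convergence. The uniformity concern you flag is precisely what the paper handles via its inequalities bounding $K_\lambda(v+x_n)/K_\lambda(x_n)$ for $v$ in a compact set, so nothing is missing.
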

The main consequence of the lemma is that the correlation between $X(f)$ and $X(g^x)$ behaves like a constant times $\|x\|^{-(d-1)/2} e^{-\sqrt{\lambda}\|x\|}$ when $\|x\|$ is large, where the constant depends on $f$, $g$, $\lambda$ and the unit vector $\|x\|^{-1} x$. In particular, the Euclidean Proca field with parameter $\lambda$ has mass $\sqrt{\lambda}$.

\section{Main results}
We are now ready to state the two main results of the paper. The first result establishes the limit of $\uone$ Yang--Mills--Higgs theory under a certain scaling of the gauge coupling constant and the Higgs length. The second result gives the analogous statement for $\sutwo$ theory.

\subsection{A scaling limit of $\uone$ Yang--Mills--Higgs theory}\label{scale1}
Recall the gauge field $V$ defined in Subsection \ref{uonesec}, obtained by unitary gauge fixing of the original theory, but in infinite volume. That is, consider any infinite volume Gibbs measure obtained by taking a weak limit of the models on $\Lambda$ under periodic boundary condition, as $L\to \infty$. The result stated below does not depend on the choice of the infinite volume measure, as long as it is a weak limit of finite volume measures under periodic boundary conditions. 

For each edge $e$ in $\Z^d$, $V_e$ is an element of $\uone$, which can be viewed as the unit circle $S^1\subseteq \R^2$. We obtain a projection of $V_e$ on the real line as follows. For each $n$, consider the following variant of stereographic projection of the unit sphere $S^{n-1}$ in  $\R^n$. Let $e_1,\ldots,e_n$ denote the standard basis vectors of $\R^n$. For each $x\in S^{n-1}\setminus \{-e_1\}$, take the line passing through $x$ and $-e_1$, and let $y = (y_1,\ldots,y_n)$ be the unique point at the intersection of this line and the plane $x_1=1$. (That is, $y$ is the stereographic projection of $x$ on the plane $x_1=1$;  see Figure \ref{stereo}.) Let $\sigma_{n-1}(x) := (y_2,\ldots,y_n)\in \R^{n-1}$. Note that $\sigma_{n-1}$ defines a bijection between $S^{n-1}\setminus\{-e_1\}$ and $\R^{n-1}$. 

\begin{figure}
\begin{center}
\begin{tikzpicture}[scale = 2.5]
\draw (-1.5,0) to (1.5,0);
\draw (0,-1.5) to (0,1.5);
\draw (0,0) circle [radius=1];
\draw (1,-1.5) to (1,1.5);
\draw (-1,0) to (1,1);
\node at (.2, -.15) {$(0,0)$};
\draw [fill] (0,0) circle [radius = 0.02];
\draw [fill] (-1,0) circle [radius = 0.02];
\node at (1.2, -.15) {$(1,0)$};
\node at (-1.25, -.15) {$(-1,0)$};
\draw [fill] (1,0) circle [radius = 0.02];
\draw [fill] (1,1) circle [radius = 0.02];
\draw [fill] (.6,.8) circle [radius = 0.02];
\node at (.6, .7) {$x$};
\node at (1.5, 1) {$y = (1,\sigma_1(x))$};
\end{tikzpicture}
\caption{Stereographic projection $\sigma_1:S^1 \to \R$.\label{stereo}}
\end{center}
\end{figure}
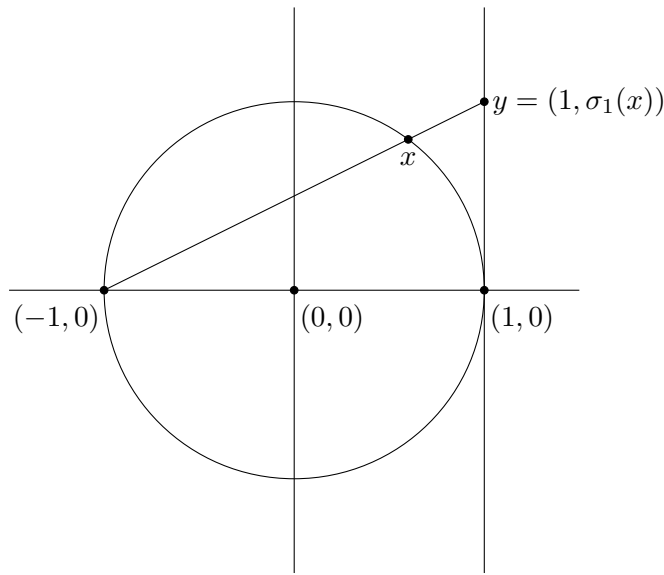

For each $e$, define
\[
A_e :=\frac{1}{g}\sigma_1(V_e). 
\]
Although $\sigma_1$ is undefined at $-e_1$, it is easy to see that the probability that $V_e$ is equal to $-e_1$ is zero. Thus, with probability one, $A_e$ is well-defined for all $e\in E$. 

Given the field $A$, we define a random $1$-form $Y$ as follows. For each $x\in \Z^d$ and each $y$ in the interior of the Voronoi cell of $x$ (i.e., the set of points $z\in \R^d$ such that $x$ is closer to $z$ than any other element of $\Z^d$), and for each $1\le i\le d$, define 
\begin{align*}
Y_i(y):= 
A_{(x, x + e_i)}. 
\end{align*}
On the boundaries of the Voronoi cells, define $Y_i$ in any arbitrary manner. The following theorem identifies the scaling limit of the random $1$-form $Y$ as $g\to 0$ and $\alpha \to \infty$ obeying certain constraints.
\begin{thm}\label{uonethm}
Take any $d\ge 2$ and let $Y$ be the random $1$-form defined above. Define a rescaled version of $Y$ on the lattice $\ve\Z^d$ as 
\[
Z(x) := \ve^{-(d-2)/2} Y(\ve^{-1}x).
\]
Suppose that as $\ve \to 0$, we simultaneously send $\alpha\to \infty$ and $g\to 0$ such that $\alpha g$ remains equal to $c\ve$ for some fixed constant $c$. Further, suppose that $g= O(\ve^{50 d})$ as $\ve \to 0$. Then $Z$ converges in law to the Euclidean Proca field with parameter $c^2$.
\end{thm}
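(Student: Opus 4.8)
The plan is to turn the gauge-fixed theory into an explicit, nearly Gaussian field on $\R^E$, to identify its Gaussian part with a lattice Proca field whose scaling limit is already available from Lemma~\ref{mmfflemma}, and then to show that the non-Gaussian corrections wash out in the limit.

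First I would record the gauge-fixed law. After unitary gauge fixing (Subsection~\ref{uonesec}) the field $V$ is distributed according to the Gibbs measure with action $S(V) = \frac{1}{g^2}\sum_{p}\Re(V_p) + \alpha^2\sum_{e}\Re(V_e)$, since plaquette products are gauge invariant and the Higgs term collapses to $\alpha^2\sum_e\Re(V_e)$. Writing $V_e = e^{\I\theta_e}$ gives $\Re(V_e)=\cos\theta_e$ and $\Re(V_p)=\cos\theta_p$, where $\theta_p$ is the signed sum of the $\theta_e$ around $p$ (the discrete curl). The stereographic map satisfies $\sigma_1(e^{\I\theta})=2\tan(\theta/2)$, so $A_e = \sigma_1(V_e)/g$ corresponds to $\theta_e = 2\arctan(gA_e/2)$, under which Haar measure becomes proportional to $\prod_e(1+(gA_e/2)^2)^{-1}\,dA_e$ and, via $\cos(2\arctan u)=(1-u^2)/(1+u^2)$, the action becomes an explicit function of $A$ on $\R^E$.

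Next I would Taylor expand both cosines to second order. Using $\alpha g=c\ve$, the quadratic part of the action in the $A$ variables is $-\tfrac12\|dA\|^2-\tfrac{(c\ve)^2}{2}\|A\|^2$, the lattice Proca action of mass $c\ve$. Let $\nu_0$ denote the centered Gaussian on $\R^E$ with inverse covariance $d^*d+(c\ve)^2$. I would reuse (or re-derive) the Fourier-analytic computation underlying Lemma~\ref{mmfflemma}, checking that the lattice Proca propagator's symbol, after rescaling the lattice by $\ve$ and the field by $\ve^{-(d-2)/2}$, converges to the symbol of $R_\lambda$ with $\lambda=c^2$, so that under $\nu_0$ the rescaled $1$-form $Z$ converges in law to the Euclidean Proca field of mass $c$. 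This reduces the theorem to showing that the true law $\mu$ and the reference Gaussian $\nu_0$ assign asymptotically equal distributions to $Z(f)$.

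The comparison is the crux, and I would carry it out through a concentration argument. Because $\alpha=c\ve/g$ and $g=O(\ve^{50d})$, the Higgs mass term is enormous and pins each $\theta_e$ to size $O(1/\alpha)$, so $|gA_e|$ is typically $O(g/\ve)$, super-polynomially small. I would establish an event $\mathcal{G}$, of probability tending to $1$ under both $\mu$ and $\nu_0$, on which $\max_e|gA_e|$ over the $O(\ve^{-d})$ edges meeting the (rescaled) support of $f$ lies below a threshold such as $\ve^{10d}$; on $\mathcal{G}$ the log-density $\log(d\mu/d\nu_0)$---the quartic and higher remainders of the two cosines plus the log-Jacobian---is bounded deterministically by (per-site error)$\times$(number of sites)$=O(g^2\ve^{-d-2})\to 0$. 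Together with tightness of $Z(f)$, this forces the characteristic functions of $Z(f)$ under $\mu$ and $\nu_0$ to share a limit. The hard part is precisely this step: one must bound the aggregate of $\sim\ve^{-d}$ small corrections \emph{simultaneously}, prove $\P(\mathcal{G})\to1$ despite the growing edge count (via a union bound fed by single-edge and single-plaquette tail estimates that exploit the huge $\alpha$), and ensure all such bounds are uniform in $L$ so they survive the weak limit $L\to\infty$ defining the infinite-volume Gibbs measure. The drastic hypothesis $g=O(\ve^{50d})$ is exactly what provides the room to make every error term super-polynomially small.
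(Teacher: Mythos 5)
Your setup matches the paper's: the gauge-fixed density you write down is Lemma \ref{vdens1}, the stereographic change of variables with its Jacobian is Lemma \ref{approxdens2}, and the identification of the quadratic part with a lattice Proca action of mass $c\ve$, whose continuum limit is then invoked, is exactly how the paper proceeds (though Theorem \ref{convthm} is proved there by direct covariance-matrix estimates rather than Fourier analysis --- an inessential difference). The instinct that everything hinges on a smallness event plus a density-ratio bound is also the paper's. But the two steps you flag as ``the hard part'' are genuinely missing rather than merely unwritten. First, you give no mechanism for proving $\P(\mathcal{G})\to 1$ under the interacting measure $\mu$, uniformly in $L$. ``Single-edge tail estimates that exploit the huge $\alpha$'' are not available off the shelf: the edges are coupled through the plaquette term, and the a priori moment bound $\E|1-V_e|^2 \le C\alpha^{-4}g^{-2}+C\alpha^{-2}\log\alpha$ is precisely what must be manufactured before any union bound can start. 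The paper produces it by a partition-function argument (Lemma \ref{keylmm2}): lower-bound $\int p(U)\,dU$ by the Haar volume of the set where $|1-U_e|\le \alpha^{-1}$ for every edge, observe that $\E(p(V)^{-1})=(\int p)^{-1}$, convert this into a tail bound on the full Hamiltonian $H(V)$ by Markov's inequality, and use periodic-boundary symmetry to distribute the bound on $\E H(V)$ over the edges with no $L$-dependence. Without this (or an equivalent) idea, your event $\mathcal{G}$ has no proof.

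Second, comparing $\mu$ with the Gaussian $\nu_0$ via the log-density ratio restricted to ``the $O(\ve^{-d})$ edges meeting the support of $f$'' does not parse: $d\mu/d\nu_0$ involves every edge, the aggregate correction diverges as $L\to\infty$ (in infinite volume the two fields are presumably mutually singular), and the marginal of $\mu$ on a window is \emph{not} obtained by restricting the global density ratio --- integrating out the exterior contributes. What is actually needed is the paper's three-stage localization: condition on the smallness event $\me$ on a box $\Lambda'$ of side $M\sim g^{-4\kappa}$ and show this barely perturbs the law (Lemma \ref{tvlmm12}); compare, in total variation, the conditional law given boundary data with the discrete Proca field with the \emph{same} boundary data (Lemma \ref{convlmm2}); and then show via the covariance decay of Lemma \ref{corrlmm} that the boundary-conditioned Proca field forgets its boundary values once $M\ve^3\to\infty$ (Lemma \ref{normallmm}). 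A quantitative slip compounds the problem: on your event $\max_e|gA_e|\le\ve^{10d}$, the per-edge correction $\alpha^2\,O(\theta_e^4)=O\bigl(\ve^2 g^{-2}(gA_e)^4\bigr)$ is only bounded by $\ve^{2+40d}g^{-2}$, which blows up because $g=O(\ve^{50d})$ is an \emph{upper} bound on $g$, leaving $g^{-2}$ uncontrolled; the truncation must instead track the field's actual scale in powers of $g$, as in the paper's choices $\delta=g^{1-6d\kappa}$, $\delta_0=g^{1-(2d+1)\kappa}$ with $\kappa<1/(49d)$. In short, your architecture is the right one, but the two load-bearing ingredients --- the $L$-uniform a priori estimate and the conditional/boundary localization --- are absent.
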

One remarkable aspect of the above result is that the scaling limit only demands that $\alpha g$ be held equal to a constant times the lattice spacing $\ve$. If that is true, and $g$ goes to zero at least as fast as $\ve^{50d}$, then no matter how fast we take $g$ to zero (and therefore, $\alpha$ to infinity), we end up getting the same scaling limit. It seems, therefore, that the behavior of the product $\alpha g$ is what determines the nature of the scaling limit in this model. We ponder on this further in Subsection \ref{opensec}.

\subsection{A scaling limit of $\sutwo$ Yang--Mills--Higgs theory}\label{scale2}
Recall that any element of $\sutwo$ has the form
\[
\begin{pmatrix}
a & b \\
-\bar{b} & \bar{a}
\end{pmatrix},
\]
where $a,b\in \C$ satisfy $|a|^2 + |b|^2 = 1$. Writing $a = x + \I y$ and $b = w+\I z$, we see that any element of $\sutwo$ corresponds to a $4$-tuple of real numbers $(x,y,w,z)$ such that $x^2 + y^2+w^2+z^2 = 1$.  This gives a bijection $\tau: \sutwo \to S^3$. It is not hard to show that the pushforward of the normalized Haar measure under $\tau$ is the uniform probability measure on $S^3$, and conversely, the pushforward of the uniform probability measure on $S^3$ under $\tau^{-1}$ is the normalized Haar measure on $\sutwo$. 

Now recall the gauge field $V$ defined in Subsection \ref{sutwosec}, obtained by unitary gauge fixing of the original theory, but in some infinite volume limit as we did for $\uone$ theory in Subsection \ref{scale1}. Recall the stereographic projection map $\sigma_n: S^n \to \R^n$ defined in Subsection \ref{scale1}. For each edge $e\in E$, define
\[
A_e := \frac{\sqrt{2}}{g} \sigma_3(\tau(V_e)). 
\]
With probability one, $A_e = (A_e^1, A_e^2, A_e^3)$ is a well-defined $\R^3$-valued random vector. Given the field $A$, we define a $3$-tuple of random $1$-forms $Y = (Y^1, Y^2, Y^3)$ as follows. For each $x\in \Z^d$ and each $y$ in the interior of the Voronoi cell of $x$, and for each $1\le i\le d$ and $1\le j\le 3$, define 
\begin{align}\label{yfroma}
Y^j_i(y):= 
A^j_{(x, x + e_i)}. 
\end{align}
On the boundaries of the Voronoi cells, define $Y_i^j$ in any arbitrary manner. The following theorem identifies the scaling limit of $Y$ as $g\to 0$ and $\alpha \to \infty$ under certain constraints.

\begin{thm}\label{sutwothm}
Take any $d\ge 2$ and let $Y$ be the $3$-tuple of random $1$-forms defined above. Define a rescaled version of $Y$ on the lattice $\ve\Z^d$ as 
\begin{align}\label{zfromy}
Z(x) := \ve^{-(d-2)/2} Y(\ve^{-1}x).
\end{align}
Suppose that as $\ve \to 0$, we simultaneously send $\alpha\to \infty$ and $g\to 0$ such that $\alpha g$ remains equal to $c\ve$ for some fixed constant $c$. Further, suppose that $g= O(\ve^{50 d})$ as $\ve \to 0$. Then $Z$ converges in law to a $3$-tuple of independent Euclidean Proca fields with parameter $c^2/2$. 
\end{thm}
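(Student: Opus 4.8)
The plan is to follow the same route as for the $\uone$ theorem, exploiting the fact that near the identity the group $\sutwo$ linearises to its Lie algebra $\mathfrak{su}(2)\cong\R^3$, and that the three Lie-algebra directions decouple at quadratic order — this is what produces three \emph{independent} limiting fields. First I would fix a unitary gauge and reduce the problem to the gauge-fixed Gibbs measure on $V\in\sutwo^E$ with density proportional to $\exp(S(V))$, where after setting $\psi_x=e_1$ the action becomes
\[
S(V)=\frac{1}{g^2}\sum_{p\in P}\Re(\tr(V_p)) + \alpha^2\sum_{e=(x,y)\in E}\Re((V_e)_{11}),
\]
since $e_1^*V_e e_1=(V_e)_{11}$. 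The change of variables is $A_e=\frac{\sqrt2}{g}\sigma_3(\tau(V_e))$; writing $\tau(V_e)=(x_e,y_e,w_e,z_e)\in S^3$ and $B_e:=(y_e,w_e,z_e)$, one has $\sigma_3(\tau(V_e))=\frac{2}{1+x_e}B_e$, so near the identity $A_e\approx\frac{\sqrt2}{g}B_e$, and $B_e$ is, up to a fixed permutation of coordinates, the Lie-algebra coordinate $\Theta_e$ of $V_e=\exp(\I\,\Theta_e\cdot\vec\sigma)$.

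The key steps are then as follows. \emph{(i) Concentration.} Using both the Higgs term (which confines each $V_e$ towards the identity) and the plaquette term, show that with overwhelming probability every $V_e$ is within $O(g)$ of the identity, so that each $A_e=O(1)$ and the linearisation below is valid; this is where the hypothesis $g=O(\ve^{50d})$ is consumed, to make the exceptional probabilities and the accumulated error terms negligible after summing over the $O(\ve^{-d})$ relevant edges. \emph{(ii) Effective action.} Taylor-expand $S$ to second order in $A$. Using $\Re(\tr(V_p))=2\cos|\Theta_p|=2-|\Theta_p|^2+O(|\Theta_p|^4)$ with $\Theta_p$ the discrete curvature, and $\Re((V_e)_{11})=x_e=1-\tfrac12|B_e|^2+O(|B_e|^4)$, together with the leading (abelian) approximation $\Theta_p\approx(\mathrm dB)_p$ to the curvature, gives the effective Gaussian action
\[
S(V)=\textup{const}-\frac12\sum_{p}|(\mathrm dA)_p|^2-\frac{c^2\ve^2}{4}\sum_{e}|A_e|^2+(\textup{higher order}),
\]
after using $\alpha g=c\ve$ and $B_e=\frac{g}{\sqrt2}A_e$; here $\mathrm d$ is the discrete exterior derivative applied componentwise to the three real components of $A$, and the Haar-measure Jacobian contributes only to the lower-order remainder. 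Crucially the quadratic part is diagonal in the three components. \emph{(iii) Gaussian limit.} For the reference measure with this quadratic action each component is an independent discrete Gaussian $1$-form whose covariance is $(\mathrm d^*\mathrm d+\tfrac12 c^2\ve^2 I)^{-1}$, the natural lattice discretisation of $Q_\lambda^{-1}=R_\lambda$ with $\lambda=\tfrac12 c^2\ve^2$ (note $Q_\lambda=\mathrm d^*\mathrm d+\lambda$ from \eqref{qdef}). Under the rescaling $Z(x)=\ve^{-(d-2)/2}Y(\ve^{-1}x)$ this converges to the Euclidean Proca field, by the construction in the proof of Lemma \ref{mmfflemma} and the scaling relation of Lemma \ref{scalinglmm}; matching the mass-term/kinetic-term ratio identifies the continuum parameter as $\lambda=c^2/2$, i.e.\ mass $c/\sqrt2$, for each of the three fields.

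The hard part will be making steps (ii)–(iii) rigorous: controlling the non-Gaussian corrections and showing that they do not affect the limit of $Z(f)$ for a fixed test $1$-form $f$. Two features make this delicate. The plaquette term couples edges, so the gauge-fixed measure is neither a product nor exactly Gaussian, and one must show that replacing the true measure by its Gaussian reference changes $\E(e^{\I t Z(f)})$ by $o(1)$; I would do this through quantitative moment or cumulant bounds, again leaning on the strong concentration from (i). More importantly, the \emph{non-abelian} corrections to the curvature, $\Theta_p=(\mathrm dB)_p+\tfrac12\sum[\Theta_{e_i},\Theta_{e_j}]+\cdots$, mix the three Lie-algebra components and are precisely what could destroy the claimed independence; these commutator terms are of order $g$ relative to the abelian part, so one must verify that their contribution to both the variance of $Z^j(f^j)$ and the cross-covariances $\E(Z^j(f^j)Z^k(f^k))$ for $j\ne k$ vanishes as $\ve\to0$. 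Establishing that these cross terms disappear in the limit — so that the three components are asymptotically independent and each converges to a Proca field of mass $c/\sqrt2$ — is the crux that distinguishes the $\sutwo$ case from the abelian $\uone$ case.
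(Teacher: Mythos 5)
Your outline follows the paper's strategy at every structural joint: unitary gauge fixing (Lemma \ref{vdens2}), stereographic coordinates $A_e=\sqrt{2}g^{-1}\sigma_3(\tau(V_e))$ with the Haar Jacobian pushed into the remainder (Lemma \ref{approxdens}), a quadratic effective action with mass coefficient $\alpha^2g^2/4=c^2\ve^2/4$, comparison with a discrete Gaussian reference, and the discrete-to-continuum convergence of Theorem \ref{convthm} combined with Lemma \ref{scalinglmm} to identify the mass $c/\sqrt{2}$. But there is a genuine gap, in two parts. First, your concentration claim in step (i) is quantitatively false: it is not true that $\|V_e-I\|=O(g)$ with overwhelming probability, i.e.\ that $A_e=O(1)$. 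The limit you are trying to prove already contradicts this: the discrete Proca field with mass parameter of order $\ve$ has per-edge variance of order $\ve^{-2}$ (see the bound $\var(W^i(e))=O(\ve^{-2})$ in the proof of Lemma \ref{convlmm}), so $\|A_e\|$ is typically of order $\ve^{-1}\to\infty$ and $\|V_e-I\|$ of order $g/\ve=c/\alpha\gg g$; any event forcing $A_e=O(1)$ would have vanishing probability under the limiting law. The correct statement is concentration at an intermediate scale $\delta$ with $g\ll\delta\ll 1$ (the paper takes $\delta=g^{1-a\kappa}$ with $a=6d$), and the Taylor errors are controlled not because $A_e$ is bounded but because $g^2(g^{-1}\delta)^{16}M^d\to 0$, which is exactly where the hypothesis $g=O(\ve^{50d})$ and the parameter bookkeeping $(\delta,\delta_0,M,\kappa)$ of Subsection \ref{sutwoproof} are consumed --- your sketch, as stated, has no room for this.

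Second, and more fundamentally, you assert the concentration (``using both the Higgs term \dots\ and the plaquette term, show that \dots'') without any mechanism, and the mechanism is the one genuinely new estimate in the paper. Lemma \ref{keylmm} proves $\E\|I-V_e\|^2\le C\alpha^{-4}g^{-2}+C\alpha^{-2}\log\alpha$ \emph{uniformly in $L$}, via a thermodynamic argument: lower-bound the partition function by the Haar volume of $\{\|U_e-I\|\le\alpha^{-1}\ \forall e\}$, use the identity $\E(p(V)^{-1})=(\int p\,\prod dU_e)^{-1}$, apply Markov's inequality to $H(V)$, and spread $\E(H(V))$ over edges by the translation symmetry of the periodic boundary condition. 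The $L$-uniformity is what makes the statement for infinite-volume Gibbs measures possible at all, a point your sketch never addresses. Relatedly, your proposed characteristic-function/cumulant comparison is weaker than what the paper does: it conditions on the event $\me$ on a box of side $M$, with a \emph{second}, smaller scale $\delta_0$ on the boundary edges so that condition \eqref{mcond} holds, and bounds the total variation distance between the conditioned law and the discrete Proca field with the same boundary data by a direct density-ratio computation (Lemmas \ref{tvlmm1} and \ref{convlmm}). Since all non-Abelian couplings --- including the commutator corrections to the curvature that you correctly flag as the crux --- sit inside the $O(g^2B_p^{16})$ error in Lemma \ref{approxdens}, the total-variation bound disposes of the cross-covariance and independence issues wholesale, with no separate cumulant estimates; and Lemma \ref{normallmm} (convergence of the discrete Proca field with polynomially bounded boundary data, using $M\ve^3\to\infty$) is then needed to remove the conditioning, another ingredient absent from your outline.
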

Just as in Theorem \ref{uonethm}, notice that here, too, the scaling limit depends on the behavior of the product $\alpha g$ rather than the individual parameters $\alpha$ and $g$.

Notice that in Theorem \ref{sutwothm}, as in Theorem \ref{uonethm}, the mass generation happens only after passing to the scaling limit. Neither of these theorems say anything about exponential decay of correlations {\it before} passing to the scaling limit, i.e., for the models on the lattice.

\subsection{Comparison with prior work}
Scaling limits of $\uone$ theory coupled to a Higgs field have appeared in the prior literature, mostly in the late 70s and early 80s. Some notable contributions are due to \citet*{brydgesetal79}, \citet*{balabanetal84} and \citet*{kennedyking86}, who established mass generation by the Higgs mechanism in various versions of the $\uone$ Yang--Mills--Higgs model on a lattice in $d=2$ and $d=3$. In $d=3$, \citet{balaban82a, balaban82b, balaban83a, balaban83b} proved ultraviolet stability of $\uone$ theory coupled to a Higgs field, which is the first step towards the construction of a continuum limit. For a survey of these and other results, see \citet*{borgsnill87}. The closest cousins of Theorem~\ref{uonethm} in the literature are the results of \citet{gross83} and \citet{driver87}, who proved convergence of the curvature forms of pure $\uone$ theory (i.e., without a Higgs field) to massless versions of the Euclidean Proca field in $d=3$ and $d=4$, respectively.  

In a different setup, \citet{dimock18, dimock20} has proved ultraviolet stability of $\uone$ theory coupled to fermions in $d=3$, using Balaban's approach to renormalization. This is the theory of quantum electrodynamics in $d=3$.  A key difference between  these works and Theorem~\ref{uonethm} is that it is the first result that obtains a massive field in the scaling limit, in a dimension greater than two. All of the above results are either in $d=2$, or obtain a massless field in $d\ge 3$. 

Similarly, Theorem \ref{sutwothm} is the first construction of a massive scaling limit of a non-Abelian lattice Yang--Mills theory in dimensions bigger than two. 
For prior progress on constructions of scaling limits of non-Abelian Yang--Mills theories in $d\ge 3$, see the works of \citet{balaban85, balaban89} and \citet*{magnenetal93}. There has been a resurgence of interest in this topic in recent years, with contributions from the probability community. An approach to constructing Euclidean Yang--Mills--Higgs theories via rigorous stochastic quantization was initiated by \citet{shen21} and \citet*{chandraetal22, chandraetal22b}. For a survey of this rapidly evolving area, see~\cite{chevyrev22}. For a different approach based on regularization by Yang--Mills heat flows, see~\citet{gross22} and \citet{caochatterjee21, caochatterjee23}. None of these approaches have yet reached their ultimate goal of constructing non-Abelian Euclidean Yang--Mills theories in  $d\ge 3$, but the case looks promising.

In $d=2$, on the other hand, we now have a fairly complete understanding of scaling limits of non-Abelian Yang--Mills theories, due to the contributions of many authors, such as~\citet{driver89}, \citet*{grossetal89}, \citet{fine91}, \citet{sengupta97}, \citet{levy03} and \citet{chevyrev19}. More recently, \citet{chevyrevshen23} have characterized the 2D Yang--Mills measure as the unique invariant measure of the Langevin dynamic; this   is important for universality of scaling limits. 

However, the construction of Yang--Mills--Higgs theories in the continuum remains open even in 2D. The most important recent progress in this direction is the work of \citet{bringmanncao24}, which proves global well-posedness of the Abelian Yang--Mills--Higgs heat flow in 2D, essentially giving a dynamical construction of Abelian Yang--Mills--Higgs theory in 2D.

Two recent works that are analogous to Theorem \ref{sutwothm} in that they obtained Gaussian limits of discrete models with non-Abelian symmetries are those of \citet*{aruetal22} and \citet{giulianiott23}. These papers look at the spin $O(N)$ models in $d=2$ and obtain Gaussian scaling limits. Unlike Theorem \ref{sutwothm}, however, the scaling limits obtained in \cite{aruetal22, giulianiott23} are non-massive. The proof sketch provided in the next subsection will show the Higgs mechanism generates the mass in the scaling limits in Theorem \ref{uonethm} and Theorem~\ref{sutwothm}.

Proving mass gap in Yang--Mills theories is a fundamental open question in $d\ge 3$. For pure gauge theories on the lattice (i.e., no Higgs field and no scaling limit), mass gap was rigorously proved at strong coupling (large $g$) by \citet{osterwalderseiler78} using the cluster expansion technique of \citet*{glimmetal76}. Similar arguments were used by \citet{seiler82} to prove mass gap in Yang--Mills--Higgs models on lattices when $\alpha g$ is large enough. An important recent work on mass gap is the paper by \citet*{shenetal23}, which
improves the large $g$ condition in \cite{osterwalderseiler78}. In particular, \cite{shenetal23} allows $g$ to be chosen uniformly in $N$ for $\sun$ theory. 

The problem with holding $\alpha g$ larger than a constant, however, is that the correlation length does not diverge to infinity~\cite{seiler82} and therefore one cannot pass to a continuum limit. In this sense, Theorem \ref{sutwothm} is the first rigorous proof of the validity of the Higgs mechanism in the continuum limit of a Euclidean Yang--Mills theory in a dimension bigger than two. It also gives a partial explanation for the phase diagrams constructed by \citet{fradkinshenker79}, \citet{banksrabinovici79} and many subsequent authors in the physics literature, which suggest that a mass gap can be present even if $\alpha g$ is very small. 

In \citet[Fig.~2]{fradkinshenker79}, for instance, the phase diagram for the $\uone$ model on the lattice (without taking a scaling limit) indicates that correlations decay exponentially when either $g$ is bigger than some threshold $g_0$ (which is already provable rigorously from the results of \cite{osterwalderseiler78}), or if $\alpha^2$ is bigger than some concave function of $1/g^2$ growing slower than linearly. In particular, this phase diagram (if true) implies that the region of exponential decay of correlations contains values of $(\alpha,g)$ such that the product $\alpha g$ is smaller than any given constant. This goes beyond the `strong coupling' region given by \citet{seiler82} where $\alpha g$ is bigger than a fixed constant. It is difficult, however, to compare such phase diagrams with our results, since we obtain exponential decay of correlations only in the continuum scaling limit. But we do take $\alpha g$ to zero while taking this limit, which gives partial support for the predictions from the physics papers.

\subsection{Sketch of the proof}\label{sketchsec}
The following is a sketch of the proof of Theorem \ref{sutwothm}. Throughout the sketch, $C_1,C_2,\ldots$ will denote constants that do not vary with $\ve$ (and in particular, do not depend on $L$, $\alpha$, or $g$). Also, let us keep $L$ fixed in the beginning instead of being sent to infinity. Let $\Sigma:= \sutwo^E$. Let $\Sigma'$ be the subset of $\Sigma$ consisting of all $U$ such that $\|U_e - I\|\le \alpha^{-1}$ for all $e$, where $\|\cdot\|$ denotes Euclidean (i.e., Frobenius) norm. It is not hard to see that the normalized Haar measure of $\Sigma'$ is  $\ge (C_1\alpha)^{-C_2L^d}$. 

Now, recall the field $V$ defined in Section \ref{sutwosec}. In Lemma \ref{keylmm}, we obtain the key estimate 
\begin{align*}
\ee\|V_e - I\|^2 &\le \frac{C_1}{\alpha^4 g^2} + \frac{C_2\log \alpha}{\alpha^2}. 
\end{align*}
Note that this bound {\it has no dependence on $L$.} This is the crucial fact that allows us to first send $L\to \infty$ and then construct the continuum limit, as follows.

Note that since the above bound has no dependence on $L$, we may as well send $L\to\infty$ and still retain the bound. Let $\Lambda' := \{-M,\ldots,M\}^d$, where $M$ will be chosen later (to grow as $\ve \to 0$). Let $E'$ denote the set of edges of $\Lambda'$. Then 
\begin{align*}
\pp(\max_{e\in E'}\|V_e - I\| > \delta) &\le \frac{1}{\delta^2}\sum_{e\in E'}\ee\|V_e-I\|^2\\
&\le \frac{C_8M^d}{\alpha^4 g^2\delta^2} + \frac{C_9M^d \log \alpha}{\alpha^2\delta^2}. 
\end{align*}
Choosing $\alpha = g^{\kappa-1}$, $M=g^{-4\kappa}$ and $\delta= g^{1-6d\kappa}$ for some sufficiently small $\kappa$, this bound can be taken to zero as $g\to 0$. Armed with this and several other estimates of a similar nature, the rest of the analysis proceeds by perturbative expansion around a Gaussian measure, which scales to the Euclidean Proca field. 

Roughly speaking, the formula for the probability density function of $V$ shows that if $V_e = I + g A_e + o(g)$, then the probability density function of the matrix-valued field $A$ is proportional to 
\[
\exp\biggl(-\frac{1}{2}\sum_{p\in P}\|A_p\|^2 - \frac{\alpha^2g^2}{4}\sum_{e\in E} \|A_e\|^2+\text{ lower order terms}\biggr),
\]
where $A_p$ is the sum of $A_e$'s around the plaquette $p$ with appropriate signs. If we set the lattice spacing to be $\ve:=\alpha g$, and $g$ goes to zero sufficiently fast as $\ve \to 0$, then the lower order terms in the above display do not interfere in the final outcome, and the field converges to a massive Proca field. This is because the Proca field is the scaling limit of a discrete field $B$ on $\Z^d$ with probability density function proportional to 
\[
\exp\biggl(-\frac{1}{2}\sum_{p\in P}\|B_p\|^2 - \frac{\ve^2}{4}\sum_{e\in E} \|B_e\|^2\biggr).
\]
The three components of $A$ decouple because the couplings are all in the lower order terms, arising due to the non-Abelian nature of $\sutwo$. Sending $g\to 0$ sufficiently fast suppresses this non-Abelian interference.

Lastly, let us remark on why the same technique does not work for non-Abelian gauge groups other than $\sutwo$. Recall that we showed in Subsection \ref{sutwosec} that unitary gauge fixing in $\sutwo$ theory eliminates the Higgs field completely, and leaves us with only a gauge field to study. The term in the action corresponding to the interaction of the Higgs field and the gauge field gets replaced by a mass term for the gauge field, as shown by the formula for the probability density function. One cannot employ such a trick for $\mathrm{SU}(3)$, for instance, when the Higgs field is either in the fundamental or the adjoint representation. There seems to be no simple gauge fixing procedure that transforms the problem to that of studying only a gauge field where a mass term appears in the action. The issue is that even after gauge fixing, a nontrivial Higgs field remains, because the stabilizer of the gauge action is nontrivial.

\subsection{Open questions}\label{opensec}
It is evident from the discussion in the previous subsection that holding $\alpha g = c \ve$ is the key fact that gives rise to a massive field in the scaling limit. It is natural to ask whether sending $g$ to zero so fast that $g= O(\ve^{50d})$ is really necessary for obtaining this limit. The answer may depend on the model. For $\uone$ theory, it is possible that simply under the conditions that $\alpha g = c \ve$ and $g \to 0$ as $\ve \to 0$, with no further conditions on the decay rate of $g$ (or perhaps a very slow decay rate), we obtain the scaling limit given by Theorem \ref{uonethm}. For $\sutwo$ theory, it seems likely that a slow decay of $g$ will cause non-Abelian effects to manifest themselves, possibly leading to a different scaling limit. It would be interesting to figure out the various regimes for both theories. But holding $\alpha g = c\ve$ seems to be a necessary condition for mass generation in all cases.

Another open problem is to extend the results to general Higgs potentials, going beyond the degenerate potential considered in this paper. It is not clear how to make this extension.

\section{Proofs for the Euclidean Proca field}\label{proofsec}
All proofs for the results about the Euclidean Proca field are in this section, presented in the same order as  the respective results have appeared in previous sections. 
\subsection{Proof of Lemma \ref{klemma}}\label{klemmaproof}
We begin with a preliminary upper bound on $K_\lambda(x)$. 
\begin{lmm}\label{klmm}
The function $K_\lambda$ satisfies the bound
\begin{align*}
K_\lambda(x)\le 
\begin{cases} 
C(d)e^{-\|x\|\sqrt{\lambda/2}}\|x\|^{-(d-2)} &\text{ if } d\ge 3,\\
C(d)e^{-\|x\|\sqrt{\lambda}/4} (1+\lambda^{-1})&\text{ if } d=2 \text{ and } \|x\|\ge 2\sqrt{\lambda},\\
C(d)e^{-\|x\|\sqrt{\lambda}/4}(\lambda^{-1} + \log(2\sqrt{\lambda}/\|x\|)) &\text{ if } d=2 \text{ and } \|x\| < 2\sqrt{\lambda},
\end{cases}
\end{align*}
for all $x\in \R^d\setminus \{0\}$, where $C(d)$ is a constant that depends only on $d$.
\end{lmm}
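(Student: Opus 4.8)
The plan is to treat $K_\lambda(x)$ as a Gamma-type integral in the Schwinger parameter $t$ and to extract the exponential decay by an elementary arithmetic--geometric mean split of the exponent, avoiding any appeal to the asymptotics of modified Bessel functions. Write $r := \|x\|$, so that the exponent is $-\frac{r^2}{4t} - \lambda t$, which is maximized at $t^\ast = r/(2\sqrt\lambda)$ with maximal value $-r\sqrt\lambda$. This already explains why the bounds carry a factor of the form $e^{-c\,r\sqrt\lambda}$; I will only aim to peel off a fraction of this optimal rate, which is all that is needed and which keeps the residual $t$-integral tractable.

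First I would dispose of the case $d \ge 3$. Using $\lambda t + \tfrac12\cdot\tfrac{r^2}{4t} \ge 2\sqrt{\lambda\cdot r^2/8} = r\sqrt{\lambda/2}$, I split the exponent as $\frac{r^2}{4t} + \lambda t \ge r\sqrt{\lambda/2} + \frac{r^2}{8t}$ and factor out $e^{-r\sqrt{\lambda/2}}$. This leaves
\[
K_\lambda(x) \le \frac{e^{-r\sqrt{\lambda/2}}}{(4\pi)^{d/2}} \int_0^\infty t^{-d/2} e^{-r^2/(8t)}\,dt,
\]
and the substitution $u = r^2/(8t)$ turns the integral into $8^{d/2-1}\Gamma(d/2-1)\,r^{-(d-2)}$. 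Since $\Gamma(d/2-1) < \infty$ precisely when $d \ge 3$, this yields the first case with $C(d) = 8^{d/2-1}\Gamma(d/2-1)/(4\pi)^{d/2}$.

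The case $d=2$ is the real obstacle, and it is exactly the divergence of $\Gamma(d/2-1)=\Gamma(0)$ that forces the more complicated, piecewise statement: the residual integral no longer converges, reflecting the logarithmic singularity of the two-dimensional Green's function at the origin and the fact that the $\lambda t$ term can no longer be discarded at large $t$. Here I would again peel off an exponential factor by an AM--GM split (now keeping a $\tfrac34$-fraction of the exponent, which produces the weaker rate $e^{-r\sqrt\lambda/4}$), and then split the remaining integral $\int_0^\infty t^{-1} e^{-\frac34(\lambda t + r^2/(4t))}\,dt$ at a cutoff comparable to the transition scale. On the small-$t$ piece I drop $\lambda t$ and change variables to obtain an exponential-integral $E_1$ whose small-argument logarithmic growth produces the $\log(2\sqrt\lambda/\|x\|)$ term when $\|x\| < 2\sqrt\lambda$; on the large-$t$ piece I drop $r^2/(4t)$ and bound the resulting $E_1$ using $E_1(z)\le z^{-1}e^{-z}$ to produce the $\lambda^{-1}$ term. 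The threshold $\|x\| = 2\sqrt\lambda$ is the value at which $t^\ast = 1$, i.e.\ where the peak of the integrand crosses the cutoff, separating the two regimes.

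The main work, then, is confined to the planar case: one must track how the incomplete-gamma/$E_1$ integrals behave uniformly in both $r$ and $\lambda$, and verify that the admittedly loose right-hand side dominates in every regime. The bounds are not sharp---the true decay rate is $e^{-r\sqrt\lambda}$ and the true singularity is $\log(1/(\|x\|\sqrt\lambda))$---but they are elementary, explicit, and amply sufficient for the later estimates. I expect no difficulty in $d\ge 3$; the only care needed is the bookkeeping of constants and the verification that the $E_1$-bounds glue continuously across $\|x\| = 2\sqrt\lambda$ in two dimensions.
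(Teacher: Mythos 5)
Your proposal is correct and follows essentially the same route as the paper's proof: an AM--GM split of the exponent $\frac{\|x\|^2}{4t}+\lambda t$ to peel off a fractional exponential rate, after which the residual integral is an explicit Gamma integral for $d\ge 3$ (the paper just performs the substitution $s=\|x\|^2/4t$ before applying AM--GM), and for $d=2$ a split of the remaining integral at the peak scale $t^*=\|x\|/(2\sqrt{\lambda})$, dropping one term of the exponent on each piece. Your packaging of the two-dimensional pieces as exponential integrals $E_1$ with the bounds $E_1(z)\le z^{-1}e^{-z}$ and $E_1(z)\lesssim \log(1/z)$ is only a cosmetic reorganization of the same elementary estimates the paper carries out directly, and the regime bookkeeping you describe does close (e.g.\ $\log(1/(\|x\|\sqrt{\lambda}))\le \lambda^{-1}+C\log(2\sqrt{\lambda}/\|x\|)$ when $\|x\|<2\sqrt{\lambda}$).
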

\begin{proof}
First, let $d\ge 3$. Making the change of variable $s = \|x\|^2/4t$ in the integral defining $K_\lambda(x)$ in equation~\eqref{klambdadef} shows that
\begin{align}\label{kalt}
K_\lambda(x) &= \frac{1}{\|x\|^{d-2}}\int_0^\infty \frac{s^{(d-4)/2}}{4\pi^{d/2}}\exp\biggl(-s- \frac{\lambda \|x\|^2}{4s}\biggr) ds.
\end{align}
Now note that by the AM-GM inequality, 
\begin{align*}
\frac{s}{2}+ \frac{\lambda \|x\|^2}{4s}&\ge \biggl(\frac{s \lambda \|x\|^2}{2s}\biggr)^{1/2} = \frac{\|x\|\sqrt{\lambda}}{\sqrt{2}}.  
\end{align*}
By \eqref{kalt}, this shows that
\begin{align*}
K_\lambda(x) &\le \frac{e^{-\|x\|\sqrt{\lambda/2}}}{\|x\|^{d-2}}\int_0^\infty \frac{s^{(d-4)/2}}{4\pi^{d/2}}e^{-s/2} ds.
\end{align*}
It is easy to see that the above integral is finite, since $d\ge 3$. This completes the proof when $d\ge 3$. Next, let $d=2$. First, note that by an application of the AM-GM inequality similar to the above, we get
\begin{align*}
K_\lambda(x) &\le e^{-\|x\|\sqrt{\lambda}/4}\int_0^\infty \frac{1}{4\pi t} \exp\biggl(-\frac{\|x\|^2}{8t} - \frac{\lambda t}{2}\biggr) dt.
\end{align*}
We break up the above integral into two parts, from $0$ to $\|x\|/2\sqrt{\lambda}$, and from $\|x\|/2\sqrt{\lambda}$ to infinity. For the second part, note that
\begin{align*}
\int_{\|x\|/2\sqrt{\lambda}}^\infty \frac{1}{4\pi t} \exp\biggl(-\frac{\|x\|^2}{8t} - \frac{\lambda t}{2}\biggr) dt&\le \int_{\|x\|/2\sqrt{\lambda}}^\infty \frac{1}{4\pi t} e^{-\lambda t/2} dt.
\end{align*}
If $\|x\|\ge 2\sqrt{\lambda}$, then the right side is bounded above by $C/\lambda$ for a universal constant $C$. If $\|x\|<2\sqrt{\lambda}$, we further break up the integral into an integral from $\|x\|/2\sqrt{\lambda}$ and an integral from $1$ to $\infty$. The second integral is bounded above by $C/\lambda$, and the first integral is bounded above $C\log(2\sqrt{\lambda}/\|x\|)$. Next, note that by the change of variable $s = \|x\|^2/4t$, we get
\begin{align*}
\int_0^{\|x\|/2\sqrt{\lambda}} \frac{1}{4\pi t} \exp\biggl(-\frac{\|x\|^2}{8t} - \frac{\lambda t}{2}\biggr) dt &= 
\int_{\|x\|/2\sqrt{\lambda}}^\infty \frac{1}{4\pi s}\exp\biggl(-\frac{s}{2}- \frac{\lambda \|x\|^2}{8s}\biggr) ds\\
&\le \int_{\|x\|/2\sqrt{\lambda}}^\infty \frac{1}{4\pi s} e^{-s/2} ds.
\end{align*}
By a similar argument as above, this is bounded by  a universal constant $C$ if $\|x\|\ge 2\sqrt{\lambda}$ and by $C\log(2\sqrt{\lambda}/\|x\|)$ if $\|x\|< 2\sqrt{\lambda}$. This completes the proof in the case $d=2$.
\end{proof}

The bound from Lemma \ref{klmm} shows that $K_\lambda f$ is well-defined and finite for any $f\in \ms(\R^d)$. The next lemma proves some of the properties asserted by Lemma \ref{klemma}.
\begin{lmm}\label{kclosedlmm}
The operator $K_\lambda$ sends Schwartz functions to Schwartz functions, and commutes with the derivative operators $\partial_i := \partial/\partial x_i$, $i=1,\ldots,d$. 
\end{lmm}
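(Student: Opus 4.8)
The plan is to pass to the Fourier transform, under which convolution by $K_\lambda$ becomes multiplication by the smooth symbol $(\lambda+\|\xi\|^2)^{-1}$, and then to invoke the standard fact that the Fourier transform is a bijection of $\ms(\R^d)$ onto itself. First I would record that $K_\lambda$ is a genuine convolution kernel lying in $L^1(\R^d)$: the pointwise estimate of Lemma \ref{klmm} shows that its singularity at the origin is integrable (of order $\|x\|^{-(d-2)}$ for $d\ge 3$, logarithmic for $d=2$) and that its tail decays exponentially, so $K_\lambda\in L^1$ and its Fourier transform is a well-defined continuous function. Writing $p_t(x):=(4\pi t)^{-d/2}e^{-\|x\|^2/4t}$ for the heat kernel, equation~\eqref{klambdadef} reads $K_\lambda=\int_0^\infty e^{-\lambda t}p_t\,dt$, and using the classical identity $\hat{p_t}(\xi)=e^{-t\|\xi\|^2}$ together with Fubini's theorem (justified because $e^{-\lambda t}p_t(x)e^{-\I\xi\cdot x}$ is absolutely integrable in $(t,x)$), I obtain
\[
\hat{K_\lambda}(\xi)=\int_0^\infty e^{-\lambda t}e^{-t\|\xi\|^2}\,dt=\frac{1}{\lambda+\|\xi\|^2}=:m(\xi),
\]
up to the usual normalization constants. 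By the convolution theorem, $\widehat{K_\lambda f}=m\,\hat f$ for every $f\in\ms(\R^d)$.

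Next I would check that multiplication by $m$ preserves the Schwartz class. Because $\lambda>0$, the denominator never vanishes, so $m$ is smooth on all of $\R^d$; an easy induction shows that every derivative $\partial^\alpha m$ is a finite sum of terms of the form (a polynomial of degree $\le|\alpha|$) divided by $(\lambda+\|\xi\|^2)^k$ with $k\ge 1+|\alpha|/2$, and each such term is bounded on $\R^d$ (indeed decaying, since the numerator degree is strictly less than $2k$). Thus $m$ is a smooth function all of whose derivatives are bounded, and multiplying a Schwartz function by such a function again yields a Schwartz function: by the Leibniz rule each seminorm of $m\hat f$ is controlled by finitely many seminorms of $\hat f$. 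Hence $m\hat f\in\ms(\R^d)$, and applying the inverse Fourier transform — which likewise maps $\ms(\R^d)$ into itself — gives $K_\lambda f=\mathcal{F}^{-1}(m\hat f)\in\ms(\R^d)$.

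Finally, commuting with $\partial_i$ is immediate on the Fourier side: since $\widehat{\partial_i g}(\xi)=\I\xi_i\,\hat g(\xi)$, both $K_\lambda\partial_i f$ and $\partial_i K_\lambda f$ have Fourier transform equal to $\I\xi_i\,m(\xi)\,\hat f(\xi)$, the two multiplication operators $m(\xi)$ and $\I\xi_i$ commuting trivially; inverting the Fourier transform then yields $K_\lambda\partial_i f=\partial_i K_\lambda f$. I expect the only genuinely delicate point to be the identity $\hat{K_\lambda}=m$ — that is, the interchange of the $t$-integral with the Fourier integral and the membership $K_\lambda\in L^1(\R^d)$ — for which the pointwise bounds of Lemma \ref{klmm} are exactly what is needed; everything after that is soft functional analysis.
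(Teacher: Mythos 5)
Your proof is correct, but it takes a genuinely different route from the paper's. The paper argues entirely in physical space: it differentiates under the convolution integral via dominated convergence (using the Lipschitz bound on $f$ and the integrability of $K_\lambda$ furnished by Lemma \ref{klmm}) to get $\partial_i K_\lambda f = K_\lambda \partial_i f$, and then establishes rapid decay of $K_\lambda f$ by splitting the convolution integral into the regions $\|y\|\le \|x\|/2$ and $\|y\|>\|x\|/2$ and applying the pointwise bounds of Lemma \ref{klmm} on each piece. You instead compute the symbol $\hat{K_\lambda}(\xi)=(\lambda+\|\xi\|^2)^{-1}$ (the Fubini justification and the $L^1$ membership via Lemma \ref{klmm} are exactly right) and reduce everything to the standard facts that a smooth multiplier with bounded derivatives preserves $\ms(\R^d)$ and that Fourier multipliers commute. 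Your approach buys more than the statement at hand: the identity $(\lambda+\|\xi\|^2)\,m(\xi)=1$ gives Lemma \ref{kinvlmm} (that $K_\lambda^{-1}=-\Delta+\lambda I$ on $\ms(\R^d)$) essentially for free, whereas the paper proves that separately by a fairly long integration-by-parts computation with the heat kernel. What the paper's elementary route buys is self-containedness (no Fourier inversion theory) and direct reuse of the real-space kernel estimates, which are needed anyway for the exponential-decay asymptotics in Lemma \ref{masslmm}, where the Fourier symbol alone would not suffice. One point you leave implicit but should state: having shown $m\hat f\in\ms(\R^d)$, you need $K_\lambda f=\mathcal{F}^{-1}(m\hat f)$ \emph{pointwise}, which follows since both sides are continuous (for the left side, $K_\lambda\in L^1$ convolved with a bounded continuous $f$) and agree almost everywhere by Fourier inversion in $L^1$; this is routine and does not affect correctness.
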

\begin{proof}
Take any $f\in \ms(\R^d)$. Note that $K_\lambda f$ can be alternatively expressed as
\begin{align*}
K_\lambda f(x) = \int_{\R^d} K_\lambda(y) f(x-y) dy.
\end{align*}
Since $f\in \ms(\R^d)$, there is a finite constant $C$ such that for any $x, y, z\in \R^d$,
\begin{align*}
|f(x+z-y)-f(x-y)| &\le C\|z\|. 
\end{align*}
By the bound from Lemma \ref{klmm}, $K_\lambda$ is integrable on $\R^d$. From these two observations, it follows by the dominated convergence theorem that $K_\lambda f$ is a differentiable function, and for each $i$, 
\[
\partial_i K_\lambda f = K_\lambda \partial_i f.
\]
This proves, in particular, that $K_\lambda$ commutes with the derivative operators. By iteration, this also proves that $K_\lambda f$ is infinitely differentiable for any $f\in \ms(\R^d)$, and $K_\lambda$ commutes with any product of derivative operators. 

Thus, to show that $K_\lambda$ maps $\ms(\R^d)$ into $\ms(\R^d)$, it suffices to prove that for any rapidly decaying function $f\in C^\infty(\R^d)$, $K_\lambda f$ is also rapidly decaying. Take any such $f$, and any $\alpha >0$. By the rapidly decaying nature of $f$, we know that there is some $C$ such that $|f(x)|\le C(1+\|x\|)^{-\alpha}$ for all $x$. Suppose that $d\ge 3$. Then by Lemma \ref{klmm}, we have
\begin{align*}
|K_\lambda f(x)| &\le \int_{\R^d} K_\lambda(y) |f(x-y)|dy \\
&\le C_1 \int_{\R^d} \frac{e^{-\|y\|\sqrt{\lambda/2}}}{\|y\|^{d-2}} (1+\|x-y\|)^{-\alpha} dy,
\end{align*}
where $C_1$ depends only on $d$ and $f$. Now, for $\|y\|\le \|x\|/2$, 
\[
\frac{e^{-\|y\|\sqrt{\lambda/2}}}{\|y\|^{d-2}} (1+\|x-y\|)^{-\alpha} \le \frac{e^{-\|y\|\sqrt{\lambda/2}}}{\|y\|^{d-2}} (1+\|x\|/2)^{-\alpha},
\]
and for $\|y\|> \|x\|/2$,
\[
\frac{e^{-\|y\|\sqrt{\lambda/2}}}{\|y\|^{d-2}} (1+\|x-y\|)^{-\alpha}\le \frac{e^{-\|y\|\sqrt{\lambda/2}}}{\|y\|^{d-2}}.
\]
Using these two bounds in these two regions, and passing to polar coordinates in the second integral below, we get
\begin{align*}
|K_\lambda f(x)| &\le  C_2 (1+\|x\|/2)^{-\alpha} \int_{\|y\|\le \|x\|/2} \frac{e^{-\|y\|\sqrt{\lambda/2}}}{\|y\|^{d-2}} dy + C_3 \int_{\|y\|> \|x\|/2} \frac{e^{-\|y\|\sqrt{\lambda/2}}}{\|y\|^{d-2}}dy\\
&\le C_4 (1+\|x\|/2)^{-\alpha} \int_{\R^d} \frac{e^{-\|y\|\sqrt{\lambda/2}}}{\|y\|^{d-2}} dy + C_5 \int_{\|x\|/2}^\infty r e^{-r\sqrt{\lambda/2}} dr \\
&\le C_6 (1+\|x\|/2)^{-\alpha} + C_7 e^{-C_5\|x\|},
\end{align*}
where $C_2, \ldots,C_7$ depend only on $d$, $f$ and $\lambda$. This shows that $K_\lambda f$ is rapidly decaying if $d\ge 3$. For $d=2$, all steps go through exactly as above, using the bound from Lemma \ref{klmm}.
\end{proof}

The next lemma proves the remaining properties of $K_\lambda$ asserted by Lemma \ref{klemma}, thereby completing the proof of Lemma \ref{klemma}.
\begin{lmm}\label{kinvlmm}
The operator $K_\lambda$ is a bijection of $\ms(\R^d)$ onto itself, and its inverse is the operator $-\Delta +\lambda I$, where $\Delta$ is the Laplacian operator and $I$ is the identity operator.
\end{lmm}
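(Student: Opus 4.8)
The plan is to pass to the Fourier transform, where both operators become multiplication operators that are manifestly inverse to each other. Write $p_t(x) := (4\pi t)^{-d/2}\exp(-\|x\|^2/4t)$ for the heat kernel, so that the definition \eqref{klambdadef} reads $K_\lambda(x) = \int_0^\infty e^{-\lambda t} p_t(x)\, dt$ and, correspondingly, $K_\lambda f = \int_0^\infty e^{-\lambda t}(p_t * f)\, dt$ for $f \in \ms(\R^d)$.

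First I would record the two symbol computations. Since $p_t$ is a Gaussian, its Fourier transform (with the convention $\hat g(\xi) = \int_{\R^d} g(x) e^{-\I x\cdot\xi}\, dx$) is $\hat{p_t}(\xi) = e^{-t\|\xi\|^2}$. Because $K_\lambda$ is a nonnegative $L^1$ function (Lemma \ref{klmm} gives the needed decay, and directly $\int_0^\infty\int_{\R^d} p_t(x) e^{-\lambda t}\, dx\, dt = \lambda^{-1} < \infty$), Fubini's theorem lets me interchange the $t$-integral with the Fourier integral to obtain
\[
\hat{K_\lambda}(\xi) = \int_0^\infty e^{-\lambda t} e^{-t\|\xi\|^2}\, dt = \frac{1}{\|\xi\|^2 + \lambda}.
\]
On the other hand, for any $g\in\ms(\R^d)$ the differentiation rule gives $\widehat{(-\Delta + \lambda I)g}(\xi) = (\|\xi\|^2 + \lambda)\hat g(\xi)$.

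Next I would combine these. For $f\in\ms(\R^d)$, the convolution theorem (valid since $K_\lambda\in L^1$ and $f\in L^1$) gives $\widehat{K_\lambda f}(\xi) = \hat{K_\lambda}(\xi)\,\hat f(\xi) = \hat f(\xi)/(\|\xi\|^2 + \lambda)$. By Lemma \ref{kclosedlmm}, $K_\lambda f\in\ms(\R^d)$, so $(-\Delta + \lambda I)K_\lambda f$ is again Schwartz and
\[
\widehat{(-\Delta + \lambda I)K_\lambda f}(\xi) = (\|\xi\|^2 + \lambda)\,\frac{\hat f(\xi)}{\|\xi\|^2 + \lambda} = \hat f(\xi).
\]
Fourier inversion on $\ms(\R^d)$ then yields $(-\Delta + \lambda I)K_\lambda f = f$. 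Running the same computation in the opposite order gives $K_\lambda(-\Delta + \lambda I)f = f$ for every $f\in\ms(\R^d)$. These two identities show simultaneously that $K_\lambda$ is injective (it has a left inverse), surjective (every $f$ equals $K_\lambda$ applied to $(-\Delta+\lambda I)f$), and that its two-sided inverse is exactly $-\Delta + \lambda I$, completing the proof.

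The main technical point to be careful about is the symbol identity $\hat{K_\lambda}(\xi) = (\|\xi\|^2+\lambda)^{-1}$, i.e.\ the interchange of the $t$-integral and the spatial Fourier transform; this is where the integrability furnished by Lemma \ref{klmm} (together with the elementary Tonelli bound above) is essential. Everything else is bookkeeping, since the earlier lemmas already guarantee that $K_\lambda$ preserves $\ms(\R^d)$, and $-\Delta+\lambda I$ trivially does. A purely real-variable alternative, avoiding Fourier analysis, is to set $u(t,x) := (p_t*f)(x)$, use the heat equation $\partial_t u = \Delta u$ to replace $-\Delta u$ by $-\partial_t u$ inside the integral, and integrate by parts in $t$: the boundary term at $t=0$ produces $u(0,\cdot)=f$, the term at $t=\infty$ vanishes, and the remaining integrals cancel against $\lambda K_\lambda f$. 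This route requires justifying differentiation under the integral sign and the limits $u(t,\cdot)\to f$ as $t\to0^+$ and $e^{-\lambda t}u(t,\cdot)\to0$ as $t\to\infty$, all of which hold because $f$ is Schwartz.
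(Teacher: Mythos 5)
Your proof is correct, but it takes a genuinely different route from the paper. The paper proves Lemma \ref{kinvlmm} by a direct real-variable computation: starting from $g = K_\lambda f$, it uses Fubini, integration by parts in the space variable to move $\Delta$ onto the Gaussian kernel, the observation that the resulting expression is exactly $\partial_t$ of the heat kernel, and then integration by parts in $t$ on $[\ve,\infty)$; the boundary term at $t=\ve$ converges to $f(x)$ as $\ve \downarrow 0$ by an approximate-identity argument, yielding $\Delta g = -f + \lambda g$, after which $K_\lambda(-\Delta+\lambda I)=I$ follows from the commutation statement in Lemma \ref{kclosedlmm} rather than a second computation. So the "purely real-variable alternative" you sketch in your final paragraph is, essentially verbatim, the paper's actual proof, including the boundary-term bookkeeping you flag as needing justification. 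Your main argument instead concentrates all the analysis into the single symbol identity $\hat{K_\lambda}(\xi) = (\|\xi\|^2+\lambda)^{-1}$, justified by the Tonelli bound $\int_{\R^d} K_\lambda = \lambda^{-1}$, and then reduces both compositions to multiplication of symbols followed by Fourier inversion on $\ms(\R^d)$ --- which is legitimate here since Lemma \ref{kclosedlmm} guarantees all functions in play are Schwartz, and the convolution theorem applies with $K_\lambda \in L^1$. What each approach buys: yours is shorter and makes the resolvent structure $(\|\xi\|^2+\lambda)^{-1}$ transparent, at the cost of importing the Fourier-analytic toolkit (convolution theorem, inversion); the paper's argument is longer but self-contained at the level of calculus and dominated convergence, consistent with the elementary style of Section \ref{proofsec}. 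One small economy you could borrow from the paper: since Lemma \ref{kclosedlmm} already gives that $K_\lambda$ commutes with derivative operators, the identity $K_\lambda(-\Delta+\lambda I) = I$ follows immediately from $(-\Delta+\lambda I)K_\lambda = I$ without rerunning the symbol computation in the opposite order.
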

\begin{proof}
Take any $f\in \ms(\R^d)$ and let $g := K_\lambda f$. By Lemma \ref{kclosedlmm}, $g$ is a Schwartz function, and $\Delta g = K_\lambda \Delta f$. Thus, by Fubini's theorem,
\begin{align*}
\Delta g(x) &= \int_{\R^d}K(x-y) \Delta f(y) dy\\
&= \int_{\R^d} \int_0^\infty  \frac{1}{(4\pi t)^{d/2}} \exp\biggl(-\frac{\|x-y\|^2}{4t} - \lambda t\biggr)\Delta f(y) dt dy\\
&=  \int_0^\infty  \int_{\R^d} \frac{1}{(4\pi t)^{d/2}} \exp\biggl(-\frac{\|x-y\|^2}{4t} - \lambda t\biggr)\Delta f(y) dy dt.
\end{align*}
Note that for any $t>0$,
\begin{align*}
&\biggl| \int_{\R^d} \frac{1}{(4\pi t)^{d/2}} \exp\biggl(-\frac{\|x-y\|^2}{4t} - \lambda t\biggr)\Delta f(y) dy \biggr| \\
&\le \|\Delta f\|_\infty  \int_{\R^d} \frac{1}{(4\pi t)^{d/2}} \exp\biggl(-\frac{\|x-y\|^2}{4t}\biggr) dy = \|\Delta f\|_\infty,
\end{align*}
where $\|\Delta f\|_\infty$ denotes the supremum norm of $\Delta f$. This uniform bound shows that
\begin{align*}
\Delta g(x) &= \lim_{\ve \downarrow 0}  \int_\ve^\infty  \int_{\R^d} \frac{1}{(4\pi t)^{d/2}} \exp\biggl(-\frac{\|x-y\|^2}{4t} - \lambda t\biggr)\Delta f(y) dy dt.
\end{align*}
Take any $t>0$. Applying integration by parts (which is valid since the integrand is sufficiently well-behaved and $\Delta f$ is rapidly decaying), we get
\begin{align*}
&\int_{\R^d} \exp\biggl(-\frac{\|x-y\|^2}{4t}\biggr)\Delta f(y)dy \\
&= \int_{\R^d}  f(y) \sum_{i=1}^d \frac{\partial^2}{\partial y_i^2}\exp\biggl(-\frac{\|x-y\|^2}{4t}\biggr) dy\\
&= \int_{\R^d} f(y) \biggl(-\frac{d}{2t} + \frac{\|x-y\|^2}{4t^2}\biggr) \exp\biggl(-\frac{\|x-y\|^2}{4t}\biggr) dy.
\end{align*}
Plugging this into the previous display, and again applying Fubini's theorem, we get
\begin{align*}
\Delta g(x) 
&=\lim_{\ve \downarrow 0} \int_{\R^d} \int_\ve^\infty   \frac{1}{(4\pi t)^{d/2}} f(y) \biggl(-\frac{d}{2t} + \frac{\|x-y\|^2}{4t^2}\biggr) \exp\biggl(-\frac{\|x-y\|^2}{4t}-\lambda t\biggr) dt dy.
\end{align*}
Now note that for any distinct $x$ and $y$,
\begin{align*}
&\frac{\partial}{\partial t} \biggl(\frac{1}{(4\pi t)^{d/2}} \exp\biggl(-\frac{\|x-y\|^2}{4t}\biggr)\biggr) \\
&=  \frac{1}{(4\pi t)^{d/2}}  \biggl(-\frac{d}{2t} + \frac{\|x-y\|^2}{4t^2}\biggr) \exp\biggl(-\frac{\|x-y\|^2}{4t}\biggr).
\end{align*}
Using this in the previous display, we get
\begin{align*}
\Delta g(x) &= \lim_{\ve \downarrow 0} \int_{\R^d}\int_\ve^\infty f(y)e^{-\lambda t} \frac{\partial}{\partial t}  \biggl(\frac{1}{(4\pi t)^{d/2}} \exp\biggl(-\frac{\|x-y\|^2}{4t}\biggr)\biggr) dtdy.
\end{align*}
Take any $\ve >0$. Using integration by parts, we get
\begin{align*}
&\int_\ve^\infty e^{-\lambda t} \frac{\partial}{\partial t}  \biggl(\frac{1}{(4\pi t)^{d/2}} \exp\biggl(-\frac{\|x-y\|^2}{4t}\biggr)\biggr) dt \\
&= - e^{-\lambda \ve} \frac{1}{(4\pi \ve )^{d/2}} \exp\biggl(-\frac{\|x-y\|^2}{4\ve}\biggr) + \int_\ve^\infty \lambda e^{-\lambda t} \frac{1}{(4\pi t)^{d/2}} \exp\biggl(-\frac{\|x-y\|^2}{4t}\biggr) dt.
\end{align*}
Combining this with the previous step gives us
\begin{align}\label{deltagfinal}
\Delta g(x) &= - \lim_{\ve \downarrow 0} \int_{\R^d} f(y) e^{-\lambda \ve} \frac{1}{(4\pi \ve )^{d/2}} \exp\biggl(-\frac{\|x-y\|^2}{4\ve}\biggr) dy \notag \\
&\qquad + \lim_{\ve \downarrow 0} \int_{\R^d} \int_\ve^\infty \lambda  f(y) e^{-\lambda t} \frac{1}{(4\pi t)^{d/2}} \exp\biggl(-\frac{\|x-y\|^2}{4t}\biggr) dt dy.
\end{align}
By the change of variable $z = \ve^{-1/2}(x-y)$, we have
\begin{align*}
\int_{\R^d} f(y) \frac{1}{(4\pi \ve )^{d/2}} \exp\biggl(-\frac{\|x-y\|^2}{4\ve}\biggr) dy &= \int_{\R^d} f(x - \ve^{1/2} z)  \frac{1}{(4\pi )^{d/2}} e^{-\|z\|^2/4} dz. 
\end{align*}
By the dominated convergence theorem, this shows that the first limit in \eqref{deltagfinal} is equal to $f(x)$. Again, by the dominated convergence theorem and Fubini's theorem, the second limit equals $\lambda g(x)$. Thus, $\Delta g(x)=-f(x)+\lambda g(x)$. This proves that $(-\Delta +\lambda I)K_\lambda =I$. Since $K_\lambda$ commutes with differential operators, this also proves that $K_\lambda(-\Delta +\lambda I) = I$. These two identities imply that $K_\lambda$ is a bijection of $\ms(\R^d)$ onto itself, and $K_\lambda^{-1} = - \Delta + \lambda I$.
\end{proof}

\subsection{Proof of Lemma \ref{qlemma}}\label{qlemmaproof}
It is clear that $Q_\lambda$ maps $\ma(\R^d)$ into itself, and by Lemma \ref{kclosedlmm}, the same is true for $R_\lambda$. Now take any $f\in \ma(\R^d)$ and let $g:= R_\lambda f$ and $h:=Q_\lambda g$. Then for $1\le i\le d$, using Lemma~\ref{kclosedlmm} and Lemma~\ref{kinvlmm}, we get
\begin{align*}
h_i &= -\Delta g_i + \lambda g_i + \sum_{j=1}^d \partial_i\partial_j g_j\\
&= (-\Delta+\lambda I)\biggl(K_\lambda f_i -\lambda^{-1}\sum_{j=1}^d \partial_i\partial_j K_\lambda f_j\biggr)  + \sum_{j=1}^d \partial_i \partial_j\biggl(K_\lambda f_j -\lambda^{-1}\sum_{k=1}^d \partial_j\partial_k K_\lambda f_k\biggr)\\
&= f_i -\lambda^{-1}\sum_{j=1}^d \partial_i\partial_j f_j + \sum_{j=1}^d \partial_i \partial_j K_\lambda f_j - \lambda^{-1}\sum_{j,k=1}^d \partial_i \partial_j^2 \partial_k K_\lambda f_k\\
&= f_i -\lambda^{-1}\sum_{j=1}^d \partial_i\partial_j f_j + \lambda^{-1}\sum_{k=1}^d \partial_i \partial_k K_\lambda(\lambda I- \Delta)f_k \\
&= f_i. 
\end{align*}
Thus, $Q_\lambda R_\lambda = I$. By Lemma \ref{kclosedlmm}, $Q_\lambda$ and $R_\lambda$ commute. Thus, $R_\lambda Q_\lambda = I$. This proves that $Q_\lambda$ and $R_\lambda$ are both bijections of $\ma(\R^d)$ onto itself, and $Q_\lambda = R_\lambda^{-1}$.

\subsection{The discrete Proca field and the proof of Lemma \ref{mmfflemma}}\label{mmffproof}
We will prove Lemma \ref{mmfflemma} by showing that a certain sequence of random $1$-forms converges in law to the Euclidean Proca field, in the sense that each coordinate converges in law to the required limit. The following lemma shows that this is sufficient for proving the existence of the field. 

\begin{lmm}\label{existlmm}
Let $\{X_n\}_{n\ge 1}$ be a sequence of random distributional $1$-forms such that for each $f\in \ma(\R^d)$, $X_n(f)$ converges in law as $n\to\infty$. Then there is a random distributional $1$-form $X$ such that $X_n$ converges in law to $X$.
\end{lmm}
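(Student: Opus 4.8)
The plan is to reconstruct the limiting object from its finite-dimensional distributions via the Kolmogorov extension theorem, using the linearity built into each $X_n$ to upgrade the assumed one-dimensional convergence to joint convergence. First I would fix functions $f_1,\dots,f_k\in\ma(\R^d)$ and observe that for any $(a_1,\dots,a_k)\in\R^k$ one has $\sum_{j=1}^k a_j X_n(f_j)=X_n\bigl(\sum_{j=1}^k a_j f_j\bigr)$ with probability one, because each $X_n$ is a random distributional $1$-form. Since $\sum_j a_j f_j\in\ma(\R^d)$, the right-hand side converges in law by hypothesis, so every linear combination of the coordinates of the vector $(X_n(f_1),\dots,X_n(f_k))$ converges in law to a proper limit. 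By the Cram\'er--Wold device (the marginal convergence supplying the tightness needed to identify a proper limit), the vector $(X_n(f_1),\dots,X_n(f_k))$ converges in law to some probability measure $\nu_{f_1,\dots,f_k}$ on $\R^k$.

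Next I would verify that the family $\{\nu_{f_1,\dots,f_k}\}$ satisfies the Kolmogorov consistency conditions, namely invariance under permutation of the indices and consistency under marginalization. This is automatic: each $\nu_{f_1,\dots,f_k}$ is the weak limit of the genuine joint law of $(X_n(f_1),\dots,X_n(f_k))$, and both properties hold for the pre-limit laws and are preserved under weak limits. The Kolmogorov extension theorem, applied with the (uncountable) index set $\ma(\R^d)$, then produces a probability space $(\Omega,\mf,\P)$ and a family of random variables $X=(X(f))_{f\in\ma(\R^d)}$ whose finite-dimensional distributions are exactly the $\nu_{f_1,\dots,f_k}$.

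It remains to check that $X$ is a random distributional $1$-form and that $X_n\to X$ in law. The convergence is immediate: the one-dimensional marginal of $X$ at any $f$ is, by construction, the weak limit of the law of $X_n(f)$, so $X_n(f)\to X(f)$ in law for every $f$. For the linearity relation, fix $f,g\in\ma(\R^d)$ and $a,b\in\R$. For each $n$ the triple $(X_n(f),X_n(g),X_n(af+bg))$ is supported, with probability one, on the closed plane $\{(x,y,z):z=ax+by\}$, again because $X_n$ is a random distributional $1$-form. By the first step its law converges weakly to $\nu_{f,g,af+bg}$, which by construction is the law of $(X(f),X(g),X(af+bg))$; since being supported on a fixed closed set is preserved under weak convergence (portmanteau), this limiting law is carried by the same plane, giving $X(af+bg)=aX(f)+bX(g)$ with probability one.

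The step requiring the most care is this last one, together with the invocation of the extension theorem on an uncountable index set. The crucial point is that the definition of a random distributional $1$-form only requires the linearity relation to hold almost surely for each fixed triple $(f,g,a,b)$ \emph{separately}, not on a single almost-sure event valid simultaneously for all (uncountably many) triples. This is exactly what makes the construction legitimate: establishing the relation one triple at a time is enough, and that reduces, as above, to the elementary fact that a closed hyperplane carrying every term of a weakly convergent sequence also carries the limit.
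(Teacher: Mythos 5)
Your proposal is correct and follows essentially the same route as the paper's proof: upgrade the one-dimensional convergence to joint convergence via linearity, build $X$ by the Kolmogorov extension theorem on the index set $\ma(\R^d)$, and transfer the almost-sure linearity relation, one triple $(f,g,a,b)$ at a time, to the limit. The paper states these steps tersely, while you supply the justifications it leaves implicit (the tightness-plus-Cram\'er--Wold argument for joint convergence, and the portmanteau/closed-plane argument for linearity of $X$), both of which are exactly right.
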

\begin{proof}
Linearity implies that for any $f_1,\ldots,f_k$, $(X_n(f_1),\ldots,X_n(f_k))$ converges in law. This determines a consistent family of finite dimensional laws on $\R^{\ma(\R^d)}$. By the Kolmogorov consistency theorem, this implies the existence of $X=(X(f))_{f\in \ma(\R^d)}$ such that for any $k$ and any $f_1,\ldots, f_k$, the random vector $(X_n(f_1),\ldots,X_n(f_k))$ converges in law to $(X(f_1),\ldots, X(f_k))$. In particular, since for any given $a,b\in \R$ and $f,g\in \ma(\R^d)$, $X_n(a f+b g) = a X_n(f) + b X_n(g)$ with probability one for each $n$, it follows that $X(\alpha f+\beta g) = \alpha X(f)+\beta X(g)$ with probability one. Thus, $X$ is a random distributional $1$-form and $X_n$ converges in law to $X$ as $n\to\infty$.
\end{proof}

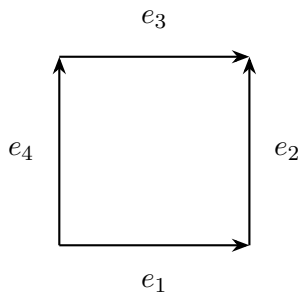
\begin{figure}
\begin{center}
\begin{tikzpicture}[scale = 2.5]
\draw [-Stealth, thick] (0,0) to (1,0);
\draw [-Stealth, thick] (1,0) to (1,1);
\draw [Stealth-, thick] (1,1) to (0,1);
\draw [Stealth-, thick] (0,1) to (0,0);
\node at (.5,-.2) {$e_1$};
\node at (1.2,.5) {$e_2$};
\node at (.5,1.2) {$e_3$};
\node at (-.2,.5) {$e_4$};
\end{tikzpicture}
\caption{A plaquette bounded by four positively oriented edges.\label{plaquette2}}
\end{center}
\end{figure}

Let $\Lambda$, $E$ and $P$ be as in Subsection \ref{lgtsec}, but now {\it without} identifying opposite faces. For a vector $x = (x(e))_{e\in E}\in \R^E$ and a plaquette $p\in P$, we define the number $x(p)$ as follows. 
Let $e_1,e_2,e_3,e_4$ be the edges of $p$, numbered such that the left endpoint of $e_1$ is the smallest vertex of $p$ in the lexicographic ordering, $e_4$ is incident to the left endpoint of $e_1$, and $e_2$ is incident to the right endpoint of $e_1$ (see Figure \ref{plaquette2}, and notice the difference with Figure \ref{plaquette1}). Then we define
\begin{align}\label{xpdef}
x(p) := x(e_1)+x(e_2)-x(e_3)-x(e_4).
\end{align}
\begin{defn}\label{dmfdef}
We define the \emph{discrete Proca field} on $\Lambda$ with parameter $\ve^2$ and free boundary condition to be the Gaussian random vector $X = (X(e))_{e\in E}$ with probability density proportional to 
\[
\exp\biggl(-\frac{1}{2}\sum_{p\in P} x(p)^2 -\frac{\ve^2}{2} \sum_{e\in E} x(e)^2\biggr).
\]
\end{defn}
Note that this is a discretization of the continuum Proca field, with $\lambda$ replaced by $\ve^2$, by the differential geometric representation of the density of the Proca field in equation~\eqref{diffrep}. 

Given $X$, we define a random $1$-form $Y$ as follows. For each $v\in \ve \Z^d$ and each $w$ in the interior of the Voronoi cell of $v$ (i.e., the set of points $x\in \R^d$ such that $v$ is closer to $x$ than any other element of $\ve \Z^d$), and for each $1\le i\le d$, define 
\begin{align}\label{yfromx2}
Y_i(w):= 
\begin{cases}
\ve^{-(d-2)/2}X(\ve^{-1} v, \ve^{-1}v + e_i) &\text{ if } (\ve^{-1}v, \ve^{-1}v + e_i) \in E,\\
0 &\text{ if not.}
\end{cases}
\end{align}
The following theorem shows that $Y$ converges in law to the Euclidean Proca field in a certain scaling limit. In particular, it shows that the Euclidean Proca field exists, thereby proving Lemma \ref{mmfflemma}. We will also need for other purposes later.
\begin{thm}\label{convthm}
Suppose that we take $L\to\infty$ and $\ve \to 0$ along a sequence satisfying $L \ge \ve^{-1-\delta}$ for some $\delta>0$.  Then for any $f\in \ma(\R^d)$,  $Y(f)$ converges in law to a Gaussian random variable with mean zero and variance $(f, R_1f)$, where $R_1$ is the operator defined in equation \eqref{rdef} with $\lambda=1$. Thus, the Euclidean Proca field exists for any $\lambda >0$, and $Y$ converges in law to the Euclidean Proca field with $\lambda = 1$.
\end{thm}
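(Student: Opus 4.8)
The plan is to use Gaussianity to reduce everything to a single variance computation, and then to evaluate that variance by Fourier analysis after neutralizing the free boundary.

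Since the density in Definition \ref{dmfdef} is even in $x$, the vector $X$ is a centered Gaussian, and $Y(f) = \sum_{e\in E} c_e X(e)$ is a fixed linear combination of its coordinates; hence $Y(f)$ is automatically mean-zero Gaussian and it suffices to show that $\Var(Y(f))$ converges to $(f, R_1 f)$. Here $c=(c_e)$ is the discretization of $f$: for $e=(\ve^{-1}v, \ve^{-1}v+e_i)$ with $v\in\ve\Z^d$ we have $c_e=\ve^{-(d-2)/2}\int_{\text{cell}(v)}f_i$, which equals $\ve^{(d+2)/2}f_i(v)$ up to a midpoint error of lower order. Writing $A$ for the precision matrix, i.e. the positive definite matrix with $x^{T}Ax=\sum_{p}x(p)^2+\ve^2\sum_e x(e)^2$, the covariance of $X$ is $A^{-1}$, so $\Var(Y(f))=c^{T}A^{-1}c$. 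One has $A=D^{*}D+\ve^2 I$, where $D$ is the discrete curl $x\mapsto (x(p))_{p}$, and on $\Z^d$ its Fourier symbol is $\hat A(\theta)=(|\mu(\theta)|^2+\ve^2)I-\mu(\theta)\mu(\theta)^{*}$ with $\mu_j(\theta)=e^{\I\theta_j}-1$. Its eigenvalues are $|\mu|^2+\ve^2$ on $\mu^{\perp}$ and exactly $\ve^2$ along $\mu$, so $A\ge\ve^2 I$, and after rescaling $\theta=\ve\xi$ one checks $\ve^{-2}\hat A(\ve\xi)\to (|\xi|^2+1)I-\xi\xi^{T}=\hat Q_1(\xi)$, the symbol of the operator $Q_1$ of Lemma \ref{qlemma}.

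The one genuine difficulty is that with free boundary conditions $A$ is not translation invariant and cannot be diagonalized by plane waves. I would remove the boundary as follows. A contour-shift (Combes--Thomas) estimate shows that both the finite-volume Green's function and the infinite-volume one on $\Z^d$ decay at rate $c\ve$, namely $|(A^{-1})_{e,e'}|\le C\ve^{-2}e^{-c\ve\,\dist(e,e')}$ uniformly in $L$; the rate is $\ve$ rather than the naive gap $\ve^2$ because the dispersion is quadratic, $\hat A(\theta)\approx(|\theta|^2+\ve^2)I$ near the origin, so the correlation length is of order $\ve^{-1}$. Combining this with the geometric resolvent identity comparing $A$ on the box to the operator $A_\infty$ on all of $\Z^d$ (the two agree away from $\partial\Lambda$), one gets $|(A^{-1})_{e,e'}-(A_\infty^{-1})_{e,e'}|\le C e^{-c\ve\,\dist(\{e,e'\},\partial\Lambda)}$. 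Because $f$ has fixed compact support, $c$ is supported within distance $O(\ve^{-1})$ of the origin, whereas $L\ge\ve^{-1-\delta}$ places $\partial\Lambda$ at distance at least $\tfrac12\ve^{-1-\delta}$ from $\supp c$. Summing the pairwise errors, each bounded by $|c_e||c_{e'}|\lesssim\ve^{d+2}$ over the $O(\ve^{-2d})$ relevant pairs, the discrepancy between $c^{T}A^{-1}c$ and $c^{T}A_\infty^{-1}c$ is at most a polynomial in $\ve^{-1}$ times $e^{-c\ve^{-\delta}}$, which tends to $0$; thus it suffices to compute the limit of $c^{T}A_\infty^{-1}c$.

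For the infinite-volume form I pass to Fourier space, $c^{T}A_\infty^{-1}c=\int_{[-\pi,\pi]^d}\hat c(\theta)^{*}\hat A(\theta)^{-1}\hat c(\theta)\,\frac{d\theta}{(2\pi)^d}$, and substitute $\theta=\ve\xi$. Two inputs drive the limit. First, a Poisson-summation (anti-aliasing) estimate gives $\ve^{(d-2)/2}\hat c(\ve\xi)\to\hat f(\xi)$ pointwise, the aliased copies $\hat f(\xi+2\pi k/\ve)$ being negligible since $\hat f$ is rapidly decaying and their arguments are of order $\ve^{-1}$. Second, $\ve^2\hat A(\ve\xi)^{-1}\to\hat R_1(\xi):=\hat Q_1(\xi)^{-1}=\frac{1}{|\xi|^2+1}(I+\xi\xi^{T})$ pointwise, while $A\ge\ve^2 I$ gives the uniform operator-norm bound $\|\ve^2\hat A(\ve\xi)^{-1}\|\le 1$. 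Collecting the powers of $\ve$ (which cancel), the rescaled integrand is dominated by an integrable function coming from the rapid decay of $\hat f$, so the dominated convergence theorem yields $c^{T}A_\infty^{-1}c\to\int_{\R^d}\hat f(\xi)^{*}\hat R_1(\xi)\hat f(\xi)\,\frac{d\xi}{(2\pi)^d}=(f,R_1 f)$, by Parseval and the definition \eqref{rdef} of $R_\lambda$. Hence $\Var(Y(f))\to(f,R_1 f)$ and $Y(f)$ converges in law to the stated Gaussian.

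By Lemma \ref{existlmm}, convergence of $Y(f)$ for every $f$ produces a limiting random distributional $1$-form, which is by definition the Euclidean Proca field with $\lambda=1$; existence for arbitrary $\lambda>0$ then follows from the scaling relation of Lemma \ref{scalinglmm} with $a=\sqrt{\lambda}$. I expect the main obstacle to be the boundary analysis of the second paragraph: unlike on a torus, the free-boundary operator has no exact Fourier diagonalization, so its convergence to the infinite-volume Green's function must be made quantitative, and the whole scheme hinges on the decay rate being of order $\ve$ (matching correlation length $\ve^{-1}$) so that the hypothesis $L\ge\ve^{-1-\delta}$ is strong enough to suppress the boundary corrections. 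The Fourier limit itself is comparatively routine once the uniform bound $\|\ve^2\hat A(\ve\xi)^{-1}\|\le 1$ and the anti-aliasing estimate are in hand.
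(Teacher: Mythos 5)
Your reduction to a single variance computation and your infinite-volume Fourier analysis are fine (the symbol $\hat A(\theta)=(|\mu(\theta)|^2+\ve^2)I-\mu(\theta)\mu(\theta)^*$, the limit $\ve^{-2}\hat A(\ve\xi)\to\hat Q_1(\xi)$, the uniform bound from $A\ge \ve^2 I$, and the anti-aliasing step are all correct). But the boundary-removal step contains a genuine error: the claimed Combes--Thomas bound $|(A^{-1})_{e,e'}|\le C\ve^{-2}e^{-c\ve\,\dist(e,e')}$ is false. The heuristic ``the dispersion is quadratic, so the gap $\ve^2$ upgrades to decay rate $\ve$'' is valid for the scalar operator $-\Delta+\ve^2$, but not for $A=D^*D+\ve^2I$: the curl form $\sum_p x(p)^2$ vanishes identically on discrete gradients, so --- as your own symbol computation shows --- $\hat A(\theta)$ has eigenvalue \emph{exactly} $\ve^2$ along $\mu(\theta)$ for \emph{every} $\theta$, a flat band rather than a quadratic band bottom. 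Consequently $\hat A(\theta)^{-1}=\ve^{-2}P_{\mu(\theta)}+(|\mu(\theta)|^2+\ve^2)^{-1}(I-P_{\mu(\theta)})$ with $P_\mu=\mu\mu^*/|\mu|^2$, and $P_\mu$ is non-analytic at $\theta=0$; its position-space kernel (second discrete derivatives of the lattice Green's function) decays only polynomially, like $\dist(e,e')^{-d}$. So $A_\infty^{-1}$ has tails of size roughly $\ve^{-2}\dist(e,e')^{-d}$, and the best uniform exponential estimate has rate $\sim\ve^2$ per step --- which is exactly what the paper's Lemma \ref{corrlmm} proves, via a Neumann series with $\|S\|\le 1-(16d)^{-1}\ve^2$. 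At the boundary distance permitted by the hypothesis, $L\ge\ve^{-1-\delta}$ with $\delta$ possibly small, one has $e^{-c\ve^2 L}=e^{-c\ve^{1-\delta}}\to 1$ when $\delta<1$, so the exponential suppression of the boundary correction evaporates; and a naive polynomial count ($\|c\|_1^2\approx\ve^{2-d}$ against a Green's-function discrepancy of size roughly $\ve^{-4}L^{-d-1}$) gives an error of order $\ve^{-1+\delta(d+1)}$, which also fails for small $\delta$. Since the theorem is asserted for every $\delta>0$, the second paragraph of your proof does not go through as written.

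It is instructive to see how the paper sidesteps this entirely: it never uses any decay of the Green's function. Writing $Q$ for the (local, finite-range) precision matrix and $R=Q^{-1}$, it evaluates the variance $u^*Ru$ by comparing the discretization $u$ of $f$ with $w\approx z:=Qx$, where $x$ is the smooth discretization of $g$ and $f=Q_1 g$; then $u^*Ru\approx w^*Rz=w^*RQx=w^*x\to (f,g)=(f,R_1f)$. The only spectral input is the crude operator-norm bound $\|R\|\le\ve^{-d}$, and the boundary enters only through the values of the Schwartz forms at physical distance $\ve L\ge \ve^{-\delta}\to\infty$, where rapid decay makes the boundary edges' contribution to $\|z-w\|^2$ negligible; arbitrary $f$ is then recovered from the bijectivity of $Q_1$ on $\ma(\R^d)$ (Lemma \ref{qlemma}). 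In other words, the hypothesis $L\ge\ve^{-1-\delta}$ is exploited through the test form's decay, not through decay of correlations --- which, because of the flat gauge band, is simply not available at the rate your argument requires. To rescue your Fourier route you would have to compare the finite- and infinite-volume quadratic forms through the forward (local) operator rather than through Green's-function decay.
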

\begin{proof}
Take any $g\in \ma(\R^d)$. Throughout this proof, $O(\ve^k)$ will denote any quantity whose magnitude is bounded above by $C\ve^k$ for some constant $C$ that depends only on $g$, $d$ and $\delta$. Let $\nabla g$ denote the gradient of $g$, $Hg$ denote the Hessian of $g$, and $\Delta g$ denote the Laplacian of $g$. Fixing $\ve$ and $L\ge  \ve^{-1-\delta}$, let 
\begin{align}\label{rdefinition}
R(e,e') := \ve^{-(d-2)}\E(X(e) X(e')). 
\end{align}
We will say that two edges $e,e'\in E$ are neighbors if they belong to a common plaquette. If $e$ and $e'$ are neighbors belonging to a common plaquette $p$, then we will say that they are positive neighbors if they are either the first two edges of $p$ or the last two edges of $p$, when the edges are ordered as in the definition \eqref{xpdef} of $x(p)$. Otherwise, we will say that they are negative neighbors. 
Define $Q\in \R^{E\times E}$ as
\begin{align}\label{qdefinition}
Q(e,e') &:= 
\begin{cases}
(2d-2)\ve^{d-2} +\ve^d &\text{ if } e=e',\\
2 \ve^{d-2} &\text{ if $e$ and $e'$ are positive neighbors,}\\
-2\ve^{d-2} &\text{ if $e$ and $e'$ are negative neighbors.}
\end{cases}
\end{align}
It is easy to check that for any $x\in \R^E$,
\begin{align*}
x^*Qx &= \ve^{d-2}\sum_{p\in P} x(p)^2 +\ve^d \sum_{e\in E} x(e)^2.
\end{align*}
This shows that for any $x\in \R^E$,
\begin{align*}
x^*Qx &\ge \ve^d \|x\|^2.
\end{align*}
In particular, $Q$ is invertible, and the smallest eigenvalue of $Q$ is at least $\ve^d$. It is clear that $R = Q^{-1}$. Thus, the largest eigenvalue of $R$ is at most $\ve^{-d}$. 

For $e = (a,a+e_i)\in E$ (where now, and in the following, $e_1,\ldots,e_d$ denote the standard basis vectors of $\R^d$), let $x(e) := g_i(\ve a)$. Let $z := Qx$. Note that 
\begin{align*}
z(e) &= ((2d-2)\ve^{d-2} + \ve^d) x(e) + 2\ve^{d-2}\sum_{e'\in N^+(e)} x(e') - 2\ve^{d-2}\sum_{e'\in N^-(e)} x(e'),
\end{align*}
where $N^+(e)$ is the set of positive neighbors of $e$ and $N^{-}(e)$ is the set of negative neighbors of~$e$. 

Now suppose that $e = (a,a+e_i)$ is not a boundary edge of $\Lambda$. Then the set of positive neighbors of $e$ consists of the edges $(a+e_i, a+e_i + e_j)$ and $(a-e_j, a)$ as $j$ runs over $\{1,\ldots,d\} \setminus\{i\}$. The set negative neighbors of $e$ consists of the edges $(a\pm e_j, a\pm e_j +e_i)$ and the edges $(a, a+e_j)$ and $(a-e_j+e_i, a+e_i)$ for $j\in \{1,\ldots, d\}\setminus\{i\}$. Since $g\in \ma(\R^d)$, all third-order derivatives of the components of $g$ are at most of order $(1+\|\ve a\|)^{-K}$ in a $2\ve$-neighborhood of $\ve a$, where $K$ is an arbitrary positive constant that will be chosen later. Thus, by Taylor expansion, 
\begin{align*}
&\sum_{e'\in N^+(e)} x(e')  = \sum_{j\ne i} (g_j(\ve a+\ve e_i)+ g_j(\ve a-\ve e_j))\\
&= \sum_{j\ne i} \biggl(2g_j(\ve a) + \ve \partial_i g_j(\ve a) - \ve \partial_j g_j(\ve a) + \frac{\ve^2}{2} \partial_i^2 g_j (\ve a) + \frac{\ve^2}{2} \partial_j^2 g_j(\ve a)\biggr) \\
&\qquad \qquad + O(\ve^3(1+\|\ve a\|)^{-K}).
\end{align*}
Similarly, 
\begin{align*}
\sum_{e'\in N^-(e)} x(e')  &= \sum_{j\ne i} (g_i(\ve a+\ve e_j) + g_i(\ve a-\ve e_j) + g_j(\ve a) + g_j(\ve a-\ve e_j+\ve e_i))\\
&= \sum_{j\ne i} \biggl(2g_i(\ve a) + \ve^2\partial_j^2g_i(\ve a) +2g_j(\ve a) +\ve \partial_i g_j(\ve a) - \ve \partial_j g_j(\ve a) \\
&\qquad \qquad \qquad +\frac{\ve^2}{2} \partial_i^2 g_j(\ve a) + \frac{\ve^2}{2}\partial_j^2 g_j(\ve a) - \ve^2 \partial_i\partial_j g_j(\ve a)\biggr) \\
&\qquad \qquad  + O(\ve^3 (1+\|\ve a\|)^{-K}). 
\end{align*}
Thus, we get
\begin{align*}
&(2d-2) x(e) + \sum_{e'\in N^+(e)} x(e') - \sum_{e'\in N^-(e)} x(e')\\
&= \ve^2\sum_{j\ne i}(\partial_i \partial_j g_j(\ve a) - \partial_j^2 g_i(\ve a) ) +O(\ve^3 (1+\|\ve a\|)^{-K})\\
&= \ve^2\sum_{j=1}^d(\partial_i \partial_j g_j(\ve a) - \partial_j^2 g_i(\ve a) ) +O(\ve^3 (1+\|\ve a\|)^{-K}).
\end{align*}
Putting it all together, we see that for any $e = (a,a+e_i)\in E$ that is not a boundary edge, 
\begin{align}\label{zedef}
z(e) = \ve^d g_i(\ve a) - \ve^d \Delta g_i(\ve a) + \ve^d \sum_{j=1}^d \partial_i\partial_j g_j(\ve a) + O(\ve^{d+1}(1+\|\ve a\|)^{-K}). 
\end{align}
Next, define $f := Q_1 g$, where $Q_1$ is the operator defined in equation \eqref{qdef} with $\lambda =1$. For $e = (a,a+e_i)\in E$, define $w(e) := \ve^d f_i(\ve a)$. Then, if $e$ not a boundary edge, \eqref{zedef} shows that
\begin{align*}
z(e)-w(e) &= O(\ve^{d+1}(1+\|\ve a\|)^{-K}). 
\end{align*}
On the other hand, since $g$ and $f$ are Schwartzian $1$-forms, we have the crude estimate 
\[
z(e) -w(e) = O(\ve^{d-2} (\ve L)^{-K})
\]
for any boundary edge $e$. Thus, considering $z$ and $w$ as vectors in $\R^E$, we get
\begin{align}
\|z-w\|^2 &= O\biggl(\sum_{r=0}^L (1+r)^{d-1} \ve^{2d+2} (1+\ve r)^{-2K} + L^{d-1} \ve^{2d-4}(\ve L)^{-2K} \biggr)\notag \\
&=  O\biggl(\sum_{r=0}^{\lfloor 1/\ve\rfloor} (1+r)^{d-1}\ve^{2d+2} + \sum_{r = \lfloor1/\ve \rfloor}^L r^{-2K} \ve^{2d+2-2K} +\ve^{2d-4-2K} L^{d-1-2K}\biggr) \notag \\
&= O(\ve^{d+2}) + O(\ve^{2d+1}) + O(\ve^{2d-4-2K} L^{d-1-2K}),\label{zwineq}
\end{align}
provided that $K > 1/2$. 
Since $L^{-1}\le \ve^{1+\delta}$, we can choose $K$ large enough (depending on $d$ and $\delta$) such that
\[
\ve^{2d-4-2K} L^{d-1-2K} \le \ve^{2d-4-2K} \ve^{(1+\delta)(2K+1-d)}\le \ve^{d+2}.
\]
Thus, with such a choice of $K$, we get 
\begin{align}\label{zwdifference}
\|z-w\|^2 = O(\ve^{d+2}). 
\end{align}
Let $D$ denote the Voronoi cell of $0$ in $\Z^d$, so that the Voronoi cell of $\ve a\in \ve \Z^d$ is $\ve D + \ve a$. Then note that
\begin{align*}
Y(f) 
&=\sum_{e = (a, a+e_i)\in E}  Y_i(\ve a) \int_{\ve D + \ve a}  f_i(q) dq.
\end{align*}
Thus, $Y(f)$ is a Gaussian random variable with mean zero and variance $u^*R u$, 
where $u\in \R^E$ is defined as
\begin{align}\label{udefinition}
u(a,a+e_i) := \int_{\ve D + \ve a} f_i(q) dq.
\end{align}
Since $f\in \ma(\R^d)$, we have that for any $e = (a,a+e_i)\in E$, 
\begin{align*}
u(e)- \vol(\ve D + \ve a) f_i(\ve a) &= \int_{\ve D + \ve a} (f_i (q)-f_i(a)) dq \\
&= O(\ve (1+\|\ve a\|)^{-K}) \vol(\ve D + \ve a).
\end{align*}
But note that
\[
\vol(\ve D + \ve a) = \vol(\ve D) = \ve^d \vol(D) = \ve^d,
\]
where the last identity holds because $\vol(D)=1$ by a simple density argument. Thus, 
\[
u(e)-w(e) = O(\ve^{d+1}(1+\|\ve a\|)^{-K}).
\] 
From this, by the same calculation as in \eqref{zwineq}, we get
\begin{align}\label{uwrel}
\|u-w\|^2 = O(\ve^{d+2}).
\end{align}
Moreover, note that  for any $e = (a,a+e_i)\in E$, $u(e)$ and $w(e)$ are both of order $\ve^d (1+\|\ve a\|)^{-K}$. Thus, 
\begin{align}
\max\{\|u\|^2, \|w\|^2\} &= O\biggl(\sum_{r=0}^\infty (1+r)^{d-1} \ve^{2d} (1+\ve r)^{-2K}\biggr)\notag \\
&= O\biggl(\sum_{r=0}^{\lfloor 1/\ve\rfloor} (1+r)^{d-1} \ve^{2d}  + \sum_{r=\lfloor 1/\ve\rfloor} r^{d-1} \ve^{2d} (\ve r)^{-2K}\biggr)\notag \\
&= O(\ve^d), \label{uwmax}
\end{align}
choosing $K$ large enough. 
Recalling our earlier observation that $\|R\|=O(\ve^{-d})$ (where $\|R\|$ denotes the $\ell^2$ operator norm of $R$), and combining with \eqref{uwrel} and \eqref{uwmax}, we get
\begin{align*}
|u^*Ru - w^*R w| &\le |u^*R(u-w)| + |(u-w)^*Rw|\\
&\le \|u\|\|R\|\|u-w\| + \|u-w\|\|R\|\|w\|\\
&= O(\ve).
\end{align*}
Similarly, by \eqref{zwdifference},
\[
|w^*Rw - w^*Rz| \le \|w\|\|R\|\|w-z\| = O(\ve). 
\]
Combining the last two displays, and observing that $Rz = RQx = x$, we get
\begin{align}\label{uruwx}
|u^*R u - w^*x| = O(\ve).
\end{align}
But notice that
\begin{align*}
w^*x &= \sum_{e = (a,a+e_i)\in E}\ve^d g_i(\ve a) f_i(\ve a).
\end{align*}
If $\ve \to 0$, $L\to\infty$ and $L\ge \ve^{-1-\delta}$, then from the above formula and the fact that $f,g\in \ma(\R^d)$, it follows that $w^*x$ tends to $(f,g)$. But $f=Q_1 g$, and therefore, by Lemma \ref{qlemma}, $g = R_1 f$. Thus, by \eqref{uruwx} and the fact that $Y(f)\sim \mathcal{N}(0, u^*Ru)$, it follows that $Y(f)$ converges in law to $\mathcal{N}(0, (f, R_1 f))$. Since $Q_1$ is a bijection of $\ma(\R^d)$ and $g$ is an arbitrary element of $\ma(\R^d)$, this conclusion holds for all $f\in \ma(\R^d)$. Thus, by Lemma \ref{existlmm}, the Euclidean Proca field with $\lambda = 1$ exists, and $Y$ converges to it in law. The existence for arbitrary $\lambda$ follows from Lemma \ref{scalinglmm}.
\end{proof}

\subsection{Proof of Lemma \ref{scalinglmm}}\label{scalinglmmproof}
Take any $f\in \ma(\R^d)$, and let $g:= \tau_{a,b} f$. Note that 
\begin{align*}
\Delta g(x) &= a^{-2} \Delta f((x-b)/a) = a^{-2} \tau_{a,b} \Delta f(x). 
\end{align*}
Thus,
\begin{align*}
(-\Delta + \lambda I) \tau_{a,b} f &=  -a^{-2}\tau_{a,b} \Delta f + \lambda \tau_{a,b} f\\
&= a^{-2}\tau_{a,b}(-\Delta + a^2\lambda I) f.
\end{align*}
To put it more succintly,
\[
(-\Delta + \lambda I) \tau_{a,b} = a^{-2}\tau_{a,b}(-\Delta + a^2\lambda I). 
\]
Multiplying on the left by $K_\lambda$ throughout and on the right by $K_{a^2\lambda}$ throughout, and applying Lemma~\ref{klemma},  we get
\begin{align*}
\tau_{a,b} K_{a^2\lambda} &= a^{-2}K_\lambda \tau_{a,b}.
\end{align*}
Similarly, since
\[
\partial_i g(x) = a^{-1}\partial_i f((x-b)/a) = a^{-1}\tau_{a,b} \partial_ i f,
\]
we have $\partial_i \tau_{a,b} = a^{-1}\tau_{a,b}\partial_i$. Thus,
\begin{align}
R_\lambda g &:= \sum_{i=1}^d \biggl(K_\lambda \tau_{a,b}f_i - \lambda^{-1}\sum_{j=1}^d \partial_i \partial_j K_\lambda\tau_{a,b} f_j\biggr) dx_i\notag \\
&= a^2 \sum_{i=1}^d \biggl(\tau_{a,b} K_{a^2\lambda} f_i - \lambda^{-1}\sum_{j=1}^d \partial_i\partial_j \tau_{a,b} K_{a^2\lambda} f_j\biggr) dx_i\notag \\
&= a^2 \tau_{a,b}\sum_{i=1}^d \biggl( K_{a^2\lambda} f_i - a^{-2}\lambda^{-1}\sum_{j=1}^d \partial_i\partial_jK_{a^2\lambda} f_j\biggr) dx_i\notag \\
&= a^2\tau_{a,b} R_{a^2\lambda} f. \label{rtranslate}
\end{align}
Now, note that for any $u,v\in \ma(\R^d)$, 
\begin{align*}
(\tau_{a,b} u, \tau_{a,b} v) &= \sum_{i=1}^d \int_{\R^d} u_i((x-b)/a) v_i((x-b)/a) dx\\
&= a^d \sum_{i=1}^d \int_{\R^d} u_i(z)v_i(z) dz = a^d (u, v). 
\end{align*}
From this and \eqref{rtranslate}, we get
\[
(g, R_\lambda g) = a^2(\tau_{a,b}f, \tau_{a,b} R_{a^2\lambda} f) = a^{d+2}(f, R_{a^2\lambda} f).
\]
Now recall that $Y(f) = a^{(d-2)/2} a^{-d} X(g)$. Therefore, $Y$ is a Gaussian random variable with mean zero and variance 
\[
a^{d-2} a^{-2d} (g, R_\lambda g) = (f, R_{a^2\lambda } f). 
\]
This proves that $Y$ is a Euclidean Proca field with parameter $a^2\lambda$. 

\subsection{Proof of Lemma \ref{masslmm}}\label{masslmmproof}
We need the following lemmas.
\begin{lmm}\label{mass1}
As $\|x\|\to \infty$,
\begin{align*}
K_\lambda(x) \sim \frac{\lambda^{(d-3)/4}e^{-\sqrt{\lambda}\|x\|}}{2(2\pi)^{(d-1)/2}\|x\|^{(d-1)/2}},
\end{align*}
meaning that the ratio of the two sides tends to $1$ as $\|x\|\to \infty$.
\end{lmm}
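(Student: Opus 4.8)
The plan is to extract the asymptotics directly from the integral defining $K_\lambda$ by Laplace's method. Write $r := \|x\|$, so that
\[
K_\lambda(x) = (4\pi)^{-d/2}\int_0^\infty t^{-d/2}\exp\biggl(-\frac{r^2}{4t}-\lambda t\biggr)\,dt.
\]
The exponent $\phi(t) := -r^2/(4t) - \lambda t$ attains its maximum over $(0,\infty)$ at $t_\ast := r/(2\sqrt{\lambda})$, where $\phi(t_\ast) = -r\sqrt{\lambda}$; this is precisely the source of the factor $e^{-\sqrt{\lambda}\|x\|}$. As $r\to\infty$ the integrand concentrates near $t_\ast$, and the polynomial prefactor $\lambda^{(d-3)/4}/\|x\|^{(d-1)/2}$ should emerge from the value of $t^{-d/2}$ at $t_\ast$ together with the Gaussian width determined by $\phi''(t_\ast)$.

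First I would freeze the location of the peak by substituting $t = t_\ast s$. Setting $z := r\sqrt{\lambda}$, a direct computation gives $\phi(t_\ast s) = -\tfrac{z}{2}(s + s^{-1})$, and hence
\[
K_\lambda(x) = (4\pi)^{-d/2}\,t_\ast^{\,1-d/2}\int_0^\infty s^{-d/2}\exp\biggl(-\frac{z}{2}\Bigl(s+\frac{1}{s}\Bigr)\biggr)\,ds.
\]
Using $t_\ast = r/(2\sqrt{\lambda})$, the prefactor $(4\pi)^{-d/2}t_\ast^{\,1-d/2}$ simplifies to a constant times $r^{\,1-d/2}\lambda^{(d-2)/4}$, so that the whole problem reduces to the single-variable asymptotics, as $z\to\infty$, of
\[
I(z) := \int_0^\infty s^{-d/2}\exp\biggl(-\frac{z}{2}\Bigl(s+\frac{1}{s}\Bigr)\biggr)\,ds.
\]

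The phase $\psi(s) := s + s^{-1}$ has a unique minimum on $(0,\infty)$ at $s = 1$, with $\psi(1) = 2$ and $\psi''(1) = 2$, while the smooth prefactor $s^{-d/2}$ equals $1$ there. The standard Laplace estimate then gives $I(z) \sim e^{-z}\sqrt{2\pi/z}$ as $z\to\infty$. Substituting $z = r\sqrt{\lambda}$, multiplying by the prefactor computed above, and collecting the powers of $r$ and $\lambda$ together with the numerical constants yields exactly
\[
K_\lambda(x) \sim \frac{\lambda^{(d-3)/4}e^{-\sqrt{\lambda}\,r}}{2(2\pi)^{(d-1)/2}\,r^{(d-1)/2}},
\]
which is the claim. (Equivalently, $I(z) = 2K_{d/2-1}(z)$ is a modified Bessel function and the result follows from its known large-argument asymptotics; I prefer the self-contained Laplace argument.)

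The only step needing genuine care is making the Laplace estimate for $I(z)$ rigorous; everything else is bookkeeping of constants. To prove $I(z)\sim e^{-z}\sqrt{2\pi/z}$ I would split the integral at $|s-1| = z^{-1/4}$. On the central region a second-order Taylor expansion gives $\tfrac{z}{2}\psi(s) = z + \tfrac{z}{2}(s-1)^2 + O(z|s-1|^3)$, with the cubic error uniformly $o(1)$ there, so this part matches the Gaussian integral $e^{-z}\int e^{-z(s-1)^2/2}\,ds \sim e^{-z}\sqrt{2\pi/z}$. On the complement, convexity of $\psi$ forces $\psi(s) - 2 \gtrsim z^{-1/2}$, so that region is suppressed by a factor $e^{-c\sqrt{z}}$ relative to the main term. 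The one subtlety is the behavior as $s\downarrow 0$: for $d\ge 2$ the prefactor $s^{-d/2}$ is not integrable against Lebesgue measure near the origin, so one must retain the $\exp(-z/(2s))$ factor, which decays faster than any power of $s$, to dominate the singularity and confirm that the $s\downarrow 0$ tail is negligible.
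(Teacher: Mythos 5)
Your proposal is correct and takes essentially the same approach as the paper: both are Laplace's method at the saddle $t_\ast = \|x\|/(2\sqrt{\lambda})$. The paper implements it by completing the square, $\frac{\|x\|^2}{4t}+\lambda t = \frac{(\|x\|-2\sqrt{\lambda}\,t)^2}{4t}+\sqrt{\lambda}\,\|x\|$, then substituting $s=(t-t_\ast)/\sqrt{\|x\|}$ and applying dominated convergence over the whole line, which avoids any explicit splitting of the integration range; you instead substitute $t=t_\ast s$ to reach the Bessel-type integral $I(z)=\int_0^\infty s^{-d/2}e^{-\frac{z}{2}(s+s^{-1})}\,ds$ and run a window-splitting Laplace argument. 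Your bookkeeping checks out: $(4\pi)^{-d/2}t_\ast^{\,1-d/2}$ together with $I(z)\sim e^{-z}\sqrt{2\pi/z}$ at $z=\sqrt{\lambda}\,\|x\|$ reproduces exactly the claimed constant $\lambda^{(d-3)/4}/\bigl(2(2\pi)^{(d-1)/2}\bigr)$, the identity $I(z)=2K_{d/2-1}(z)$ is right, and your point about retaining the $e^{-z/(2s)}$ factor near $s=0$ to tame the non-integrable $s^{-d/2}$ singularity is exactly the care that is needed.

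One quantitative slip in your rigor step: with the window $|s-1|\le z^{-1/4}$, the cubic remainder $O(z|s-1|^3)$ is of size up to $z^{1/4}$ at the edge of the window, so it is \emph{not} uniformly $o(1)$ there, contrary to what you assert. This is harmless and fixable in two ways. Either shrink the window to $|s-1|\le z^{-2/5}$ (any $\delta$ with $z\delta^3\to 0$ and $z\delta^2\to\infty$ works; the tail suppression then degrades from $e^{-c\sqrt{z}}$ to $e^{-cz^{1/5}}$, still ample), or keep $z^{-1/4}$ and use the exact identity $\psi(s)-2=(s-1)^2/s$, which shows that for $|s-1|\le \frac{1}{2}$ the exponent is squeezed between constant multiples of $z(s-1)^2$; rescaling $s=1+u/\sqrt{z}$ and applying dominated convergence then finishes the central estimate --- which is, in effect, precisely the paper's argument.
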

\begin{proof}
Note that for any $x$ and $t$,
\begin{align*}
\frac{\|x\|^2}{4t} +\lambda t &= \biggl(\frac{\|x\|}{2\sqrt{t}} - \sqrt{\lambda t}\biggr)^2 + \sqrt{\lambda}\|x\|\\
&= \frac{(\|x\|-2\sqrt{\lambda} t)^2}{4t} + \sqrt{\lambda } \|x\|.
\end{align*}
Thus,
\begin{align*}
K_\lambda(x) &= \int_0^\infty \frac{1}{(4\pi t)^{d/2}} \exp\biggl(-\frac{\|x\|^2}{4t} - \lambda t\biggr) dt\\
&= e^{-\sqrt{\lambda} \|x\|} \int_0^\infty \frac{1}{(4\pi t)^{d/2}} \exp\biggl(- \frac{(\|x\|-2\sqrt{\lambda} t)^2}{4t}\biggr) dt.
\end{align*}
Making the change of variable
\[
s = \frac{1}{\sqrt{\|x\|}} \biggl(t - \frac{\|x\|}{2\sqrt{\lambda}}\biggr)
\]
in the above integral yields
\begin{align*}
&K_\lambda(x) = \frac{e^{-\sqrt{\lambda} \|x\|}}{(4\pi)^{d/2}} \int_{-\frac{1}{2}\sqrt{\frac{\|x\|}{\lambda}}}^\infty \biggl( s\sqrt{\|x\|} + \frac{\|x\|}{2\sqrt{\lambda}}\biggr)^{-d/2} \exp\biggl(-\frac{4\lambda^{3/2}\|x\|s^2}{4s\sqrt{\lambda \|x\|} + 2\|x\|}\biggr)\sqrt{\|x\|} ds\\
&= \frac{e^{-\sqrt{\lambda} \|x\|}}{(4\pi)^{d/2}\|x\|^{(d-1)/2}}\int_{-\frac{1}{2}\sqrt{\frac{\|x\|}{\lambda}}}^\infty \biggl(\frac{ s}{\sqrt{\|x\|}} + \frac{1}{2\sqrt{\lambda}}\biggr)^{-d/2}\exp\biggl(-\frac{4\lambda^{3/2}\|x\|s^2}{4s\sqrt{\lambda \|x\|} + 2\|x\|}\biggr)ds.
\end{align*}
By the dominated convergence theorem, it is easy to show that the above integral converges to
\[
\int_{-\infty}^\infty (2\sqrt{\lambda})^{d/2}e^{-2\lambda^{3/2} s^2}ds = (2\sqrt{\lambda})^{d/2} \sqrt{\frac{2\pi}{4\lambda^{3/2}}}
\]
as $\|x\|\to \infty$. This gives the desired result.
\end{proof}
\begin{cor}\label{masscor}
Take any $y\in \R^d$. Let $x_n$ be a sequence of points in $\R^d$ such that $\|x_n\|\to \infty$ and $\|x_n\|^{-1} x_n \to u\in S^{d-1}$. Then 
\[
\lim_{n\to \infty} \frac{K_\lambda(x_n+y)}{K_\lambda(x_n)} = e^{-\sqrt{\lambda} u\cdot y}. 
\]
\end{cor}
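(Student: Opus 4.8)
The plan is to deduce this directly from the sharp asymptotic in Lemma \ref{mass1}, applied to both the numerator and the denominator. Since $\|x_n\|\to\infty$ and $y$ is fixed, we also have $\|x_n+y\|\to\infty$, so Lemma \ref{mass1} is applicable to both $K_\lambda(x_n)$ and $K_\lambda(x_n+y)$. Taking the ratio of the two asymptotic equivalents, the common constant $\lambda^{(d-3)/4}/(2(2\pi)^{(d-1)/2})$ cancels, leaving
\[
\frac{K_\lambda(x_n+y)}{K_\lambda(x_n)} \sim \exp\bigl(-\sqrt{\lambda}(\|x_n+y\|-\|x_n\|)\bigr)\cdot \biggl(\frac{\|x_n\|}{\|x_n+y\|}\biggr)^{(d-1)/2}.
\]

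The remaining work is to analyze the two factors on the right. First I would expand
\[
\|x_n+y\| = \|x_n\|\sqrt{1 + \frac{2\,x_n\cdot y}{\|x_n\|^2} + \frac{\|y\|^2}{\|x_n\|^2}} = \|x_n\| + \frac{x_n\cdot y}{\|x_n\|} + O(\|x_n\|^{-1}),
\]
using the Taylor expansion of $\sqrt{1+s}$ about $s=0$ (valid since the argument tends to $0$). This gives
\[
\|x_n+y\| - \|x_n\| = \frac{x_n}{\|x_n\|}\cdot y + O(\|x_n\|^{-1}) \longrightarrow u\cdot y,
\]
by the hypothesis $\|x_n\|^{-1}x_n \to u$. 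Hence the exponential factor converges to $e^{-\sqrt{\lambda}\,u\cdot y}$. For the polynomial factor, the same expansion shows $\|x_n+y\|/\|x_n\| \to 1$, so $(\|x_n\|/\|x_n+y\|)^{(d-1)/2}\to 1$.

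Combining the two limits yields the claim. There is no real obstacle here: the only point requiring a small amount of care is that Lemma \ref{mass1} is an asymptotic equivalence (a ratio tending to $1$), so I should phrase the argument in terms of ratios throughout, writing
\[
\frac{K_\lambda(x_n+y)}{K_\lambda(x_n)} = \frac{K_\lambda(x_n+y)}{A(x_n+y)}\cdot\frac{A(x_n)}{K_\lambda(x_n)}\cdot\frac{A(x_n+y)}{A(x_n)},
\]
where $A(x)$ denotes the explicit asymptotic expression from Lemma \ref{mass1}; the first two factors tend to $1$ by Lemma \ref{mass1}, and the third is exactly the product analyzed above. This avoids any issue of multiplying asymptotic relations and makes the passage to the limit rigorous.
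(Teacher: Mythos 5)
Your proposal is correct and follows essentially the same route as the paper: the paper likewise expands $\|x_n+y\|-\|x_n\| = \|x_n\|^{-1}x_n\cdot y + O(\|x_n\|^{-1})$ and then invokes Lemma \ref{mass1} for both numerator and denominator. Your explicit factorization through the asymptotic expression $A(x)$ and the separate treatment of the polynomial prefactor $(\|x_n\|/\|x_n+y\|)^{(d-1)/2}\to 1$ merely spell out details the paper leaves implicit in its closing sentence.
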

\begin{proof}
Note that as $\|x\|\to \infty$ (with $y$ fixed), 
\begin{align*}
\|x+y\| - \|x\| &= \sqrt{\|x\|^2+\|y\|^2 + 2x\cdot y} - \|x\|\\
&= \|x\| \sqrt{ 1+ \frac{\|y\|^2}{\|x\|^2} + \frac{2x\cdot y}{\|x\|^2}} - \|x\|\\
&= \|x\|\biggl(1+  \frac{x\cdot y}{\|x\|^2} + O(\|x\|^{-2})\biggr) - \|x\|\\
&= \frac{1}{\|x\|} x\cdot y + O(\|x\|^{-1}).
\end{align*}
By Lemma \ref{mass1}, this proves the claim.
\end{proof}
\begin{lmm}\label{mass2}
For any two bump functions $f,g\in \ms(\R^d)$ and any sequence $x_n$ such that $\|x_n\|\to \infty$ and $\|x_n\|^{-1} x_n \to u\in S^{d-1}$, 
\[
\lim_{n\to \infty} \frac{\int f(y) K_\lambda g^{x_n}(y) dy}{K_\lambda(x_n)} = \iint f(y)g(y-v) e^{-\sqrt{\lambda}u\cdot v} dv dy = \Psi(f,g,u,\lambda).
\]
\end{lmm}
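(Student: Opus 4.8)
The plan is to expand the operator $K_\lambda$ as a convolution, reduce the quotient to an integral of the ratio $K_\lambda(x_n+v)/K_\lambda(x_n)$ against $f$ and $g$, and then pass to the limit using Corollary~\ref{masscor} together with a dominated convergence argument justified by the sharp asymptotic of Lemma~\ref{mass1}.

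First I would write out, using the definition of $K_\lambda$ as an operator and the change of variable $w = x_n + z$,
\[
K_\lambda g^{x_n}(y) = \int_{\R^d} K_\lambda(y-z)\, g(x_n+z)\,dz = \int_{\R^d} K_\lambda(x_n + y - w)\, g(w)\,dw.
\]
Multiplying by $f(y)$, integrating in $y$, and then substituting $v = y - w$ for fixed $y$ gives
\[
\int f(y)\, K_\lambda g^{x_n}(y)\,dy = \iint f(y)\, g(y-v)\, K_\lambda(x_n+v)\,dv\,dy.
\]
Dividing by $K_\lambda(x_n)$ therefore reduces the claim to showing that
\[
\iint f(y)\,g(y-v)\,\frac{K_\lambda(x_n+v)}{K_\lambda(x_n)}\,dv\,dy \longrightarrow \iint f(y)\,g(y-v)\,e^{-\sqrt{\lambda}\,u\cdot v}\,dv\,dy = \Psi(f,g,u,\lambda).
\]

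For the pointwise limit of the integrand I would invoke Corollary~\ref{masscor}, which gives $K_\lambda(x_n+v)/K_\lambda(x_n) \to e^{-\sqrt{\lambda}\,u\cdot v}$ for each fixed $v$. Since $f$ and $g$ are bump functions, the integrand vanishes unless $y \in \supp f$ and $y - v \in \supp g$; hence $(y,v)$ is confined to a fixed compact set independent of $n$, and in particular $v$ ranges over a bounded set. To interchange limit and integral by dominated convergence, the one thing that must be checked is that $K_\lambda(x_n+v)/K_\lambda(x_n)$ is bounded uniformly in $n$ and in $v$ over this compact set. This is where the precise asymptotic of Lemma~\ref{mass1} is essential: for $\|x\|$ large it yields the two-sided estimate $K_\lambda(x) \asymp e^{-\sqrt{\lambda}\|x\|}\|x\|^{-(d-1)/2}$ up to a factor of two, and combined with the elementary bound $\|x_n+v\| \ge \|x_n\| - \|v\|$ this controls the ratio by a constant multiple of $e^{\sqrt{\lambda}\|v\|}\bigl(\|x_n\|/(\|x_n\|-\|v\|)\bigr)^{(d-1)/2}$, which stays bounded for $v$ in the compact set and $\|x_n\|$ large.

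The main, and essentially only, obstacle is this uniform domination step. The crude upper bound of Lemma~\ref{klmm} is insufficient here: for $d\ge 3$ its exponential rate $\sqrt{\lambda/2}$ is strictly smaller than the true rate $\sqrt{\lambda}$ governing the denominator $K_\lambda(x_n)$, so the resulting bound on the ratio would diverge, and one genuinely needs the matching exponential rate supplied by Lemma~\ref{mass1}. Once the uniform bound is in place, dominated convergence delivers the stated limit, and recognizing the right-hand side as $\Psi(f,g,u,\lambda)$ completes the proof.
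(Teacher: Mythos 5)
Your proposal is correct and follows essentially the same route as the paper's proof: the same convolution expansion and change of variables reducing the quotient to $\iint f(y)g(y-v)\,K_\lambda(x_n+v)/K_\lambda(x_n)\,dv\,dy$, the pointwise limit via Corollary \ref{masscor}, the compact-support localization of $v$, and the uniform two-sided control of the ratio via the sharp asymptotic of Lemma \ref{mass1} (the paper's bounds \eqref{kybound} and \eqref{masseq3}) to justify dominated convergence. Your observation that the crude bound of Lemma \ref{klmm} would fail here, because its exponential rate $\sqrt{\lambda/2}$ cannot match the true rate $\sqrt{\lambda}$ in the denominator, is a correct and worthwhile remark that the paper leaves implicit.
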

\begin{proof}
Note that for any $x$,
\begin{align}\label{masseq1}
\int f(y) K_\lambda g^x(y) dy &= \iint f(y) K_\lambda(z) g^x(y-z) dz dy\notag \\
&=  \iint f(y) K_\lambda(z) g(x+y-z) dz dy\notag \\
&=  \iint f(y) K_\lambda(v+x) g(y-v) dv dy.
\end{align}
By Corollary \ref{masscor}, for any fixed $v$,
\begin{align}\label{masseq2}
\lim_{n\to \infty}\frac{K_\lambda(v+x_n)}{K_\lambda(x_n)} = e^{-\sqrt{\lambda} u\cdot v}. 
\end{align}
Since $f$ and $g$ are bump functions, there is some $C$ such that if $\|y\|>C$ or $\|y-v\|> C$, then $f(y)g(y-v)=0$. But if both of these quantities are $\le C$, then $\|v\|\le 2C$. Take any such $v$. Now, Lemma \ref{mass1} implies that there are positive constants $B$, $C_1$ and $C_2$ such that for any $y$ with $\|y\|\ge B/2$, we have
\begin{align}\label{kybound}
C_1 \le \frac{K_\lambda(y)}{\|y\|^{-(d-1)/2} e^{-\sqrt{\lambda}\|y\|}}\le C_2.
\end{align}
Without loss, we may assume that $B\ge 4C$. Then, for any $n$ is so large that $\|x_n\|\ge B$, we have $\|v+x_n\|\ge \|x_n\|-\|v\|\ge B - 2C\ge B/2$. Thus, \eqref{kybound} holds for $y=x_n$ and $y=x_n+v$. By the calculation in the proof of Corollary \ref{masscor}, this implies that there are positive constants $C_3$ and $C_4$ such that for any $n$ satisfying $\|x_n\|\ge B$ and any $v$ such that $\|v\|\le 2C$, we have
\begin{align}\label{masseq3}
C_3 \le \frac{K_\lambda(v+x_n)}{K_\lambda(x_n)} \le C_4. 
\end{align}
By \eqref{masseq1}, \eqref{masseq2}, \eqref{masseq3} and the dominated convergence theorem, we get the desired result.
\end{proof}

We are now ready to complete the proof of Lemma \ref{masslmm}. Take any two bump functions $f$ and $g$ in $\ma(\R^d)$, and let $X$ be a Euclidean Proca field with parameter $\lambda$. Then note that for any $x$,
\begin{align*}
\var(X(f+g^x)) &= (f+g^x, R_\lambda(f+g^x)) \\
&= (f, R_\lambda f) + (g^x, R_\lambda g^x) + (f, R_\lambda g^x) + (g^x, R_\lambda f).
\end{align*}
On the other hand, since $X(f)$ and $X(f^x)$ have mean zero,
\begin{align*}
\var(X(f+g^x)) &= \var(X(f)+X(g^x))\\
&= \var(X(f)) + \var(X(g^x)) + 2\E(X(f)X(g^x))\\
&= (f, R_\lambda f) + (g^x, R_\lambda g^x) + 2\E(X(f)X(g^x)).
\end{align*}
Thus, 
\begin{align*}
\E(X(f)X(g^x)) &= \frac{1}{2}((f, R_\lambda g^x) + (g^x, R_\lambda f)) = (f,R_\lambda g^x),
\end{align*}
where the last identity holds because, by a simple verification, $R_\lambda$ is a self-adjoint operator for the $L^2$ inner product on $\ma(\R^d)$. Now note that
\begin{align*}
(f, R_\lambda g^x) &= \sum_{i=1}^d \iint \biggl(f_i(y)K_\lambda g_i^x(y) - \lambda^{-1}\sum_{j=1}^d f_i(y) \partial_i \partial_j K_\lambda g_j^x(y)\biggr) dy\\
&=  \sum_{i=1}^d \iint \biggl(f_i(y)K_\lambda g_i^x(y) + \lambda^{-1}\sum_{j=1}^d \partial_i f_i(y) K_\lambda (\partial_j g_j)^x(y)\biggr) dy,
\end{align*}
where the second identity follows by integration by parts and the fact that $K_\lambda$, derivatives and translations commute. By Lemma \ref{mass1} and Lemma \ref{mass2}, this completes the proof.

\subsection{Some properties of the discrete Proca field}
In this subsection, we {\it do not} identify opposite faces of $\Lambda$. Recall the discrete Proca field defined in Definition \ref{dmfdef} of Subsection \ref{mmffproof}. In this subsection, we will prove some properties of this field that will be useful later. Define a graph structure on $E$ by declaring that two edges are neighbors if they share a common plaquette. Let $d(e,e')$ denote the graph distance between two edges $e$ and $e'$ on this graph.

\begin{lmm}\label{corrlmm}
Let $X$ be a discrete Proca field on $\Lambda = \{-L,\ldots,L\}^d$ with parameter $\ve^2 \le 1$. 
Then for any $e,e'\in E$,  
\[
|\E(X(e)X(e'))|\le  \ve^{-2} (1-(16d)^{-1}\ve^2)^{d(e,e')}.
\]
\end{lmm}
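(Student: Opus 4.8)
The plan is to read off the precision (inverse-covariance) matrix of the Gaussian vector $X$ and establish entrywise exponential decay of its inverse by a Neumann-series argument that exploits the finite range of the precision matrix on the edge graph.

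First I would record that the density in Definition~\ref{dmfdef} is proportional to $\exp(-\tfrac12 x^*Ax)$, where $A\in\R^{E\times E}$ is the symmetric matrix determined by the quadratic form $x^*Ax=\sum_{p\in P}x(p)^2+\ve^2\sum_{e\in E}x(e)^2$; consequently $\E(X(e)X(e'))=(A^{-1})(e,e')$, so it suffices to bound the entries of $A^{-1}$. The crucial structural fact is that $A(e,e')=0$ whenever $d(e,e')\ge 2$, since each $x(p)$ couples only edges sharing a plaquette. The next step is two eigenvalue estimates. The lower bound $\lambda_{\min}(A)\ge\ve^2$ is immediate from $x^*Ax\ge\ve^2\|x\|^2$. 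For the upper bound, Cauchy--Schwarz gives $x(p)^2\le 4\sum_{e\in p}x(e)^2$, and since every edge lies in at most $2(d-1)$ plaquettes, summing over $p$ yields $\sum_{p}x(p)^2\le 8(d-1)\|x\|^2$; hence $\lambda_{\max}(A)\le 8(d-1)+\ve^2\le 16d$ for $\ve\le 1$ and $d\ge 2$.

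Then I would set $s:=16d$ (any fixed upper bound for $\lambda_{\max}(A)$ serves) and $T:=I-s^{-1}A$. Since $s\ge\lambda_{\max}(A)$, the matrix $T$ is positive semidefinite with operator norm $\|T\|=1-\lambda_{\min}(A)/s\le 1-\ve^2/(16d)<1$, so $A^{-1}=s^{-1}\sum_{k\ge 0}T^k$. Because $T$ inherits the finite range of $A$ (it couples only edges at graph distance $\le 1$), one has $(T^k)(e,e')=0$ whenever $k<d(e,e')$; combined with the elementary bound $|(T^k)(e,e')|\le\|T^k\|\le\|T\|^k$ and the geometric tail, this gives
\[
|(A^{-1})(e,e')|\le s^{-1}\sum_{k\ge d(e,e')}\|T\|^k=\frac{\|T\|^{d(e,e')}}{s\,(1-\|T\|)}=\frac{\|T\|^{d(e,e')}}{\lambda_{\min}(A)}\le\ve^{-2}\Bigl(1-\tfrac{\ve^2}{16d}\Bigr)^{d(e,e')},
\]
which is the claimed bound; here the prefactor collapses to exactly $\ve^{-2}$ because $s(1-\|T\|)=\lambda_{\min}(A)\ge\ve^2$.

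The argument is essentially mechanical once this representation is in place. The one step that deserves genuine care is the combination of the spectral decay $\|T^k\|\le\|T\|^k$ with the support constraint $(T^k)(e,e')=0$ for $k<d(e,e')$: this is precisely what upgrades a mere operator-norm (spectral-gap) estimate into \emph{entrywise} exponential decay, and is the heart of the proof. The only other bookkeeping is the plaquette count in the upper eigenvalue bound, where I would verify that the per-edge plaquette multiplicity is at most $2(d-1)$ (it is smaller for boundary edges, so the bound is uniform), ensuring the decay rate is no worse than $1-\ve^2/(16d)$.
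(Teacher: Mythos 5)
Your proof is correct and follows essentially the same route as the paper: both invert the precision matrix via the Neumann series $A^{-1}=s^{-1}\sum_{k\ge0}T^k$ with $T=I-s^{-1}A$, combine the spectral bounds $\lambda_{\min}(A)\ge\ve^2$ and $\lambda_{\max}(A)\le 16d$ with the finite-range property $(T^k)(e,e')=0$ for $k<d(e,e')$, and bound entries by operator norms. The only cosmetic differences are that you work with the unscaled precision matrix (the paper's $Q$ is $\ve^{d-2}$ times your $A$) and obtain the upper eigenvalue bound by Cauchy--Schwarz on the quadratic form instead of the paper's row-sum (Gershgorin-type) estimate; both yield the same constant.
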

\begin{proof}
Let $R$ denote the covariance matrix of $\ve^{-(d-2)/2}X$, and recall that $R$ is the inverse of the matrix $Q$ defined in \eqref{qdefinition}. Notice that the sum of the absolute values of the entries on any row of $Q$ is $\le 16(d-1)\ve^{d-2} + \ve^d\le 16d \ve^{d-2}$. By a standard fact about real symmetric matrices, this implies that largest eigenvalue of $Q$ is $\le 16d \ve^{d-2}$. On the other hand, as observed in the proof of Theorem \ref{convthm}, the smallest eigenvalue of $Q$ is $\ge \ve^d$. Thus, if we define
\begin{align*}
S := I - (16d \ve^{d-2})^{-1} Q,
\end{align*}
then $S$ is a real symmetric matrix whose eigenvalues lie between $0$ and $1-(16d)^{-1}\ve^2$. In particular, $\|S\|\le 1-(16d)^{-1}\ve^2 < 1$, where $\|S\|$ denotes the $\ell^2$ operator norm of $S$. Consequently, we can expand
\[
(I-S)^{-1} = \sum_{k=0}^\infty S^k. 
\]
But $(I-S)^{-1} = 16d \ve^{d-2} R$. Thus,
\[
R = (16d\ve^{d-2})^{-1}\sum_{k=0}^\infty S^k. 
\]
Now, note that $S(e,e') =0$ unless $e$ and $e'$ are either the same edge, or share a plaquette. Thus, $S^k(e,e') = 0$ unless $d(e,e')\le k$. On the other hand, for any $e$ and $e'$,
\[
|S^k(e,e')| \le \|S\|^k \le (1-(16d)^{-1}\ve^2)^k.
\]
Combining, we get that for any $e$ and $e'$,
\begin{align*}
|R(e,e')| &\le (16d\ve^{d-2})^{-1}\sum_{k=d(e,e')}^\infty  (1-(16d)^{-1}\ve^2)^k \\
&= (16d\ve^{d-2})^{-1}\frac{(1-(16d)^{-1}\ve^2)^{d(e,e')}}{(16d)^{-1}\ve^2} = \ve^{-d} (1-(16d)^{-1}\ve^2)^{d(e,e')}. 
\end{align*}
This completes the proof of the lemma.
\end{proof}

Let $\partial E$ denote the set of boundary edges of $\Lambda$. Given a vector $x=(x_e)_{e\in \partial E}\in \R^{\partial E}$, let $X'$ be generated from the discrete Proca field on $\Lambda$ with parameter $\ve^2$, conditional on the boundary values being equal to components of $x$. Define a random $1$-form $Y'$ using $X'$ just as we defined $Y$ using $X$ in equation \eqref{yfromx2}. In the following, for an edge $e = (a,a+e_i)$, we will write $Y(e)$ and $Y'(e)$ as shorthands for $Y_i(\ve a)$ and $Y_i'(\ve a)$. 
\begin{lmm}\label{normallmm}
Let $Y'$ be as above and suppose that $L\ge \ve^{-3}\ge 1$. Then for any $f\in \ma(\R^d)$, and any choice of $B>0$, there exists $C$ depending only on $f$, $d$ and $B$ (and not on $L$ or $\ve$), such that $|\E(Y'(f)) - \E(Y(f))| \le CL^{-B} \|x\|$  and $|\var(Y'(f)) - \var(Y(f))| \le CL^{-B}$. 
\end{lmm}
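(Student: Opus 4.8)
The plan is to realize both $Y(f)$ and $Y'(f)$ as explicit linear functionals of the underlying Gaussian vectors and then compare their means and variances using the Gaussian conditioning formulas together with the exponential correlation decay of Lemma~\ref{corrlmm}. Recall from the proof of Theorem~\ref{convthm} that $Y(f) = \ve^{-(d-2)/2} u^* X$, where $u\in\R^E$ is the vector of \eqref{udefinition}; thus $\E(Y(f)) = 0$ and $\var(Y(f)) = \ve^{-(d-2)} u^* R u$ with $R = Q^{-1}$. Split $E = I\sqcup \partial E$ into interior and boundary edges, write $u = (u_I, u_\partial)$ and $Q = \begin{pmatrix} Q_{II} & Q_{I\partial} \\ Q_{\partial I} & Q_{\partial\partial}\end{pmatrix}$. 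Since $X'$ is the field conditioned on $X_{\partial E} = x$, the standard Gaussian conditioning formulas give $\E(X'_I) = -Q_{II}^{-1} Q_{I\partial} x$ and $\Cov(X'_I) = Q_{II}^{-1}$, the boundary coordinates being frozen at $x$; hence
\begin{align*}
\E(Y'(f)) &= \ve^{-(d-2)/2}\bigl(u_\partial^* x - u_I^* Q_{II}^{-1} Q_{I\partial} x\bigr), \\
\var(Y'(f)) &= \ve^{-(d-2)}\, u_I^* Q_{II}^{-1} u_I.
\end{align*}
As $\E(Y(f)) = 0$, the first claimed bound is simply a bound on $|\E(Y'(f))|$.

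Everything is driven by two decay inputs. First, the Schwartz decay of $u$: for $e = (a,a+e_i)$ and any $K$, $|u(e)|\le C_K \ve^d (1+\|\ve a\|)^{-K}$, so $u$ restricted to edges with $\|a\|_\infty > L/2$ — in particular all of $u_\partial$ — has super-polynomially small $\ell^1$- and $\ell^2$-mass in $L$, using $\ve L \ge L^{2/3}$ (which follows from $L\ge \ve^{-3}$). Second, the correlation decay $|R(e,e')|\le \ve^{-d}(1-(16d)^{-1}\ve^2)^{d(e,e')}$ of Lemma~\ref{corrlmm}, whose proof applies verbatim to the principal submatrix $Q_{II}$ and gives the same bound for the Dirichlet Green's function $Q_{II}^{-1}$. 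Since graph distance dominates lattice distance, $d(e,e')\ge c\|a(e)-a(e')\|_\infty$, so for $e$ near the origin ($\|a(e)\|_\infty\le L/2$) and $e'$ near the boundary one has $d(e,e')\ge cL/2$, and $L\ge \ve^{-3}$ (giving $\ve^2 L\ge L^{1/3}$) makes $(1-(16d)^{-1}\ve^2)^{cL/2}\le e^{-c'L^{1/3}}$ super-polynomially small. I will also record the crude norm bounds $\|R\|,\|Q_{II}^{-1}\|\le \ve^{-d}$, $\|u_I\| = O(\ve^{d/2})$ from \eqref{uwmax}, and $\|R_{\partial\partial}^{-1}\|\le 16d\,\ve^{d-2}$ — the last by eigenvalue interlacing, since $\lambda_{\min}(R_{\partial\partial})\ge \lambda_{\min}(R) = \|Q\|^{-1}\ge (16d\,\ve^{d-2})^{-1}$. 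Crucially $\ve^{-1}\le L^{1/3}$, so every negative power of $\ve$ is a fixed power of $L$ and is harmless against super-polynomial decay.

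For the mean, set $b := -Q_{I\partial} x$, supported on interior edges adjacent to $\partial E$, with $\|b\|_1\le CL^{d-1}\ve^{d-2}\|x\|$, so that $\E(Y'(f)) = \ve^{-(d-2)/2}\bigl(u_\partial^* x + (Q_{II}^{-1} u_I)^* b\bigr)$. The term $u_\partial^* x$ is super-polynomially small times $\|x\|$ by the first input. For $\sum_{e,e'} u(e)(Q_{II}^{-1})(e,e') b(e')$ I split the inner edge $e$ into the regions $\|a(e)\|_\infty\le L/2$, where the Green's function factor is super-polynomially small by the second input (as $e'\in\supp b$ is near the boundary), and $\|a(e)\|_\infty > L/2$, where $|u(e)|$ is super-polynomially small; bounding the remaining factors crudely by $\|u\|_1 = O(1)$, $\|Q_{II}^{-1}\|\le\ve^{-d}$ and $\|b\|_1$ and absorbing the polynomial powers of $\ve^{-1}$ yields $|\E(Y'(f))|\le CL^{-B}\|x\|$. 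For the variance I use the Schur-complement identity $Q_{II}^{-1} = R_{II} - R_{I\partial} R_{\partial\partial}^{-1} R_{\partial I}$ to get
\[
\var(Y(f)) - \var(Y'(f)) = \ve^{-(d-2)}\bigl(2\,r^* u_\partial + u_\partial^* R_{\partial\partial} u_\partial + r^* R_{\partial\partial}^{-1} r\bigr), \qquad r := R_{\partial I} u_I.
\]
By the same near/far split (now using the decay of $R$ itself), $r$ has super-polynomially small $\ell^2$-norm; combined with the smallness of $u_\partial$ and the norm bounds above, each of the three terms is a fixed power of $L$ times a super-polynomially small quantity, and multiplying by $\ve^{-(d-2)}\le L^{(d-2)/3}$ gives $|\var(Y(f))-\var(Y'(f))|\le CL^{-B}$.

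The hard part, and the reason the crude estimates alone do not suffice, is precisely these boundary-coupling quantities $r = R_{\partial I} u_I$, the paired $(Q_{II}^{-1} u_I)^* b$, and $u_\partial$: their operator-norm bounds blow up like powers of $\ve^{-1}$ and must be beaten by genuine decay. The whole argument hinges on combining the exponential decay of the free and Dirichlet lattice Green's functions with the Schwartz decay of the test function through the near-origin/far-from-origin dichotomy, and on checking that the hypothesis $L\ge \ve^{-3}$ forces both the geometric factor $e^{-c\ve^2 L}$ and the polynomial tail $(\ve L)^{-K}$ to decay faster than any power of $L$.
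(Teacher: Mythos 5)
Your proof is correct, and it reaches the same estimates as the paper through a dual parametrization of the Gaussian conditioning. The paper works entirely on the covariance side: it writes $\E(Y'(e)) = q(e)^*S^{-1}x$ and $\cov(Y'(e),Y'(e')) = \cov(Y(e),Y(e')) - q(e)^*S^{-1}q(e')$ with $q(e) = (R(e,e'))_{e'\in\partial E}$ and $S = R_{\partial\partial}$, so the only decay input it needs is the entrywise bound on $R$ from Lemma~\ref{corrlmm} (packaged as $\|q(e)\|^2 \le CL^{d-1}\ve^{-2d}e^{-c\ve^2 r}$ for $e$ at distance $r$ from the boundary), together with the operator-norm bound $\|S^{-1}\| = O(\ve^{d-2})$ obtained by exactly your interlacing argument; it then sums directly over edges, which is your near/far dichotomy integrated out. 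You instead condition via the precision matrix, $\E(X_I') = -Q_{II}^{-1}Q_{I\partial}x$, $\Cov(X_I') = Q_{II}^{-1}$, and compare variances through the Schur-complement identity $Q_{II}^{-1} = R_{II} - R_{I\partial}R_{\partial\partial}^{-1}R_{\partial I}$, yielding the tidy deficit $2r^*u_\partial + u_\partial^*R_{\partial\partial}u_\partial + r^*R_{\partial\partial}^{-1}r$ with $r = R_{\partial I}u_I$. The price of your route is one extra fact the paper never needs: entrywise exponential decay of the Dirichlet Green's function $Q_{II}^{-1}$, which you correctly observe follows verbatim from the proof of Lemma~\ref{corrlmm} (row sums of a principal submatrix only shrink, the smallest eigenvalue interlaces above $\ve^d$, the sparsity pattern restricts, and the full-graph distance lower-bounds the Neumann-series support). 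What it buys is a cleaner algebraic bookkeeping of the boundary coupling, at the cost of that auxiliary verification; the paper's covariance-side formulation avoids it entirely, needing only norm control of $S^{-1}$. All the genuinely quantitative content --- Schwartz decay of $u$, the bound of Lemma~\ref{corrlmm}, and the observation that $L \ge \ve^{-3}$ makes both $e^{-c\ve^2L} \le e^{-cL^{1/3}}$ and $(\ve L)^{-K} \le L^{-2K/3}$ beat every fixed power of $L$ and of $\ve^{-1}$ --- is identical in the two arguments, and your crude $\|b\|_1 \le CL^{d-1}\ve^{d-2}\|x\|$, while weaker than the Cauchy--Schwarz bound $CL^{(d-1)/2}\ve^{d-2}\|x\|$, is harmless for the same reason.
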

\begin{proof} 
Let $E^\circ:=E \setminus \partial E$. Let $R$ be defined as in \eqref{rdefinition} and $u$ be defined as in \eqref{udefinition}, so that $\var(Y(f))=u^*Ru$, as observed in the proof of Theorem \ref{convthm}. Also, it is clear that $\E(Y(f))=0$. Let $S := (R(e,e'))_{e,e'\in \partial E}$, and for any $e \in E$, let $q(e) := (R(e,e'))_{e'\in \partial E}$. Note that by Lemma \ref{corrlmm}, if an edge $e$ is at an $\ell^\infty$ distance $r$ from the boundary, then
\begin{align}\label{qbbound}
\|q(e)\|^2 &\le |\partial E| \ve^{-2d} e^{-C\ve^2 r} \le C_1L^{d-1} \ve^{-2d} e^{-C_2 \ve^2 r},
\end{align}
where $C$, $C_1$ and $C_2$ depend only on $d$. 
Since $f\in \ma(\R^d)$, for an edge $e$ at $\ell^\infty$ distance $r$ from the origin,
\begin{align}\label{uebound}
|u(e)| \le C\ve^d (1+\ve r)^{-A}
\end{align}
for any choice of $A>0$, where $C$ depends only on $f$, $d$ and $A$. In the following, $C,C_1,C_2,\ldots$ will denote positive constants whose values depend only on $d$, $f$, and the choice of $A$, and may change from line to line. First, note that by \eqref{qbbound} and \eqref{uebound},
\begin{align}\label{sum1}
\sum_{e\in E^\circ} |u(e)|\|q(e)\| &\le  C_1L^{(d-1)/2} \sum_{r=0}^L e^{-C_2 \ve^2 (L-r)}(1+\ve r)^{-A}\notag\\
&\le C_1L^{(d-1)/2} \biggl(\sum_{r\le L/2} e^{-C_2 \ve^2 L/2} + \sum_{L/2< r\le L} (\ve L/2)^{-A}\biggr) \notag\\
&\le C_3 L^{(d+1)/2} (e^{-C_4 \ve^2 L} + \ve^{-A} L^{-A}). 
\end{align}
Similarly,
\begin{align}\label{sum2}
\sum_{e\in \partial E} |u(e)| &\le C_1 L^{d-1 - A}\ve^{d-A}  
\end{align}
and
\begin{align}\label{sum3}
\sum_{e\in E^\circ} |u(e)| &\le C_1\ve^d  \sum_{r=0}^L (1+r)^{d-1}(1+\ve r)^{-A}\notag\\
&\le  C_1\ve^d  \biggl(\sum_{0\le r\le 1/\ve} (1+r)^{d-1} + \sum_{ 1/\ve\le r\le L} r^{d-1}(\ve r)^{-A} \biggr) \notag\\
&\le C_1\ve^d \ve^{-d} + C_2 \ve^d\ve^{-A} \ve^{A-d} \le C_3, 
\end{align}
provided that $A>d$. 
Now, by standard facts about multivariate Gaussian distributions, we have that for $e,e'\in E^\circ$, 
\begin{align}\label{expvar}
\E(Y'(e)) = q(e)^* S^{-1} x, \ \ \ \cov(Y'(e), Y'(e')) = \cov(Y(e), Y(e')) - q(e)^*S^{-1}q(e').
\end{align}
On the other hand, if $e\in \partial E$, then $Y'(e)=x(e)$, and if either of $e$ and $e'$ is in $\partial E$, then $\cov(Y(e), Y(e'))=0$. 
Thus,
\begin{align}\label{ebound}
|\E(Y(f)) - \E(Y'(f))| &\le \sum_{e\in E} |\E(Y'(e)) u(e)| \notag\\
&= \sum_{e\in E^\circ} |u(e) q(e)^*S^{-1} x| + \sum_{e\in \partial E} |x(e) u(e)|,
\end{align}
and 
\begin{align}\label{vbound}
|\var(Y(f)) - \var(Y'(f))| &\le \sum_{e,e'\in \partial E} |\cov(Y(e),Y(e')) u(e) u(e')| \notag \\
&\qquad + 2\sum_{e\in \partial E, \, e'\in E^\circ}  |\cov(Y(e),Y(e')) u(e) u(e')|\notag \\
&\qquad + \sum_{e,e'\in E^\circ} |u(e)u(e') q(e)^*S^{-1} q(e')|.
\end{align}
Since $S$ is a principal submatrix of the real symmetric matrix $R$, the smallest eigenvalue of $S$ is bounded below by the smallest eigenvalue of $R$. Thus, the largest eigenvalue of $S^{-1}$ is bounded above by the largest eigenvalue of $Q=R^{-1}$, which is at most of order $\ve^{d-2}$ (since that is the order of the  maximum of the row sums of the absolute values of the entries of $Q$). Since $S$ is a positive semidefinite matrix, this shows that the $\ell^2$ operator norm of $S^{-1}$ is at most $O(\ve^{d-2})$. By \eqref{sum1}, \eqref{sum2} and \eqref{ebound}, this proves that 
\begin{align*}
|\E(Y(f)) - \E(Y'(f))| &\le\ve^{d-2} \|x\| \sum_{e\in E^\circ} |u(e)| \|q(e)\| + \|x\| \sum_{e\in \partial E} |u(e)| \\
&\le C_1 L^{(d+1)/2} \ve^{d-2}\|x\| (e^{-C_2\ve^2 L} +\ve^{-A}L^{-A}) + C_3 \|x\| L^{d-1-A}\ve^{d-A}.
\end{align*}
If $L\ge \ve^{-3}$, then with a suitably large choice of $A$ (depending only on $f$, $d$ and $B$), the right side above can be made smaller than $CL^{-B}\|x\|$, where $C$, too, depends only $f$, $d$ and $B$. Similarly, by \eqref{uebound}, \eqref{sum1}, \eqref{vbound} and Lemma \ref{corrlmm},
\begin{align*}
&|\var(Y(f)) - \var(Y'(f))| \\
&\le 3 \sum_{e\in \partial E, \, e'\in E} |u(e)u(e') \cov(Y(e), Y(e'))| + \ve^{d-2}\biggl(\sum_{e\in E^\circ} |u(e)|\|q(e)\|\biggr)^2\\
&\le C_1 L^{d-1} \ve^d \sum_{r=0}^L (1+r)^{d-1} (1+\ve L)^{-A} (1+\ve r)^{-A} e^{-C_2\ve^2(L-r)}\\
&\qquad \qquad + C_3 L^{d+1} \ve^{d-2}(e^{-C_4 \ve^2 L} + \ve^{-2A} L^{-2A})\\
&\le C_1 L^{d-1}\ve^{d}(L^d e^{-C_2\ve^2 L} + L^{d-2A} \ve^{-2A}) + C_3 L^{d+1} \ve^{d-2}(e^{-C_4 \ve^2 L} + \ve^{-2A} L^{-2A}).
\end{align*}
Again, if $L\ge \ve^{-3}$, then with a large enough choice of $A$ (depending only on $f$, $d$ and $B$), this bound can be made smaller than $CL^{-B}$. 
\end{proof}

\section{Proofs of the main results}
In this section, we prove the main results --- that is, Theorem \ref{uonethm} and Theorem \ref{sutwothm}. The proofs are quite similar, so we first work out the $\sutwo$ case in complete detail, and then the $\uone$ case is worked out by analogous arguments where the complete details are sometimes omitted.
\subsection{Probability density of stereographic projection}\label{stereosec}
Consider the following map from $\R^n$ to the unit sphere $S^n$ in $\R^{n+1}$. For $x\in \R^n$, let $P_n(x)$ denote the unique point on the line joining $(1,x)\in \R^{n+1}$ to the point $-e_1 =(-1,0,\ldots,0)\in S^{n}$.  It is easy to check by explicit calculation that
\[
P_n(x) = \biggl(\frac{4-\|x\|^2}{4+\|x\|^2}, \frac{4x_1}{4+\|x\|^2}, \ldots,\frac{4x_n}{4+\|x\|^2}\biggr),
\]
because $\|P_n(x)\|=1$ and $P_n(x)$ is a convex combination of $(1,x)$ and $-e_1$. Note that $P_n$ is simply the inverse of the stereographic projection map $\sigma_n$ defined in Subsection \ref{scale1} --- see Figure~\ref{stereo}.
\begin{lmm}\label{stereolmm}
Consider the probability measure $\mu$ on $\R^n$ that has probability density (with respect to Lebesgue measure) proportional to $(4+\|x\|^2)^{-n}$. The pushforward of this measure under $P_n$ is the uniform probability measure on $S^n$. 
\end{lmm}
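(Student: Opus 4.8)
The plan is to identify the pushforward measure $\nu := (P_n)_*\mu$ by testing it against bounded functions and reducing everything to the Jacobian of the parametrization $P_n$ of $S^n$. Since $P_n$ is a bijection of $\R^n$ onto $S^n\setminus\{-e_1\}$ and the omitted point $-e_1$ carries zero surface measure, it suffices to show that the $n$-dimensional surface measure $\omega$ on $S^n$ pulls back under $P_n$ to a constant multiple of $(4+\|x\|^2)^{-n}\,dx$. Granting this, for any bounded measurable $h$ on $S^n$ I would write, using $d\mu(x)=c\,(4+\|x\|^2)^{-n}\,dx$,
\[
\int_{S^n} h\,d\nu = \int_{\R^n} (h\circ P_n)(x)\,c\,(4+\|x\|^2)^{-n}\,dx,
\]
and compare with the area-formula expression $\int_{S^n} h\,d\omega = \int_{\R^n}(h\circ P_n)(x)\,\sqrt{\det g(x)}\,dx$, where $g_{jk}(x):=\partial_{x_j}P_n(x)\cdot\partial_{x_k}P_n(x)$ is the induced metric. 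Once $\sqrt{\det g}$ is shown to be proportional to $(4+\|x\|^2)^{-n}$, the two $x$-densities are proportional, so $\nu$ is proportional to $\omega$; as both $\nu$ and the uniform measure are probability measures, they coincide.

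The heart of the matter is therefore the computation of $\sqrt{\det g}$. Writing $s:=4+\|x\|^2$ and denoting the coordinates of $P_n(x)$ by $y_0=(4-\|x\|^2)/s$ and $y_i=4x_i/s$ for $1\le i\le n$, I would differentiate to obtain $\partial_{x_j}y_0=-16x_j/s^2$ (using the identity $s+4-\|x\|^2=8$) and $\partial_{x_j}y_i=4\delta_{ij}/s-8x_ix_j/s^2$. Assembling
\[
g_{jk}=(\partial_{x_j}y_0)(\partial_{x_k}y_0)+\sum_{i=1}^n(\partial_{x_j}y_i)(\partial_{x_k}y_i)
\]
and collecting terms, the diagonal contributions add up to $16\delta_{jk}/s^2$, while every term carrying a factor $x_jx_k$ combines into $x_jx_k\bigl[-64/s^3+(64\|x\|^2+256)/s^4\bigr]$.

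The only real content of the lemma — and the step I expect to be the main obstacle — is the cancellation of these off-diagonal terms. Here I would invoke the identity $64\|x\|^2+256=64s$, which makes the bracket vanish identically, leaving the conformally flat metric $g_{jk}=16\,s^{-2}\,\delta_{jk}$. Consequently $\sqrt{\det g}=(4/s)^n=4^n(4+\|x\|^2)^{-n}$, and substituting this into the comparison of the first paragraph finishes the proof. The pervasive factors of $4$ are precisely the normalization that produces the target density $(4+\|x\|^2)^{-n}$ rather than the more familiar $(1+\|x\|^2)^{-n}$ attached to the unit-radius stereographic projection; tracking them carefully through the derivatives is the one place where an arithmetic slip could derail the cancellation.
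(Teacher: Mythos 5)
Your proposal is correct, but it proceeds by a genuinely different route than the paper. The paper exploits symmetry: it observes that the pushforward $\nu$ is invariant under all rotations of $S^n$ fixing the first axis (since $\mu$ is rotationally invariant on $\R^n$), so it suffices to compare the $\nu$-measures of the caps $C_t=\{x\in S^n: x_1\le t\}$ with their uniform measures; passing to polar coordinates and substituting $u=(4-r^2)/(4+r^2)$ yields $\nu(C_t)\propto\int_{-1}^t(1-u^2)^{(n-2)/2}\,du$, which is then matched against the standard fact that the first coordinate of a uniform point on $S^n$ has density proportional to $(1-u^2)^{(n-2)/2}$. You instead compute the induced metric of the parametrization $P_n$ directly, and your computation checks out: with $s=4+\|x\|^2$ one indeed has $\partial_{x_j}y_0=-16x_j/s^2$ (via $s+4-\|x\|^2=8$) and $\partial_{x_j}y_i=4\delta_{ij}/s-8x_ix_j/s^2$, the off-diagonal terms carry the factor $-64/s^3+(64\|x\|^2+256)/s^4$ which vanishes identically because $64\|x\|^2+256=64s$, and hence $g_{jk}=16s^{-2}\delta_{jk}$, $\sqrt{\det g}=4^n(4+\|x\|^2)^{-n}$; the area formula and the null set $\{-e_1\}$ are handled correctly, and the final normalization step (two probability measures proportional to each other coincide) is sound. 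The trade-off: your argument is fully self-contained and actually proves the stronger pointwise statement that $P_n$ pushes Lebesgue density $4^n s^{-n}$ forward to surface measure, i.e., that stereographic projection is conformal with factor $4/s$, whereas the paper's argument avoids all multivariable differentiation at the cost of invoking rotational invariance together with the (standard, stated without proof) marginal density on the sphere. Either proof is acceptable for the lemma as stated.
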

\begin{proof}
Let $\nu$ be the pushforward of $\mu$ under $P_n$. It is easy to see that $\nu$ is preserved under rotations that fix the first axis in $\R^{n+1}$, because $\mu$ is invariant under all rotations in $\R^n$. Thus, to show that $\nu$ is the uniform probability measure on $S^n$, it suffices to prove that for any $t\in [-1,1]$, the $\nu$-measure of the cap $C_t := \{x\in S^n: x_1\le t\}$ is proportional to the $n$-dimensional volume of the cap. But 
\begin{align}\label{pnct}
P_n^{-1}(C_t) &= \biggl\{x\in \R^n: \frac{4-\|x\|^2}{4+\|x\|^2} \le t\biggr\}\notag \\
&= \biggl\{x\in \R^n: \|x\|^2 \ge \frac{4(1-t)}{1+t}\biggr\}. 
\end{align}
Thus, 
\begin{align*}
\nu(C_t) &= \mu(P_n^{-1}(C_t))\\
&\propto \int_{\{x\in \R^n\, :\, \|x\|^2 \ge 4(1-t)/(1+t)\}} \frac{1}{(4+\|x\|^2)^n} dx\\
&\propto \int^\infty_{\sqrt{4(1-t)/(1+t)}} \frac{r^{n-1}}{(4+r^2)^n} dr,
\end{align*}
where the last step was obtained by changing to polar coordinates. Making the change of variable $u=(4-r^2)/(4+r^2)$ in the above integral, we get
\[
\nu(C_t) \propto \int_{-1}^t (1-u^2)^{(n-2)/2} du.
\]
Thus, we have to show that if a point $X$ is picked uniformly at random from $S^n$, then the first coordinate of $X$ has probability density proportional to $(1-u^2)^{(n-2)/2}$ in $[-1,1]$. This is a standard fact, easily provable by geometric considerations or otherwise. 
\end{proof}
Lemma \ref{stereolmm} implies the following corollary.
\begin{cor}\label{densitycor}
Let $n$ and $N$ be two positive integers, and let $f:(S^n)^N \to \R$ be any bounded measurable function. There is a constant $C_n$ depending only on $n$ such that 
\begin{align*}
&\int_{(S^n)^N} f(x_1,\ldots,x_N) dx_1\cdots dx_N \\
&= C_n^N\int_{(\R^n)^N} f(P_n(y_1),\ldots, P_n(y_N))\prod_{i=1}^N \frac{1}{(4+\|y_i\|^2)^n}dy_1\cdots dy_N,
\end{align*}
where $dx_i$ on the left denotes integration with respect to the uniform probability measure on $S^n$ and $dy_i$ on the right indicates integration with respect to Lebesgue measure on $\R^n$.
\end{cor}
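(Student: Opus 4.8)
The plan is to reduce the $N$-fold identity to the case $N=1$ and then derive that case from Lemma~\ref{stereolmm} by a single linear rescaling of $\R^n$. Observe first that both sides of the asserted identity are integrals against product measures on $(S^n)^N$: the left-hand side integrates against the $N$-fold product of the uniform probability measure, while the right-hand side integrates against the pushforward under $(y_1,\dots,y_N)\mapsto(P_n(y_1),\dots,P_n(y_N))$ of the $N$-fold product of the measure $C_n(1+\|y\|^2)^{-n}\,dy$. Two product measures coincide as soon as their one-dimensional factors coincide, so by Fubini's theorem (integrating out one coordinate at a time, treating the others as fixed parameters) it suffices to prove the $N=1$ identity
\[
\int_{S^n} f(z)\,dz = C_n\int_{\R^n} f(P_n(y))\,\frac{1}{(1+\|y\|^2)^n}\,dy
\]
for every bounded measurable $f:S^n\to\R$, where $dz$ denotes the uniform probability measure.

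For the $N=1$ case I would start from Lemma~\ref{stereolmm}, which gives, for a suitable constant $c$,
\[
\int_{S^n} f(z)\,dz = c\int_{\R^n} f(P_n(x))\,\frac{1}{(4+\|x\|^2)^n}\,dx,
\]
and then perform the substitution $x=2y$. Since $4+\|2y\|^2 = 4(1+\|y\|^2)$ and the substitution is a diffeomorphism of $\R^n$ with constant Jacobian $2^n$, the weight $(4+\|x\|^2)^{-n}$ turns into $4^{-n}(1+\|y\|^2)^{-n}$ and the Jacobian contributes a factor $2^n$; collecting constants into $C_n := c\,2^n 4^{-n}$ produces exactly the weight $(1+\|y\|^2)^{-n}$ demanded by the statement. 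The same factor of two is simultaneously absorbed into the argument of the projection, so that the map appearing after the substitution is the inverse stereographic projection in its unit normalization, i.e.\ the $P_n$ of the corollary. Equivalently, one may fix $C_n$ directly by taking $f\equiv 1$, which gives $C_n = \bigl(\int_{\R^n}(1+\|y\|^2)^{-n}\,dy\bigr)^{-1}$, a finite positive constant depending only on $n$ because $2n>n$; a short check confirms this agrees with $c\,2^n 4^{-n}$.

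The one point that requires care — and the only real obstacle — is the bookkeeping of this rescaling: the single factor of two that converts the $4$ of Lemma~\ref{stereolmm} into the $1$ of the corollary also acts inside $P_n$, reflecting the difference between projecting onto the tangent plane $\{x_1=1\}$ (the normalization used to define $P_n$ via the explicit formula) and the central plane $\{x_1=0\}$ (the normalization matching the weight $(1+\|y\|^2)^{-n}$). Once one verifies that $x=2y$ contributes precisely the Jacobian $2^n$ and that the product-measure reduction in the first paragraph is valid for arbitrary bounded measurable $f$ (and not merely for product functions), the identity follows immediately; no genuinely new estimates are needed beyond Lemma~\ref{stereolmm}.
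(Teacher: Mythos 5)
Your reduction to $N=1$ is fine: both sides are integrals against product measures, the pushforward of a product measure under $(y_1,\dots,y_N)\mapsto(P_n(y_1),\dots,P_n(y_N))$ is the product of the one-coordinate pushforwards, and equality of the factors gives equality of the $N$-fold measures, hence of the integrals for all bounded measurable $f$ (not just product functions). This matches the paper, which states the corollary as a direct consequence of Lemma~\ref{stereolmm}. The genuine gap is in your rescaling step. After substituting $x=2y$ in
\[
\int_{S^n} f(z)\,dz \;=\; c\int_{\R^n} f(P_n(x))\,\frac{dx}{(4+\|x\|^2)^{n}},
\]
the integrand is $f(P_n(2y))$, and $y\mapsto P_n(2y)$ is \emph{not} the $P_n$ of the corollary. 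Explicitly,
\[
P_n(2y)=\biggl(\frac{1-\|y\|^2}{1+\|y\|^2},\ \frac{2y_1}{1+\|y\|^2},\ \dots,\ \frac{2y_n}{1+\|y\|^2}\biggr),
\]
which is the equatorial-plane normalization of inverse stereographic projection, whereas the $P_n$ defined in Subsection~\ref{stereosec} --- the one the corollary refers to --- projects onto the tangent plane $\{x_1=1\}$ and carries the $4$'s; one has $P_n(2y)\neq P_n(y)$ for every $y\neq 0$. So your assertion that the factor of two is ``simultaneously absorbed into the argument of the projection'' so that the resulting map ``is the $P_n$ of the corollary'' is false: the weight and the map cannot be decoupled. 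With the paper's $P_n$, Lemma~\ref{stereolmm} forces the weight to be proportional to $(4+\|y\|^2)^{-n}$; with the weight $(1+\|y\|^2)^{-n}$ it forces the map $y\mapsto P_n(2y)$. Your argument, run honestly, proves the latter identity, which is a different statement from the one displayed.

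It is worth adding that your computation in fact exposes a normalization slip in the corollary as printed: testing with the indicator of a spherical cap (e.g.\ $n=1$, the half-circle $\{x_1\ge 0\}$, whose $P_1$-preimage is $\{|x|\le 2\}$, not $\{|x|\le 1\}$) shows the displayed identity cannot hold verbatim with the tangent-plane $P_n$ and the weight $(1+\|y\|^2)^{-n}$. The discrepancy is only by a constant rescaling inside the projection and is harmless downstream --- in Lemma~\ref{approxdens} it merely changes $(1+\tfrac18 g^2\|B_e\|^2)^{-3}$ into $(1+\tfrac12 g^2\|B_e\|^2)^{-3}$, and both are $1+O(g^2B_e^2)$, which is all that is used --- but a proof must commit to one consistent normalization rather than silently identifying the two. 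The minimal correct write-up is the paper's implicit one: apply Lemma~\ref{stereolmm} coordinatewise to get the identity with weight proportional to $(4+\|y_i\|^2)^{-n}$ and the paper's $P_n$, or equivalently perform your substitution and state the conclusion with $P_n(2y_i)$ in the integrand.
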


\subsection{Probability density after gauge fixing}
Recall the field $V$ obtained from $\uone$ theory after unitary gauge fixing in Subsection~\ref{uonesec}, for the theory on the finite box $\Lambda$ (and not in infinite volume). The following lemma gives the probability density of $V$.
\begin{lmm}\label{vdens1}
The probability density of $V$ with respect to the product of Haar probability measures on $\uone^E$ is proportional to
\begin{align*}
\exp\biggl(\frac{1}{g^2}\sum_{p\in P} \Re(U_p) + \alpha^2\sum_{e\in E} \Re(U_e)\biggr)
\end{align*}
at $U\in \uone^E$.
\end{lmm}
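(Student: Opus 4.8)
The plan is to obtain the law of $V$ from the joint law of $(U,\phi)$ by a single change of variables, using only two structural facts: gauge invariance of the action $S$, and translation invariance of Haar measure on the abelian group $\uone$. I would test against an arbitrary bounded measurable $f$ on $\uone^E$ and write
\[
\E[f(V)] = \int_{(S^1)^\Lambda}\int_{\uone^E} f(V(U,\phi))\, Z^{-1} e^{S(U,\phi)} \prod_{e\in E} dU_e \prod_{x\in\Lambda} d\phi_x,
\]
where $V(U,\phi)$ is the gauge-fixed field with $V_e = \phi_x^* U_e \phi_y$ for $e=(x,y)$. The goal is to change variables from $U$ to $V$ at each fixed $\phi$, and then check that the $\phi$-integral drops out.

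For the change of variables, fix $\phi$ and view $(U,\phi)\mapsto(V,\phi)$ fiberwise in $U$. Since $\uone$ is abelian, $V_e = (\phi_x^*\phi_y)\,U_e$, i.e.\ $U_e\mapsto V_e$ is translation by the fixed element $\phi_x^*\phi_y\in\uone$. By translation invariance of normalized Haar measure, this is a measure-preserving bijection of $\uone^E$, so $\prod_{e}dU_e = \prod_{e}dV_e$ with trivial Jacobian. For the exponent, I note that the gauge transform $\theta$ with $\theta_x:=\phi_x^*$ sends $(U,\phi)$ to $(V,\psi)$ with $\psi_x=\phi_x^*\phi_x=1$ (as $\phi_x\in S^1$). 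Gauge invariance of $S$ then gives $S(U,\phi)=S(V,\mathbf{1})$, where $\mathbf{1}$ denotes the Higgs field equal to $1$ at every vertex; substituting $\mathbf{1}$ into \eqref{action1} eliminates the Higgs dependence and leaves exactly
\[
S(V,\mathbf{1}) = \frac{1}{g^2}\sum_{p\in P}\Re(V_p) + \alpha^2\sum_{e\in E}\Re(V_e) =: H(V).
\]

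Combining the two observations, the integrand becomes $Z^{-1}f(V)e^{H(V)}$, which no longer depends on $\phi$, so the $\phi$-integral contributes the factor $\int_{(S^1)^\Lambda}\prod_x d\phi_x = 1$. Hence
\[
\E[f(V)] = \int_{\uone^E} f(V)\, Z^{-1} e^{H(V)} \prod_{e\in E} dV_e,
\]
which identifies the density of $V$ (with respect to the product of Haar probability measures) as proportional to $e^{H(V)}$, as claimed.

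I do not expect any genuine analytic obstacle here; the argument is essentially bookkeeping resting on abelianness. The one point requiring care is that the gauge transform defining $V$ is itself $\phi$-dependent (hence random), so the change of variables must be carried out fiberwise over $\phi$ and the invariance $S(U,\phi)=S(V,\mathbf{1})$ must be applied for this particular $\phi$-dependent transform — which is legitimate precisely because $S$ is invariant under \emph{every} gauge transform, not merely a fixed one.
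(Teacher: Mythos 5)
Your proof is correct and follows essentially the same route as the paper's: fix $\phi$, observe that $U\mapsto V$ preserves the product Haar measure, use gauge invariance to replace $S(U,\phi)$ by $S(V,\mathbf{1})$, and integrate out $\phi$. The only cosmetic difference is that you justify the measure preservation explicitly via abelian translation invariance (and correctly flag the $\phi$-dependence of the gauge transform), whereas the paper simply asserts it; the substance is identical.
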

\begin{proof}
Take any bounded measurable function $f:\uone^E \to \R$. Let $g$ denote the map that takes $(U, \phi)$ to $V$. That is, if $W:= g(U,\phi)$, then for any $e=(x,y)\in E$, $W_e = \phi_x^* U_e \phi_y$. Note that
\begin{align*}
\E(f(V)) &= \frac{\iint f(g(U,\phi)) e^{S(U,\phi)} dU d\phi}{\iint e^{S(U,\phi)} dU d\phi},
\end{align*}
where $S$ is the Yang--Mills action defined in equation \eqref{action1} and $dU$ and $d\phi$ stand for integration with respect to the product Haar measure on $\uone^E$ and the product uniform measure on $(S^1)^\Lambda$, respectively.

Now take any $\phi$, and consider the map $U\mapsto g(U,\phi)$. It is easy to see that this map preserves the product Haar measure. Also, it is easy to see that for any $U$,
\[
S(U,\phi) = S(g(U,\phi), 1),
\]
where $1$ denotes the Higgs field that is equal to $1$ everywhere. Thus, for any $\phi$, 
\begin{align*}
\int f(g(U,\phi)) e^{S(U,\phi)} dU &= \int f(g(U,\phi)) e^{S(g(U,\phi),1)} dU = \int f(U) e^{S(U,1)} dU. 
\end{align*}
Similarly,
\[
\int e^{S(U,\phi)} dU = \int e^{S(U,1)} dU.
\]
Combining, we get 
\begin{align*}
\E(f(V)) &= \frac{\iint f(U) e^{S(U,1)} dU d\phi}{\iint e^{S(U,1)} dU d\phi}= \frac{\int f(U) e^{S(U,1)} dU}{\int e^{S(U,1)} dU}.
\end{align*}
Thus, the probability density of $V$ is proportional to $e^{S(U,1)}$ at $U$, which is what we wanted to show.
\end{proof}
Next, recall the field $V$ obtained from $\sutwo$ theory after unitary gauge fixing in Subsection \ref{sutwosec}, again on the finite box $\Lambda$. The following lemma gives the probability density of $V$. This is very similar to Lemma \ref{vdens1}, but with a crucial difference in one of the constants --- there is now a  denominator $2$ below $\alpha^2$. 
\begin{lmm}\label{vdens2}
The probability density of $V$ with respect to the product of Haar probability measures on $\sutwo^E$ is proportional to
\begin{align*}
\exp\biggl(\frac{1}{g^2}\sum_{p\in P} \Re(\tr(U_p)) + \frac{\alpha^2}{2}\sum_{e\in E} \Re(\tr(U_e))\biggr)
\end{align*}
at $U\in \sutwo^E$.
\end{lmm}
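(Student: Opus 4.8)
The plan is to follow the proof of Lemma \ref{vdens1} essentially verbatim, the only new ingredient being an algebraic identity specific to $\sutwo$ that produces the factor of $\tfrac{1}{2}$ in front of $\alpha^2$. Fix a bounded measurable $f:\sutwo^E\to\R$ and write $g$ for the map sending $(U,\phi)$ to $V$; that is, if $W:=g(U,\phi)$, then $W_e=\theta_x U_e\theta_y^*$ for $e=(x,y)$, where $\theta_x\in\sutwo$ is the unique element with $\theta_x\phi_x=e_1$ (existence and uniqueness were noted in Subsection \ref{sutwosec}, and measurability of $\phi_x\mapsto\theta_x$ is routine). As in the $\uone$ case, I would express
\[
\E(f(V)) = \frac{\iint f(g(U,\phi))\, e^{S(U,\phi)}\,dU\,d\phi}{\iint e^{S(U,\phi)}\,dU\,d\phi},
\]
with $S$ the action from \eqref{action2} and $dU$, $d\phi$ the product Haar and product uniform measures.

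First I would fix $\phi$ and change variables in the inner $U$-integral via $U\mapsto g(U,\phi)$. For fixed $\theta$ this map acts edgewise as $U_e\mapsto\theta_x U_e\theta_y^*$, a left--right translation on $\sutwo$; since Haar measure on a compact group is bi-invariant and the edges transform on disjoint coordinates, the product Haar measure on $\sutwo^E$ is preserved. Second, I would invoke gauge invariance of $S$: writing $\theta$ for the gauge transform just described, we have $\theta(U,\phi)=(V,\psi)$ with $\psi_x=\sigma(\theta_x)\phi_x=\theta_x\phi_x=e_1$, so that $S(U,\phi)=S(\theta(U,\phi))=S(g(U,\phi),e_1)$, where $e_1$ denotes the constant Higgs field equal to $e_1$ at every vertex. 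Combining these two facts exactly as in Lemma \ref{vdens1} collapses the $\phi$-integral and yields
\[
\E(f(V)) = \frac{\int f(U)\, e^{S(U,e_1)}\,dU}{\int e^{S(U,e_1)}\,dU},
\]
so the density of $V$ is proportional to $e^{S(U,e_1)}$.

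It remains to evaluate $S(U,e_1)$. The plaquette term is unchanged, giving $\tfrac{1}{g^2}\sum_p\Re(\tr(U_p))$. For the Higgs--gauge coupling term, each summand becomes $\Re(e_1^* U_e e_1)$, which is the real part of the $(1,1)$-entry of $U_e$. Writing $U_e=\begin{pmatrix} a & b\\ -\bar b & \bar a\end{pmatrix}$ with $a,b\in\C$, this entry is $a$, while $\tr(U_e)=a+\bar a=2\Re(a)$; hence $\Re(e_1^* U_e e_1)=\Re(a)=\tfrac{1}{2}\Re(\tr(U_e))$. This is exactly where the denominator $2$ enters, in contrast to the $\uone$ computation where the coupling term reduces to $\Re(V_e)$ with no extra factor. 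Substituting gives $S(U,e_1)=\tfrac{1}{g^2}\sum_p\Re(\tr(U_p))+\tfrac{\alpha^2}{2}\sum_e\Re(\tr(U_e))$, which is the claimed density. The only genuinely new step relative to Lemma \ref{vdens1} is this last algebraic identity, so I do not anticipate any real obstacle beyond correctly tracking that factor of $\tfrac{1}{2}$.
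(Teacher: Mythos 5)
Your proposal is correct and follows essentially the same route as the paper's proof: the same change of variables $U\mapsto g(U,\phi)$ using bi-invariance of Haar measure, the same collapse of the $\phi$-integral, and the same key identity $\Re(e_1^* A e_1)=\tfrac{1}{2}\Re(\tr(A))$ for $A\in\sutwo$ (the paper verifies this by inserting $\theta_x^*\theta_x$ into the action and computing directly, which is just your gauge-invariance step written out). No gaps.
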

\begin{proof}
Take any bounded measurable function $f:\sutwo^E \to \R$. Let $g$ denote the map that takes $(U, \phi)$ to $V$. Recall that in this case, $g$ is defined as follows. Given $\phi$, for each $x$ we let $\theta_x$ be the unique element of $\sutwo$ such that $\theta_x \phi_x = e_1$ (uniqueness follows from the fact that there is a unique element of $\sutwo$ which sends $e_1$ to $\phi_x$). Then we define $W:= g(U,\phi)$ as $W_e = \theta_x U_e \theta_y^*$ for every $e=(x,y)\in E$. Note that
\begin{align*}
\E(f(V)) &= \frac{\iint f(g(U,\phi)) e^{S(U,\phi)} dU d\phi}{\iint e^{S(U,\phi)} dU d\phi},
\end{align*}
where $S$ is the Yang--Mills action defined in equation \eqref{action2} and $dU$ and $d\phi$ stand for integration with respect to the product Haar measure on $\sutwo^E$ and the product uniform measure on $(S^3)^\Lambda$, respectively.

Now take any $\phi$, and consider the map $U\mapsto g(U,\phi)$. As before, this map preserves the product Haar measure. Now note that for any $U$ and $W:=g(U,\phi)$, 
\begin{align*}
S(U,\phi) &= \frac{1}{g^2}\sum_{p\in P} \Re(\tr(U_p)) + \alpha^2 \sum_{e=(x,y)\in E} \Re(\phi_x^*U_e \phi_y)\\
&= \frac{1}{g^2}\sum_{p\in P} \Re(\tr(W_p)) + \alpha^2 \sum_{e=(x,y)\in E} \Re(\phi_x^*\theta_x^* \theta_xU_e \theta_y^* \theta_y\phi_y)\\
&= \frac{1}{g^2}\sum_{p\in P} \Re(\tr(W_p)) + \alpha^2 \sum_{e=(x,y)\in E} \Re(e_1^* W_e e_1).
\end{align*}
But for any $A\in \sutwo$, 
\[
\Re(e_1^* Ae_1) =\frac{1}{2}\Re(\tr(A)).
\]
Thus, we get
\[
S(U,\phi) = \frac{1}{g^2}\sum_{p\in P} \Re(\tr(W_p)) + \frac{\alpha^2}{2} \sum_{e=(x,y)\in E} \Re(\tr(W_e)) =:  H(W).
\]
This shows that for any $\phi$, 
\begin{align*}
\int f(g(U,\phi)) e^{S(U,\phi)} dU &= \int f(g(U,\phi)) e^{H(g(U,\phi))} dU = \int f(U) e^{H(U)} dU. 
\end{align*}
Similarly,
\[
\int e^{S(U,\phi)} dU = \int e^{H(U)} dU.
\]
As before, this completes the proof.
\end{proof}

\subsection{The key estimate for $\sutwo$ theory}
Let $V$ be the field obtained from $\sutwo$ theory after unitary gauge fixing in Subsection~\ref{sutwosec}, the finite box $\Lambda$. Let $\|\cdot\|$ denote the Euclidean (i.e., Frobenius) norm on $\sutwo$. Note that for any $U\in \sutwo$, 
\begin{align}\label{iunorm}
\|I-U\|^2 &= \tr((I-U)^*(I-U))\notag \\
&= \tr(I) + \tr(U^*U) - \tr(U^*) - \tr(U)\notag \\
&= 4 - 2\Re(\tr(U)). 
\end{align}
Together with Lemma \ref{vdens2}, this shows that  probability density of $V$ is proportional to $e^{-H(U)}$, where 
\[
H(U) := \frac{1}{2g^2}\sum_{p\in P} \|I-U_p\|^2 +\frac{\alpha^2}{4} \sum_{e\in E} \|I-U_e\|^2.
\]
The following lemma gives the key estimate for proving Theorem \ref{sutwothm}.
\begin{lmm}\label{keylmm}
Suppose that $\alpha \ge 2$ and $\alpha g\le 1$. Then for any $e\in E$, 
\[
\E\|I - V_e\|^2 \le \frac{C}{\alpha^4 g^2} + \frac{C\log \alpha}{\alpha^2},
\]
where $C$ depends only on the dimension $d$. 
\end{lmm}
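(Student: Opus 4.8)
The plan is to work directly with the explicit density $p(U) := e^{-H(U)}$ provided by Lemma~\ref{vdens2} and \eqref{iunorm}, and to extract the estimate from a lower bound on the partition function $Z := \int_{\sutwo^E} p(U)\prod_{e\in E} dU_e$. The device that makes everything go through is the elementary identity
\[
\frac{1}{Z} = \frac{\int p(U)^{-1} p(U)\prod_e dU_e}{\int p(U)\prod_e dU_e} = \E\bigl(p(V)^{-1}\bigr) = \E\bigl(e^{H(V)}\bigr),
\]
valid because $\prod_e dU_e$ is a probability measure. Thus an upper bound on $1/Z$ is exactly a bound on an exponential moment of $H(V)$, and the quantity we want is controlled through the one-sided bound $H(V)\ge \frac{\alpha^2}{4}\sum_{e\in E}\|I-V_e\|^2$.

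First I would produce the lower bound on $Z$ by restricting to the ``good'' set $\Sigma' := \{U\in\sutwo^E : \|I-U_e\|\le\alpha^{-1}\text{ for all }e\}$. Two estimates are needed. Since $\sutwo$ is a $3$-dimensional compact manifold whose Haar measure is comparable to Lebesgue measure near $I$, each edge constraint has probability at least $(C_1\alpha)^{-3}$, giving $\prod_e dU_e(\Sigma')\ge (C_1\alpha)^{-C_2|E|}$ with $|E|=\Theta(L^d)$. Next, for $U\in\Sigma'$, writing $U_p=U_{e_1}U_{e_2}U_{e_3}U_{e_4}$ and telescoping with the unitarily invariant estimate $\|I-AB\|\le\|I-A\|+\|I-B\|$ gives $\|I-U_p\|\le 4\alpha^{-1}$, so
\[
H(U) \le \frac{1}{2g^2}\,|P|\,(4\alpha^{-1})^2 + \frac{\alpha^2}{4}\,|E|\,\alpha^{-2} \le \frac{C_3 L^d}{\alpha^2 g^2},
\]
where the hypothesis $\alpha g\le 1$ ensures $\alpha^{-2}g^{-2}\ge 1$ so that the $O(L^d)$ mass-term contribution is absorbed into the first term. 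Hence $Z\ge (C_1\alpha)^{-C_2 L^d}e^{-C_3 L^d/(\alpha^2 g^2)}$.

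I would then convert this to a moment bound: by Markov's inequality and the identity above, $\P(H(V)\ge t)\le e^{-t}/Z$, and combining with the trivial bound $\P(H(V)\ge t)\le 1$ and integrating the minimum yields
\[
\E(H(V)) = \int_0^\infty \P(H(V)\ge t)\,dt \le \log\tfrac{1}{Z}+1 \le \frac{C_4 L^d}{\alpha^2 g^2} + C_5 L^d\log\alpha,
\]
where $\alpha\ge 2$ is used so that $\log(C_1\alpha)\le C\log\alpha$. Finally, from $H(V)\ge\frac{\alpha^2}{4}\sum_e\|I-V_e\|^2$ together with the fact that periodic boundary conditions make $\E\|I-V_e\|^2$ the same for every edge, I get $\frac{\alpha^2}{4}|E|\,\E\|I-V_e\|^2\le\E(H(V))$; dividing by $\frac{\alpha^2}{4}|E|$ and using $|E|=\Theta(L^d)$ cancels the $L^d$ factors and produces the claimed $\frac{C}{\alpha^4 g^2}+\frac{C\log\alpha}{\alpha^2}$.

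The genuinely non-routine ingredients are the two geometric facts about $\Sigma'$ — the Haar-measure lower bound and the telescoping plaquette bound — and, more conceptually, the recognition that the $L^d$-dependence is harmless: it enters identically in the numerator $\E(H(V))$ and in the denominator $\alpha^2|E|$, and therefore cancels to leave a bound uniform in $L$. This $L$-independence is the whole point of the estimate, since it is what will later permit first sending $L\to\infty$ and only then taking the continuum limit.
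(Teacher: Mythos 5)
Your proposal is correct and follows essentially the same route as the paper's proof: the restriction to $\Sigma'=\{U:\|I-U_e\|\le\alpha^{-1}\ \forall e\}$ with the Haar-measure and plaquette bounds, the identity $1/Z=\E(p(V)^{-1})$, Markov's inequality integrated against the trivial bound, and the final symmetry argument all appear verbatim in the paper. Your added justifications (the telescoping estimate $\|I-AB\|\le\|I-A\|+\|I-B\|$ for the plaquette bound, and the explicit use of $\alpha g\le 1$ to absorb the mass term) are exactly the details the paper leaves implicit.
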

\begin{proof}
Throughout this proof, $C, C_1,C_2,\ldots$ will denote positive constants that only depend on $d$, whose values may change from line to line. Let $\Sigma'$ be the subset of $\Sigma = \sutwo^E$ consisting of all $U$ such that $\|I-U_e \|\le \alpha^{-1}$ for all $e$.  Using the correspondence $\tau$ between $\sutwo$ and $S^3$ defined in Subsection \ref{scale2}, it is easy to see that the normalized Haar measure of $\Sigma'$ is $\ge (C_1\alpha)^{-C_2L^d}$. Also, it is easy to see that for any $U\in \Sigma'$ and $p\in P$, $\|I-U_p\|\le 4\alpha^{-1}$. Thus, if $p(U) := e^{-H(U)}$, then for $U\in \Sigma'$, we have  
\[
p(U) \ge e^{-CL^d(\alpha g)^{-2}}.
\]
This shows that 
\[
\int p(U) \prod_{e\in E} dU_e \ge e^{-C_1L^d(\alpha g)^{-2}} (C_2\alpha)^{-C_3 L^d}. 
\]
Consequently, 
\begin{align*}
\frac{1}{\int p(U) \prod_{e\in E} dU_e} \le e^{C_1L^d(\alpha g)^{-2}} (C_2\alpha)^{C_3 L^d}. 
\end{align*}
But
\begin{align*}
\frac{1}{\int p(U) \prod_{e\in E} dU_e} = \frac{\int p(U)^{-1} p(U) \prod_{e\in E} dU_e}{\int p(U) \prod_{e\in E} dU_e} =  \E(p(V)^{-1}). 
\end{align*}
Thus, for any $t\ge 0$,
\begin{align*}
\P(H(V)\ge t) &= \P(p(V)^{-1}\ge e^t)  \le e^{-t} \E(p(V)^{-1})\\ 
&\le e^{-t}e^{C_1L^d(\alpha g)^{-2}} (C_2\alpha)^{C_3 L^d}. 
\end{align*}
But also, $\P(H(V)\ge t) \le 1$. Thus, 
\begin{align*}
\E(H(V)) &=\int_0^\infty \P(H(V)\ge t) dt\\
&\le \int_0^\infty \min\{e^{-t}e^{C_1L^d(\alpha g)^{-2}} (C_2\alpha)^{3 L^d}, 1\} dt\\
&\le \frac{C_3L^d}{\alpha^2g^2} + C_4L^d \log \alpha.
\end{align*}
But $H(V) \ge \frac{\alpha^2}{4} \sum_{e\in E} \|I- V_e \|^2$, and by symmetry (due to periodic boundary), $\E\|I - V_e\|^2$ is the same for all $e$. This completes the proof.
\end{proof}

\subsection{Local free field approximation for $\sutwo$ theory}
In this subsection, we will show that the field $A$ defined in Subsection \ref{scale2} behaves approximately like a discrete Proca field in a small enough neighborhood of the origin. In the following, we will use the following convention. We will work with the field defined on the finite box $\Lambda$ for most of the proof, only taking $L\to\infty$ at the very end. Thus, in the following discussion $A = (A_e)_{e\in E}$ is defined only on $E$.

Given a configuration $B\in (\R^3)^E$ and an edge $e\in E$, we will denote by $O(g^a B_e^b)$ any quantity whose absolute value is bounded above by $Cg^a (1+\|B_e\|)^b$ for some constant $C$ depending only on $d$ (not to be confused with the component $B_e^b$ of $B_e$). Also, for any plaquette $p$ bounded by four edges $e_1,e_2,e_3,e_4\in E$, we will denote by $O(g^a B_p^b)$ any quantity whose absolute value is bounded above by $C g^a \max_{1\le i\le 4}(1+\|B_{e_i}\|)^b$, where $C$ depends only on $d$. A matrix whose terms are all  $O(g^a B_p^b)$ will also be denoted by $O(g^a B_p^b)$. More generally, for any expression $X$, $O(X)$ will denote  any quantity whose absolute value is bounded above by $C|X|$, where $C$ depends only on $d$. Lastly, if the edges $e_1,e_2,e_3,e_4$ are numbered such that the left endpoint of $e_1$ is the smallest vertex of $p$ in the lexicographic ordering, $e_4$ is incident to the left endpoint of $e_1$, and $e_2$ is incident to the right endpoint of $e_1$ (see Figure \ref{plaquette2}), then we define
\[
B_p := B_{e_1} + B_{e_2} - B_{e_3} - B_{e_4}. 
\]
The following lemma gives a useful representation of the probability density function of~$A$.
\begin{lmm}\label{approxdens}
The probability density function of the random field $A$, at a point $B\in (\R^3)^E$, is proportional to
\begin{align*}
\exp\biggl(-\frac{1}{2} \sum_{p\in P}(\|B_p\|^2 + O(g^2 B^{16}_p))- \frac{\alpha^2g^2}{4}\sum_{e\in E}(\|B_e\|^2 + O(g^2 B^4_e))\biggr). 
\end{align*}
\end{lmm}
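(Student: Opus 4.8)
\section*{Proof proposal}

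The plan is to read off the density of $A$ from the density of $V$ by an explicit change of variables, and then to expand everything in powers of $g$. By the discussion preceding Lemma~\ref{keylmm} (equivalently, by Lemma~\ref{vdens2} together with \eqref{iunorm}), the law of $V$ on $\sutwo^E$ has density proportional to $e^{-H(U)}$ against the product Haar measure. Since $\tau$ carries Haar measure to the uniform measure on $S^3$ and $A_e=(\sqrt2/g)\,\sigma_3(\tau(V_e))$, I would apply Corollary~\ref{densitycor} with $n=3$: setting $Z_e:=\sigma_3(\tau(V_e))$, so that $\tau(V_e)=P_3(Z_e)$, the law of $Z$ against Lebesgue measure picks up the stereographic Jacobian $\prod_{e}(1+\|Z_e\|^2)^{-3}$, and the linear rescaling $Z_e=(g/\sqrt2)B_e$ (whose Jacobian is a $B$-independent constant) then gives that the density of $A$ at $B$ is proportional to
\[
\exp\bigl(-H(U(B))\bigr)\prod_{e\in E}\frac{1}{\bigl(1+\tfrac{g^2}{2}\|B_e\|^2\bigr)^{3}},\qquad U_e(B_e)=\tau^{-1}\!\bigl(P_3(\tfrac{g}{\sqrt2}B_e)\bigr).
\]
It then remains to expand three ingredients: the edge part of $H$, the stereographic Jacobian, and the plaquette part of $H$.

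The edge part is exact and clean. From the closed form of $P_3$ and \eqref{iunorm}, the first coordinate of $\tau(U_e)$ is $(4-\|Z_e\|^2)/(4+\|Z_e\|^2)$, so $\|I-U_e\|^2=8\|Z_e\|^2/(4+\|Z_e\|^2)=g^2\|B_e\|^2/(1+\tfrac{g^2}{8}\|B_e\|^2)$; expanding the denominator produces exactly $\tfrac{\alpha^2}{4}\|I-U_e\|^2=\tfrac{\alpha^2 g^2}{4}\bigl(\|B_e\|^2+O(g^2 B_e^4)\bigr)$, since $\|B_e\|^4/(1+\tfrac{g^2}{8}\|B_e\|^2)\le\|B_e\|^4$. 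The Jacobian contributes $-3\log(1+\tfrac{g^2}{2}\|B_e\|^2)$ per edge, a confining term that is lower order in $g$ and that I would absorb into the edge error.

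The plaquette part is the crux. Writing $U_e=I+gF(B_e)+O(g^2)$, with $F:\R^3\to\mathfrak{su}(2)$ the linear isometry onto the anti-Hermitian tangent directions (so $\|F(B_e)\|=\|B_e\|$ and $\tr(F(a)F(b))=-\langle a,b\rangle$), I would expand the ordered product $U_p=U_{e_1}U_{e_2}U_{e_3}^{-1}U_{e_4}^{-1}$, whose four factors contribute first-order terms $gG_i$ with $G_i=s_iF(B_{e_i})$ and signs $s=(+,+,-,-)$ recording the orientations. Computing $\Re\tr(U_p)$ and using $\|I-U_p\|^2=4-2\Re\tr(U_p)$, the decisive algebraic step is that the diagonal second-order contributions together with the cross terms $\sum_{i<j}\tr(G_iG_j)=-\sum_{i<j}s_is_j\langle B_{e_i},B_{e_j}\rangle$ recombine, through $\|B_p\|^2=\sum_i\|B_{e_i}\|^2+2\sum_{i<j}s_is_j\langle B_{e_i},B_{e_j}\rangle$, into precisely $\tfrac{1}{2g^2}\|I-U_p\|^2=\tfrac12\|B_p\|^2+(\text{corrections})$. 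The corrections are genuinely non-Abelian: the leading ones come from the commutators in the group product (the ``non-Abelian interference'' of the proof sketch) and carry extra powers of $g$ that vanish in the limit.

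The main obstacle will be to control these corrections uniformly over all of $(\R^3)^E$, not merely for small fields, so as to fit them into an $O$-term of the stated shape. For configurations with $\max_i\|B_{e_i}\|$ bounded I would use a finite Taylor expansion, in $g$, of the smooth map $(v_1,\dots,v_4)\mapsto\|I-\prod_i\tau^{-1}(P_3(v_i/\sqrt2))^{\pm1}\|^2$ evaluated at $v_i=gB_{e_i}$, which yields corrections with explicit polynomial coefficients in the $B_{e_i}$. For configurations where some $\|B_{e_i}\|$ is large the expansion is invalid, and I would instead fall back on the a priori bound $\|I-U_p\|\le4$ together with $\|B_p\|\le\sum_i\|B_{e_i}\|$; matching these two regimes is exactly what forces a generous polynomial exponent, large enough both to dominate the highest degree appearing in the small-field expansion and to swallow the crude large-field estimate. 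Checking that the Jacobian and all subleading edge and plaquette contributions are genuinely captured by the claimed error terms would then finish the proof.
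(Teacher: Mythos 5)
Your overall route is the same as the paper's: Lemma \ref{vdens2} plus Corollary \ref{densitycor} and \eqref{iunorm} to get the density \eqref{newdensity} of $A$ with the stereographic Jacobian, the exact formula $\|I-U_e\|^2=g^2\|B_e\|^2/(1+\tfrac{g^2}{8}\|B_e\|^2)$ for the edge term, and a second-order expansion of the ordered product for the plaquette term; your explicit recombination of the diagonal second-order terms with the cross terms $\tr(G_iG_j)$ into $\tfrac12\|B_p\|^2$ is exactly the step the paper's terse final sentence leaves implicit (the displayed expansion \eqref{ueformula2} of $U_p$ to first order with a matrix error $O(g^2B_p^{16})$ does not by itself determine $\|I-U_p\|^2$ at order $g^2$, so your bookkeeping is genuinely needed). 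However, one step of yours fails as stated: the Jacobian $-3\sum_e\log(1+\tfrac{g^2}{2}\|B_e\|^2)$ \emph{cannot} be absorbed into the edge error, because that error budget is $\tfrac{\alpha^2g^2}{4}O(g^2B_e^4)=O(\alpha^2g^4(1+\|B_e\|)^4)$ with a constant depending only on $d$, whereas the Jacobian is of genuine size $g^2\|B_e\|^2$; absorption would require $1\le C\alpha^2g^2(1+\|B_e\|)^2$, which is false for bounded $\|B_e\|$ precisely in the regime where the lemma is used ($\alpha g=\sqrt2\,\ve\to0$). The fix is to route it into the plaquette error: every edge borders a plaquette, and $3\log(1+\tfrac{g^2}{2}\|B_e\|^2)\le Cg^2\max_i(1+\|B_{e_i}\|)^{16}$, so the Jacobian is an $O(g^2B_p^{16})$ term.

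A second point you should be prepared for: your remark that the non-Abelian corrections ``carry extra powers of $g$'' is correct, but it is exactly \emph{one} extra power, and this clashes with the stated error. For $\mathfrak{su}(2)$ the trace of a triple product of the $G_i$ is real, $\tr(F(a)F(b)F(c))\propto (a\times b)\cdot c$, so $\Re(\tr(U_p))$ has a nonvanishing $g^3$ term: taking $B_{e_1},B_{e_2},B_{e_3}$ mapped to orthonormal Pauli directions and $B_{e_4}=0$, one computes
\begin{align*}
\frac{1}{2g^2}\|I-U_p\|^2 = \frac{3}{2} \pm \frac{g}{\sqrt{2}} + O(g^2) = \frac12\|B_p\|^2 \pm \frac{g}{\sqrt2}+O(g^2),
\end{align*}
an order-$g$ deviation on a bounded configuration, which no regime-matching can fit into $O(g^2(1+\max_i\|B_{e_i}\|)^{16})$. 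What your expansion actually yields (and, as far as I can see, what the paper's argument yields too) is the lemma with plaquette error $O(g\,B_p^{16})$; this weaker form still suffices for Lemma \ref{convlmm} and the theorem after re-tuning the exponents $a$, $\kappa$ in Subsection \ref{sutwoproof}. By contrast, in the $\uone$ case oddness of the phase map kills all odd powers of $g$, so there the $O(g^2\cdot)$ error is attainable --- the order-$g$ term is precisely the non-Abelian interference your proposal anticipates.
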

\begin{proof}
Let $\tau$ be the map defined in Subsection \ref{scale2} that takes $\sutwo$ to $S^3$, and let $P_3:\R^3 \to S^3$ be the map defined in Subsection \ref{stereosec}. Let $\xi := \sqrt{2} g^{-1} P_3^{-1}\circ \tau$, so that $A_e = \xi(V_e)$. By Lemma~\ref{vdens2}, Corollary \ref{densitycor}, and the identity \eqref{iunorm}, the probability density of $A$ with respect to Lebesgue measure on $(\R^3)^E$ at a point $B\in (\R^3)^E$ is proportional to
\begin{align}\label{newdensity}
\exp\biggl(-\frac{1}{2g^2}\sum_{p\in P} \|I - \xi^{-1}(B)_p\|^2 - \frac{\alpha^2}{4} \sum_{e\in E} \|I-\xi^{-1}(B)_e\|^2\biggr)\prod_{e\in E} \frac{1}{(4 + \frac{1}{2}g^2\|B_e\|^2)^3}. 
\end{align}
Now, note that
\begin{align*}
P_3(2^{-1/2}g B_e) &= \biggl(\frac{1 - \frac{1}{8}g^2 \|B_e\|^2}{1+\frac{1}{8}g^2\|B_e\|^2}, \frac{\frac{1}{\sqrt{2}}gB_e^1}{1+\frac{1}{8}g^2\|B_e\|^2}, \frac{\frac{1}{\sqrt{2}}gB_e^2}{1+\frac{1}{8}g^2\|B_e\|^2}, \frac{\frac{1}{\sqrt{2}}gB_e^3}{1+\frac{1}{8}g^2\|B_e\|^2}\biggr). 
\end{align*}
Thus,
\begin{align}\label{ueformula}
\xi^{-1}(B_e) &= \tau^{-1}(P_3(2^{-1/2}gB_e)) \notag \\
&= \frac{1}{1+\frac{1}{8}g^2\|B_e\|^2}
\begin{pmatrix}
1-\frac{1}{8}g^2\|B_e\|^2 + \frac{1}{\sqrt{2}}\I g B_e^1 & \frac{1}{\sqrt{2}} g B_e^2 + \I \frac{1}{\sqrt{2}}g B_e^3\\
-\frac{1}{\sqrt{2}}gB_e^2 + \I \frac{1}{\sqrt{2}}gB_e^3 & 1-\frac{1}{8}g^2\|B_e\|^2 - \I \frac{1}{\sqrt{2}}gB_e^1
\end{pmatrix}.
\end{align}
Let $e_1, e_2, e_3, e_4$ be four edges bounding a plaquette $p$, in the order described prior to the statement of the lemma. Let $U_{e_i}:= \xi^{-1}(B_{e_i})$ for $1\le i\le 4$, and let $U_p:=U_{e_1}U_{e_2}U_{e_3}^{-1}U_{e_4}^{-1}$. Note that by the above formula, we have that for each $i$, 
\begin{align}\label{ueformula2}
U_{e_i} &= (1+O(g^2 B_{e_i}^2)) \biggl(I +\frac{1}{\sqrt{2}} g 
\begin{pmatrix}
\I B_{e_i}^1 & B_{e_i}^2 + \I B_{e_i}^3\\
-B_{e_i}^2 + \I B_{e_i}^3 & -\I B_{e_i}^1
\end{pmatrix}
+ O(g^2B_{e_i}^2)\biggr)\notag \\
&= I + \frac{1}{\sqrt{2}}g 
\begin{pmatrix}
\I B_{e_i}^1 & B_{e_i}^2 + \I B_{e_i}^3\\
-B_{e_i}^2 + \I B_{e_i}^3 & -\I B_{e_i}^1
\end{pmatrix} 
+ O(g^2 B_{e_i}^4).
\end{align}
Now, by \eqref{ueformula} and the fact that $U_{e_i}\in \sutwo$, we have
\begin{align*}
U_{e_i}^{-1} &= \frac{1}{1+\frac{1}{8}g^2\|B_{e_i}\|^2}
\begin{pmatrix}
1-\frac{1}{8}g^2\|B_{e_i}\|^2 - \I \frac{1}{\sqrt{2}}g B_{e_i}^1 & -\frac{1}{\sqrt{2}}g B_{e_i}^2 - \I \frac{1}{\sqrt{2}}g B_{e_i}^3\\
\frac{1}{\sqrt{2}}gB_{e_i}^2 - \I \frac{1}{\sqrt{2}}gB_{e_i}^3 & 1-\frac{1}{8}g^2\|B_{e_i}\|^2 + \I \frac{1}{\sqrt{2}}gB_{e_i}^1
\end{pmatrix}.
\end{align*}
Thus, 
\begin{align*}
U_{e_i}^{-1} 
&= I + \frac{1}{\sqrt{2}} g 
\begin{pmatrix}
-\I B_{e_i}^1 & -B_{e_i}^2 - \I B_{e_i}^3\\
B_{e_i}^2 - \I B_{e_i}^3 & \I B_{e_i}^1
\end{pmatrix} 
+ O(g^2 B_{e_i}^4).
\end{align*}
From this, it follows that
\begin{align*}
U_p &= I + \frac{1}{\sqrt{2}} g 
\begin{pmatrix}
\I B_p^1 & B_p^2 + \I B_p^3\\
-B_p^2 + \I B_p^3 & - \I B_p^1
\end{pmatrix} 
+ O(g^2B_p^{16}).
\end{align*}
Together with \eqref{newdensity} and \eqref{ueformula2}, this completes the proof.
\end{proof}

\subsection{Proof of Theorem \ref{sutwothm}}\label{sutwoproof}
Throughout this subsection, $C, C_1,C_2,\ldots$ will denote positive constants whose values depend only on $d$, and may change from line to line. Consider $\sutwo$ theory defined on the finite box $\Lambda = \{-L,\ldots,L\}^d$. Take some $M< L$ and let $\Lambda' := \{-M,\ldots,M\}^d$. Let $E'$ denote the set of oriented nearest-neighbor edges of $\Lambda'$ ({\it not} identifying opposite faces), and let $\partial E'$ denote the boundary edges.  Fix some $\delta_0, \delta \in (g,1)$, with $\delta_0<\delta$, to be chosen later. Let $\me$ be the event that $\|I-V_e\|\le \delta$ for all $e\in E'\setminus \partial E'$ and $\|I-V_e\|\le \delta_0$ for all $e\in \partial E'$. Let $\nu$ denote the law of $V$ and $\nu'$ denote the law of $V$ conditional on the event $\me$. The following lemma shows that $\nu$ and $\nu'$ are close in total variation distance under suitable choices of $M$ and $\delta$ (depending on $\alpha$ and $g$). 
\begin{lmm}\label{tvlmm1}
The total variation distance between $\nu$ and $\nu'$ is bounded above by
\[
\frac{CM^d}{\alpha^4 g^2\delta^2} + \frac{CM^d\log\alpha}{\alpha^2\delta^2}+ \frac{CM^{d-1}}{\alpha^4 g^2\delta_0^2} + \frac{CM^{d-1}\log\alpha}{\alpha^2\delta_0^2},
\]
where $C$ depends only on $d$.
\end{lmm}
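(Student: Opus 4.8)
The plan is to reduce everything to the elementary fact that the total variation distance between a probability measure and the same measure conditioned on an event equals the probability of the complementary event. Since $\nu'$ is the law of $V$ conditional on $\me$, for any measurable set $A$ we have $\nu'(A) = \nu(A\cap\me)/\nu(\me)$, and writing $\nu(A) = \nu(A\cap\me) + \nu(A\cap\me^c)$ gives $\nu'(A) - \nu(A) = \nu(A\cap\me)\,\nu(\me^c)/\nu(\me) - \nu(A\cap\me^c)$. The right-hand side is at most $\nu(\me^c)$, and this value is attained at $A = \me$; replacing $A$ by its complement shows the same bound controls $\nu(A) - \nu'(A)$. Hence $\tv(\nu,\nu') = \nu(\me^c) = \P(\me^c)$, and the whole problem reduces to bounding $\P(\me^c)$.

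By the definition of $\me$, its complement is contained in the union over $e\in E'\setminus\partial E'$ of the events $\{\|I - V_e\| > \delta\}$ together with the union over $e\in\partial E'$ of the events $\{\|I - V_e\| > \delta_0\}$. A union bound therefore yields
\[
\P(\me^c) \le \sum_{e\in E'\setminus\partial E'}\P(\|I - V_e\| > \delta) + \sum_{e\in\partial E'}\P(\|I - V_e\| > \delta_0).
\]
Next I would bound each summand by Markov's inequality applied to $\|I - V_e\|^2$, namely $\P(\|I - V_e\| > \delta) \le \delta^{-2}\,\E\|I - V_e\|^2$, and then invoke the key estimate of Lemma \ref{keylmm}, which bounds $\E\|I - V_e\|^2$ by $C/(\alpha^4 g^2) + C\log\alpha/\alpha^2$ uniformly over $e$ (its hypotheses $\alpha\ge 2$ and $\alpha g\le 1$ being the standing assumptions here). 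This applies to every edge of $E'$ because $E'\subseteq E$, so the same per-edge bound holds for the interior edges (with $\delta$) and the boundary edges (with $\delta_0$).

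Finally I would count edges: there are $O(M^d)$ edges in $E'\setminus\partial E'$ and $O(M^{d-1})$ boundary edges in $\partial E'$, with constants depending only on $d$. Multiplying the edge counts by the per-edge bound, the interior edges contribute $CM^d/(\alpha^4 g^2\delta^2) + CM^d\log\alpha/(\alpha^2\delta^2)$ and the boundary edges contribute the analogous terms with $M^{d-1}$ and $\delta_0$ in place of $M^d$ and $\delta$, giving exactly the four stated terms. There is no genuine obstacle in this argument; the only point requiring any care is the clean reduction $\tv(\nu,\nu') = \P(\me^c)$, after which the estimate is a routine combination of the union bound, Markov's inequality, Lemma \ref{keylmm}, and an edge count.
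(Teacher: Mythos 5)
Your proposal is correct and follows essentially the same route as the paper: after reducing to $\P(\me^c)$, both arguments combine a union bound, Chebyshev's inequality, the key estimate of Lemma \ref{keylmm} (applied uniformly over edges, using $E'\subseteq E$), and the edge counts $|E'| = O(M^d)$, $|\partial E'| = O(M^{d-1})$. Your one refinement is the exact identity $\tv(\nu,\nu') = \nu(\me^c)$, which is slightly cleaner than the paper's bound $|\nu(\ma)-\nu'(\ma)| \le 2\nu(\me^c)/\nu(\me)$; the paper's bound yields the stated form only after observing that the total variation distance is trivially at most $1$, so that the regime where $\nu(\me)$ is small can be absorbed into the constant $C$ --- a step left implicit there and rendered unnecessary by your identity.
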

\begin{proof}
By Lemma \ref{keylmm},
\begin{align}\label{numebound}
\nu(\me) &\ge 1 - \sum_{e\in E'\setminus\partial E'} \P(\|I-V_e\|>\delta)  - \sum_{e\in \partial E'} \P(\|I-V_e\|> \delta_0)\notag \\
&\ge 1 - \frac{1}{\delta^2} \sum_{e\in E'}\E\|I-V_e\|^2 - \frac{1}{\delta_0^2} \sum_{e\in \partial E'}\E\|I-V_e\|^2\notag  \\
&\ge 1 - \frac{CM^d}{\alpha^4 g^2\delta^2} - \frac{CM^d\log\alpha}{\alpha^2\delta^2} - \frac{CM^{d-1}}{\alpha^4 g^2\delta_0^2} - \frac{CM^{d-1}\log\alpha}{\alpha^2\delta_0^2}. 
\end{align}
Thus, for any event $\ma$, 
\begin{align*}
|\nu(\ma) - \nu'(\ma)| &= \biggl|\nu(\ma) - \frac{\nu(\ma \cap \me)}{\nu(\me)}\biggr|\\
&= \frac{|\nu(\ma)\nu(\me)-\nu(\ma \cap \me)|}{\nu(\me)}\\
&\le \nu(\ma)\frac{1-\nu(\me)}{\nu(\me)} + \frac{\nu(\ma \cap \me^c)}{\nu(\me)} \\
&\le \nu(\ma)\frac{\nu(\me^c)}{\nu(\me)} + \frac{\nu(\me^c)}{\nu(\me)} \le \frac{2\nu(\me^c)}{\nu(\me)},
\end{align*}
where $\me^c$ denotes the complement of the event $\me$. By this bound and the inequality~\eqref{numebound}, we get the desired upper bound on $|\nu(\ma)-\nu'(\ma)|$.
\end{proof}

The next lemma gives an equivalent way of expressing the event $\me$ in terms of the field $A$.
\begin{lmm}\label{aelmm}
The event $\me$ happens if and only if for all $e\in E'\setminus\partial E'$
\begin{align*}
\|A_e\| \le \frac{2^{3/2} \delta}{g \sqrt{8-\delta^2}},
\end{align*}
and for all $e\in \partial E'$, 
\begin{align*}
\|A_e\| \le \frac{2^{3/2} \delta_0}{g \sqrt{8-\delta_0^2}}.
\end{align*}
\end{lmm}
\begin{proof}
Note that by \eqref{iunorm}, $\|I-V_e\|\le \delta$ if and only if $\Re(\tr(V_e)) \ge 2-\frac{1}{2}\delta^2$. This is equivalent to saying that the first coordinate of $\tau(V_e)$ is at least $1-\frac{1}{4}\delta^2$. But by \eqref{pnct}, this happens if and only if 
\[
\|P_3^{-1}(\tau(V_e))\|^2 \le \frac{4\delta^2}{8-\delta^2}. 
\]
Since $A_e = \sqrt{2}g^{-1}P_3^{-1}(\tau(V_e))$, this completes the proof.
\end{proof}
Take any $D = (D_e)_{e\in \partial E'} \in (\R^3)^{\partial E'}$. Let $\gamma_D$ denote the law of $A' := (A_e)_{e\in E'}$ given that $\me$ holds and $A_e = D_e$ for all $e\in \partial E'$. Let $\zeta_D$ denote the joint law of three independent discrete Proca fields on $\Lambda'$ with boundary condition $D$ and parameter $\frac{1}{2}\alpha^2 g^2$. The following lemma shows that $\gamma_D$ and $\zeta_D$ are close in total variation distance under suitable conditions.
\begin{lmm}\label{convlmm}
For any boundary  condition $D$ satisfying the constraint imposed by $\me$, the total variation distance between $\gamma_D$ and $\zeta_D$ is bounded above by 
\[
C_1(g^2 + \alpha^2 g^4)(g^{-1}\delta)^{16}M^d +  C_1 M^d e^{-C_2\alpha^2\delta^2},
\]
provided that 
\begin{align}\label{mcond}
M^{d-1} (\alpha g)^{(d-6)/2}\delta_0\delta^{-1} \le C_3,
\end{align}
where $C_1$, $C_2$ and $C_3$ are positive constants depending only on $d$. 
\end{lmm}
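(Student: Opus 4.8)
The plan is to compare $\gamma_D$ and $\zeta_D$ through a single intermediate measure and a triangle inequality, exploiting the fact that to leading order the density in Lemma~\ref{approxdens} is \emph{exactly} that of three independent discrete Proca fields. Write $m := \alpha g/\sqrt{2}$, so that the mass term $\frac{\alpha^2g^2}{4}\sum_e\|B_e\|^2$ of Lemma~\ref{approxdens} matches the term $\frac{m^2}{2}\sum_e x(e)^2$ of Definition~\ref{dmfdef}, while the plaquette term $\frac12\sum_p\|B_p\|^2=\frac12\sum_p\sum_{j=1}^3 (B_p^j)^2$ decouples across the three components into three copies of $\frac12\sum_p x(p)^2$. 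Thus the leading-order Hamiltonian $H_0(B):=\frac12\sum_p\|B_p\|^2+\frac{\alpha^2g^2}{4}\sum_e\|B_e\|^2$ is precisely a sum of three independent discrete Proca Hamiltonians of mass $m$, and the entire coupling between components is carried by the correction
\[
W(B):=\tfrac12\sum_{p}O(g^2 B_p^{16})+\tfrac{\alpha^2g^2}{4}\sum_{e}O(g^2 B_e^4).
\]
By locality of the interaction in Lemma~\ref{approxdens} (every plaquette and edge term meeting the interior of $\Lambda'$ involves only edges of $E'$), conditioning on $A_e=D_e$ for $e\in\partial E'$ renders the interior field Markov, so $\gamma_D$ has interior density $\propto e^{-H_0-W}\1_{\me}$ with the boundary values set to $D$, whereas $\zeta_D$ has interior density $\propto e^{-H_0}$ with the same boundary data. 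Here $\1_\me$ enforces $\|A_e\|\le r$ on interior edges, where by Lemma~\ref{aelmm} one has $\delta/g\le r=2^{3/2}\delta/(g\sqrt{8-\delta^2})\le C\delta/g$, and the boundary values satisfy $\|D_e\|\le r_0\le C\delta_0/g$.

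I would then introduce the truncated Proca measure $\tilde\zeta_D:=\zeta_D(\,\cdot\mid\me)$, which has interior density $\propto e^{-H_0}\1_\me$, and split
\[
\tv(\gamma_D,\zeta_D)\le \tv(\gamma_D,\tilde\zeta_D)+\tv(\tilde\zeta_D,\zeta_D),\qquad \tv(\tilde\zeta_D,\zeta_D)=\zeta_D(\me^c).
\]
For the first summand, $\gamma_D$ and $\tilde\zeta_D$ share the support $\me$ and differ only through the factor $e^{-W}$. On $\me$ every interior edge obeys $1+\|B_e\|\le C\delta/g=C(g^{-1}\delta)$ (using $g<\delta$), and substituting this into $W$ over the $O(M^d)$ plaquettes and edges touching the interior gives the uniform bound $|W|\le\eta$ on $\me$, with $\eta:=C_1M^d(g^2+\alpha^2g^4)(g^{-1}\delta)^{16}$; the plaquette correction contributes $g^2(g^{-1}\delta)^{16}$ per plaquette and the mass correction $\alpha^2g^4(g^{-1}\delta)^4\le\alpha^2g^4(g^{-1}\delta)^{16}$ per edge. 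Since the ratio of the two densities is $e^{-W}$ times a ratio of normalizing constants, both lying in $[e^{-\eta},e^{\eta}]$, a routine computation yields $\tv(\gamma_D,\tilde\zeta_D)\le 2\eta$ when $\eta\le 1/2$ (the bound being trivial otherwise). This produces the first term of the claim.

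For the second summand, $\zeta_D(\me^c)\le\sum_e \P_{\zeta_D}(\|A_e\|>r)$, the sum over the $O(M^d)$ interior edges, since the boundary constraint is met by construction. Under $\zeta_D$ each interior $A_e$ is Gaussian with three independent components, per-component variance at most $m^{-2}=2/(\alpha^2g^2)$ (Lemma~\ref{corrlmm} with $e=e'$ and mass $m$, conditioning only decreasing the variance), and conditional mean $\mu_e=\cov(A_e,A_{\partial E'})\,\cov(A_{\partial E'})^{-1}D$. Bounding the regression coefficients by the exponential correlation decay of Lemma~\ref{corrlmm}, together with the lower bound $\tfrac1{16d}$ on the eigenvalues of the boundary (sub)covariance and $\|D_e\|\le C\delta_0/g$, shows that hypothesis~\eqref{mcond} is exactly the condition guaranteeing $\|\mu_e\|\le r/2$ uniformly over interior edges. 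Then, using $r\ge\delta/g$,
\[
\P_{\zeta_D}(\|A_e\|>r)\le \P(\|A_e-\mu_e\|>r/2)\le C e^{-c (r/2)^2\alpha^2 g^2}\le C e^{-C_2\alpha^2\delta^2},
\]
and summing over the interior edges gives the second term $C_1 M^d e^{-C_2\alpha^2\delta^2}$.

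The main obstacle is the uniform control of the boundary-induced mean shift $\|\mu_e\|$ in the last step: one must show, through the Gaussian regression formula and the correlation-decay estimate of Lemma~\ref{corrlmm}, that the massive harmonic-type extension of the size-$\delta_0/g$ boundary data stays below $r/2\sim\delta/(2g)$ at every interior edge, and it is precisely the role of~\eqref{mcond} to enforce this across all edges and all three components simultaneously. A secondary point requiring care is the Markov/locality step used to identify $\gamma_D$ as a measure governed only by the plaquettes and edges meeting the interior of $\Lambda'$, so that its density is genuinely $\propto e^{-H_0-W}\1_\me$ with the boundary data frozen to $D$.
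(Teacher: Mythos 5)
Your proposal is correct and follows essentially the same route as the paper's proof: both compare the densities $e^{-H_0-W}\1_{\me}$ and $e^{-H_0}$ from Lemma \ref{approxdens}, bound the correction $W$ uniformly on $\me$ by $C(g^2+\alpha^2 g^4)(g^{-1}\delta)^{16}M^d$, and control $\zeta_D(\me^c)$ via Gaussian conditioning with the mean shift $O(M^{d-1}(\alpha g)^{(d-6)/2}g^{-1}\delta_0)$ estimated from Lemma \ref{corrlmm} and $\|S^{-1}\|=O(\ve^{d-2})$, which is exactly where condition \eqref{mcond} enters. Your only deviation is cosmetic: you route through the conditioned measure $\tilde\zeta_D=\zeta_D(\cdot\mid\me)$ and a triangle inequality, while the paper directly estimates $\int(Z_p^{-1}p-Z_q^{-1}q)^+$ with the factor $\rho^{-1}=\zeta_D(\me)^{-1}$, yielding the same two error terms.
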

\begin{proof}
By Lemma \ref{approxdens}, the probability density of $\gamma_D$ at a point $B\in (\R^3)^{E'}$ satisfying the given boundary values is proportional to
\begin{align}\label{pexpression}
p(B) &:= \exp\biggl(- \frac{1}{2}\sum_{p\in P'}(\|B_p\|^2 + O(g^2 B^{16}_p)) \notag \\
&\hskip1in - \frac{\alpha^2g^2}{4}\sum_{e\in E'}(\|B_e\|^2 + O(g^2 B^4_e))\biggr)1_{\me}(B),
\end{align}
where $1_\me(B) = 1$ if $B$ satisfies the condition imposed by $\me$ (as in Lemma \ref{aelmm}), and $0$ if not. 
On the other hand, the probability density of $\zeta_D$ is proportional to
\begin{align}\label{qexpression}
q(B) := \exp\biggl(-\frac{1}{2} \sum_{p\in P'}\|B_p\|^2 - \frac{\alpha^2g^2}{4}\sum_{e\in E'}\|B_e\|^2\biggr).
\end{align}
Let $Z_p$ and $Z_q$ be the normalizing constants of $p$ and $q$. Then by Lemma \ref{aelmm} and the identities \eqref{pexpression} and \eqref{qexpression} (and the fact that $g^{-1}\delta \ge 1$), we get
\begin{align}\label{zpzq}
Z_p &= \int p(B) dB \notag \\
&\ge e^{O((g^2 + \alpha^2 g^4)(g^{-1}\delta)^{16}M^d) }\int q(B) 1_{\me}(B) dB\notag \\
&= \rho e^{O((g^2 + \alpha^2 g^4)(g^{-1}\delta)^{16}M^d) }Z_q,
\end{align}
where
\[
\rho := \frac{\int q(B) 1_{\me}(B) dB}{\int q(B) dB}. 
\]
Now, if $1_\me(B)=0$, then $p(B)=0$, and hence, $(Z_p^{-1}p(B) - Z_q^{-1}q(B))^+ = 0$ (where $x^+$ denotes the positive part of a real number $x$). On the other hand, if $1_\me(B)=1$, then by~\eqref{zpzq},  \eqref{pexpression} and~\eqref{qexpression}, we get
\begin{align*}
Z_p^{-1}p(B) &\le Z_q^{-1} \rho^{-1} e^{O((g^2 + \alpha^2 g^4)(g^{-1}\delta)^{16}M^d) } q(B).
\end{align*}
Thus, if $1_\me(B)=1$, then we have
\begin{align*}
(Z_p^{-1} p(B) - Z_q^{-1}q(B))^+ &= \biggl(\frac{Z_p^{-1}p(B)}{Z_q^{-1} q(B)} - 1\biggr)^+ Z_q^{-1} q(B)\\
&\le (\rho^{-1}O((g^2 + \alpha^2 g^4)(g^{-1}\delta)^{16}M^d) + O(\rho^{-1} -1)) Z_q^{-1} q(B),
\end{align*}
provided that 
\begin{align}\label{gcondition}
(g^2 + \alpha^2 g^4)(g^{-1}\delta)^{16}M^d \le 1.
\end{align} 
Thus, under \eqref{gcondition}, 
\begin{align}\label{tvfinal}
&\int (Z_p^{-1}p(B)-Z_q^{-1}q(B))^+ dB = \int (Z_p^{-1}p(B)-Z_q^{-1}q(B))^+1_{\me}(B) dB\notag \\
&\le  (\rho^{-1}O((g^2 + \alpha^2 g^4)(g^{-1}\delta)^{16}M^d) + O(\rho^{-1} - 1)) \int Z_q^{-1} q(B) dB\notag \\
&=  \rho^{-1} O((g^2 + \alpha^2 g^4)(g^{-1}\delta)^{16}M^d) +  O(\rho^{-1} - 1),
\end{align}
where the integrals are over the set of $B$ satisfying the given boundary values. 

Now, let $W = (W^1, W^2, W^3)$ be a random configuration drawn from the probability measure $\zeta_D$. Take any $1\le i\le 3$. Define $\ve := \frac{1}{\sqrt{2}} \alpha g$, so that $W^i$ is a discrete Proca field on $\Lambda'$ with boundary condition $D$ and parameter $\ve^2$. Note that by Lemma \ref{aelmm}, 
\begin{align*}
\|D\|^2 &\le CM^{d-1} g^{-2}\delta_0^2.  
\end{align*}
Let $q$, $R$ and $S$ be as in the proof of Lemma \ref{normallmm}, but with $E$ replaced by $E'$. Then by the same calculation as for \eqref{qbbound}, we get that for an edge $e\in E'$ that is at an $\ell^\infty$ distance $r$ from the boundary $\partial E'$, 
\begin{align*}
\|q(e)\|^2 &\le C_1M^{d-1} \ve^{-2d} e^{-C_2 \ve^2 r}.
\end{align*}
Just as we had \eqref{expvar}, we now have for all $e\in E'$, 
\begin{align*}
&\E(W^i(e)) = \ve^{(d-2)/2} q(e)^* S^{-1} D, \\
&\var(W^i(e)) = \ve^{d-2}(R(e,e) - q(e)^*S^{-1}q(e))\le \ve^{d-2}R(e,e).
\end{align*}
As in Lemma \ref{normallmm}, we deduce that $\|S^{-1}\|= O(\ve^{d-2})$. Thus, for any $e\in E'$,
\begin{align*}
|\E(W^i(e))| &\le \ve^{(d-2)/2}\|q(e)\| \|S^{-1}\| \|D\| \\
&= O( M^{d-1} \ve^{(d-6)/2} g^{-1}\delta_0). 
\end{align*}
Also, as in the proof of Theorem \ref{convthm}, we have that $\|R\| = O(\ve^{-d})$. Thus, $\var(W^i(e)) = O(\ve^{-2})$ for each $e$. Since $W^i(e)$ is a Gaussian random variable for each $e$ and $i$, this shows that (by Lemma \ref{aelmm})
\begin{align*}
1-\rho &\le \sum_{i=1}^3 \sum_{e\in E'\setminus\partial E'} \P\biggl(|W^i(e)| >  \frac{2^{3/2} \delta}{g\sqrt{3} \sqrt{8-\delta^2}}\biggr)\\
&\le C_1 M^d \exp\biggl(-\frac{C_2(g^{-1}\delta)^2}{\ve^{-2}}\biggr),
\end{align*}
provided that $g^{-1}\delta \ge C M^{d-1} \ve^{(d-6)/2} g^{-1}\delta_0$ 
for a sufficiently large constant $C$ (depending only on $d$). 
Substituting the value of $\ve$ and using this bound in \eqref{tvfinal} completes the proof.
\end{proof}

We are now ready to complete the proof of Theorem \ref{sutwothm}. By Lemma \ref{scalinglmm}, it suffices to prove the claim for $c=\sqrt{2}$, that is, $\alpha g =\sqrt{2}\ve$. Suppose that $g$ and  $L$ vary as in the statement of the theorem. Let $\kappa>0$ be the number such that $g= (\sqrt{2}\ve)^{1/\kappa}$. 

By definition of $\kappa$, we have $\alpha = (\sqrt{2}\ve)/g = g^{\kappa-1}$. Let $\delta := g^{1-a\kappa}$, $\delta_0 := g^{1-b\kappa}$ and $M:= \lfloor \sqrt{2}g^{-4\kappa}\rfloor$ (assuming $M<L$, which we will eventually guarantee below), where $a$ and $b$ will be chosen later. Let $\tilde{A}\in (\R^3)^E$ be a random configuration drawn from the law of $A$ conditional on the event $\me$. Let $\tilde{Y}$ be defined using $\tilde{A}$ in place of $A$ in \eqref{yfroma}, and define $\tilde{Z}$ using $\tilde{Y}$ just as we defined $Z$ using $Y$ in~\eqref{zfromy}. Note that with the above choices of $\delta$ and $M$, we have
\begin{align*}
\frac{M^d}{\alpha^4 g^2\delta^2} + \frac{M^d\log\alpha}{\alpha^2\delta^2} &\le \frac{2^{d/2}g^{-4\kappa d}}{g^{4\kappa-4}g^2 g^{2-2a\kappa }} + \frac{2^{d/2}g^{-4\kappa d} \log (1/g)}{g^{2\kappa-2} g^{2-2a\kappa}} \\
&= 2^{d/2}g^{\kappa(-4d - 4 + 2a) } + 2^{d/2}g^{\kappa(-4d -2 + 2a)}\log(1/g).
\end{align*}
This goes to zero as $g\to 0$ if 
\begin{align}\label{abcond1}
a > 2d+2.
\end{align}
Next, note that 
\begin{align*}
\frac{M^{d-1}}{\alpha^4 g^2\delta_0^2} + \frac{M^{d-1}\log\alpha}{\alpha^2\delta_0^2} &\le  \frac{2^{(d-1)/2}g^{-4\kappa (d-1)}}{g^{4\kappa-4}g^2 g^{2-2b\kappa }} + \frac{2^{(d-1)/2}g^{-4\kappa (d-1)} \log (1/g)}{g^{2\kappa-2} g^{2-2b\kappa}} \\
&= 2^{(d-1)/2}g^{\kappa(-4(d-1) - 4 + 2b) } + 2^{(d-1)/2}g^{\kappa(-4(d-1) -2 + 2b)}\log(1/g).
\end{align*}
This goes to zero as $g\to 0$ if 
\begin{align}\label{abcond2}
b > 2(d-1)+2. 
\end{align}
Thus, if both \eqref{abcond1} and \eqref{abcond2} hold, then Lemma \ref{tvlmm1} implies that to prove Theorem~\ref{sutwothm}, it suffices to prove convergence for $\tilde{Z}$ instead of $Z$.

Let $W$ be as in the proof of Lemma \ref{convlmm}, with the set of boundary values given by $D_e = \tilde{A}_e$ for $e\in \partial E'$. Define a triple of random $1$-forms $Q = (Q^1, Q^2, Q^3)$ using $W$, via the same procedure that we used to define $Z$ using $A$ in \eqref{yfroma} and \eqref{zfromy}.  Note that
\begin{align*}
M^{d-1} (\alpha g)^{(d-6)/2} \delta_0\delta^{-1}&\le 2^{(d-1)/2}g^{\kappa(-4(d-1) + (d-6)/2 + a-b)} = 2^{(d-1)/2} g^{\kappa ( (2-7d)/2 + a-b )}. 
\end{align*}
To make this go to zero as $g\to 0$, we need
\begin{align}\label{abcond4}
a-b > \frac{7d - 2}{2}. 
\end{align}
Thus, if  \eqref{abcond4} holds, then the condition \eqref{mcond} is satisfied for sufficiently small $g$. Now note that with $C_2$ being the constant from Lemma \ref{convlmm},
\begin{align*}
&(g^2 + \alpha^2 g^4)(g^{-1}\delta)^{16}M^d +   M^d e^{-C_2\alpha^2\delta^2}\\
&\le 2^{d/2}g^2 g^{-16 a\kappa}g^{-4\kappa d} + 2^{d/2}g^{-4 \kappa d} e^{-C_2 g^{2\kappa-2} g^{2-2a\kappa}}\\
&= 2^{d/2}g^{2-\kappa(16a + 4d)} + 2^{d/2}g^{-4\kappa d} e^{-C_2g^{(2-2a)\kappa}}.
\end{align*}
To make this go to zero as $g\to 0$, it suffices to have
\begin{align}\label{abcond5}
a > 1, \ \ \ \kappa < \frac{2}{16a + 4d}. 
\end{align}
Thus, by Lemma \ref{convlmm}, if the conditions \eqref{abcond4} and \eqref{abcond5} hold, then to prove that $\tilde{Z}$ converges to the required limit, it suffices to prove the same for $Q$. First of all, since none of the bounds depend on $L$, we can fix all other parameters and send $L$ to infinity. Next, let us send $g\to 0$, keeping $\kappa$ fixed, so that $\ve \to 0$, $M\to \infty$ and $\alpha \to \infty$. Since $M\ve^3\ge g^{-\kappa} \to\infty$ as $\ve \to 0$ and $\|D\|$ is growing at most polynomially in $M$, Lemma \ref{normallmm} implies  that for any $f\in \ma(\R^d)$, $Q(f)$ converges in law to the required limit. Thus, to complete the proof of Theorem \ref{sutwothm}, we only need to find $a$ and $b$ such that the conditions \eqref{abcond1}--\eqref{abcond5} are satisfied. This is accomplished by choosing, for example, $b= 2d+1$ and $a = 6d$, and noting that $\kappa< 1/50d$. Since we are considering $g = O(\ve^{50d})$, $\kappa$ is eventually less than $1/50d$ as $\ve \to 0$.

\subsection{The key estimate for $\uone$ theory}
Let $V$ be the field obtained from $\uone$ theory after unitary gauge fixing in Subsection~\ref{uonesec}, on the finite box $\Lambda$. Note that for any $U\in \uone$, 
\begin{align}\label{iunorm2}
|1-U|^2 &=  2 - 2\Re(U). 
\end{align}
Together with Lemma \ref{vdens1}, this shows that  probability density of $V$ is proportional to $e^{-H(U)}$, where 
\[
H(U) := \frac{1}{2g^2}\sum_{p\in P} |1-U_p|^2 +\frac{\alpha^2}{2} \sum_{e\in E} |1-U_e|^2.
\]
The following lemma gives the key estimate for proving Theorem \ref{uonethm}.
\begin{lmm}\label{keylmm2}
Suppose that $\alpha \ge 2$ and $\alpha g\le 1$. Then for any $e\in E$, 
\[
\E|1 - V_e|^2 \le \frac{C}{\alpha^4 g^2} + \frac{C\log \alpha}{\alpha^2},
\]
where $C$ depends only on the dimension $d$. 
\end{lmm}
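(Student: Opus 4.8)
The plan is to transcribe the entropy-type argument from the proof of Lemma \ref{keylmm} into the abelian setting, since the scalar modulus $|1-U|$ on $\uone$ plays exactly the role that the Frobenius norm $\|I-U\|$ played on $\sutwo$. Write $p(U):=e^{-H(U)}$ for the unnormalized density of $V$, and let $\Sigma'\subseteq \uone^E$ be the set of configurations with $|1-U_e|\le \alpha^{-1}$ for every $e$. Since $\{z\in\uone:|1-z|\le\alpha^{-1}\}$ is an arc of length comparable to $\alpha^{-1}$, its uniform probability is at least $c/\alpha$, and hence the normalized Haar measure of $\Sigma'$ is at least $(C_1\alpha)^{-C_2L^d}$, just as before. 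The one computation worth recording is the abelian plaquette bound: writing $U_p$ as an ordered product of four factors $W_i\in\{U_{e_i}^{\pm1}\}$ and telescoping, $1-U_p=\sum_{i}(W_1\cdots W_{i-1})(1-W_i)$, so $|1-U_p|\le\sum_i|1-W_i|=\sum_i|1-U_{e_i}|\le 4\alpha^{-1}$ on $\Sigma'$, using $|1-z^{-1}|=|1-z|$ for $|z|=1$.

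First I would deduce, exactly as in Lemma \ref{keylmm}, that $p(U)\ge e^{-CL^d(\alpha g)^{-2}}$ for $U\in\Sigma'$, whence $\int p(U)\prod_{e\in E} dU_e\ge e^{-C_1L^d(\alpha g)^{-2}}(C_2\alpha)^{-C_3L^d}$. The identity $\bigl(\int p(U)\prod_{e\in E} dU_e\bigr)^{-1}=\E(p(V)^{-1})$ then gives the tail bound $\P(H(V)\ge t)\le e^{-t}\E(p(V)^{-1})\le e^{-t}e^{C_1L^d(\alpha g)^{-2}}(C_2\alpha)^{C_3L^d}$, and combining this with the trivial bound $\P(H(V)\ge t)\le 1$ and integrating over $t\in[0,\infty)$ yields
\[
\E(H(V))\le \frac{C_3L^d}{\alpha^2g^2}+C_4L^d\log\alpha.
\]

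Finally, I would use $H(V)\ge\frac{\alpha^2}{2}\sum_{e\in E}|1-V_e|^2$ together with the translation (periodic boundary) symmetry, which forces $\E|1-V_e|^2$ to be the same for every $e$. Dividing the displayed bound on $\E(H(V))$ by $\frac{\alpha^2}{2}|E|$ and using $|E|\ge cL^d$ cancels the volume factor and produces the claimed $L$-independent estimate $\E|1-V_e|^2\le C\alpha^{-4}g^{-2}+C\alpha^{-2}\log\alpha$. I do not expect a genuine obstacle here, as the argument is a faithful copy of the $\sutwo$ case; the only points that require a moment's care are the coefficient $\alpha^2/2$ rather than $\alpha^2/4$ coming from the $\uone$ action via \eqref{iunorm2}, and checking that the single-edge arc-length computation on the circle delivers the same form of Haar lower bound as the $S^3$ computation did. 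Both are elementary.
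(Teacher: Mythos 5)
Your proposal is correct and follows the paper's proof essentially verbatim: the paper establishes the Haar lower bound for $\Sigma'$ via the arc-length observation on $S^1$ and then states that the rest is exactly as in the proof of Lemma \ref{keylmm}, which is precisely the Markov/tail-integration argument and the symmetry step you carry out (your telescoping bound $|1-U_p|\le 4\alpha^{-1}$ is the same ``easy to see'' plaquette estimate used there). The only notable care point --- the coefficient $\alpha^2/2$ from \eqref{iunorm2} in place of $\alpha^2/4$ --- is handled correctly and does not change the form of the final bound.
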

\begin{proof}
Let $\Sigma'$ be the subset of $\Sigma$ consisting of all $U$ such that $|1-U_e |\le \alpha^{-1}$ for all $e$.  It is easy to see that the normalized Haar measure of $\Sigma'$ is $\ge (C_1\alpha)^{-C_2L^d}$ (where $C_1$ and $C_2$ depend only on $d$), because the Haar measure on $\uone$ is just the uniform probability measure on the unit circle $S^1$. The rest of the proof is exactly like the proof of Lemma~\ref{keylmm}.
\end{proof}

\subsection{Local free field approximation for $\uone$ theory}
Adopting the same notational conventions as in Lemma \ref{approxdens}, we have the following result.
\begin{lmm}\label{approxdens2}
The probability density function of the random field $A$, at a point $B\in \R^E$, is proportional to
\begin{align*}
\exp\biggl(-\frac{1}{2} \sum_{p\in P}(B_p^2 + O(g^2 B^{16}_p))- \frac{\alpha^2g^2}{2}\sum_{e\in E}(B_e^2 + O(g^2 B^4_e))\biggr). 
\end{align*}
\end{lmm}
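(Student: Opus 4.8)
The plan is to mirror the proof of Lemma~\ref{approxdens}, exploiting the fact that the $\uone$ theory is abelian and one-dimensional, so that the product of group elements around a plaquette collapses to a sum of phases. First I would set $\xi := g^{-1}P_1^{-1}$, so that $A_e = \xi(V_e)$, and write $U_e := \xi^{-1}(B_e) = P_1(gB_e)$; viewing $\uone \subseteq \C$, the explicit formula for $P_1$ gives $U_e = (2+\I gB_e)/(2-\I gB_e)$. For a plaquette $p$ with edges ordered as in the definition of $B_p$, set $U_p := U_{e_1}U_{e_2}U_{e_3}^{-1}U_{e_4}^{-1}$. By Lemma~\ref{vdens1}, Corollary~\ref{densitycor} with $n=1$, and the identity~\eqref{iunorm2}, the density of $A$ at a point $B\in\R^E$ is proportional to
\[
\exp\biggl(-\frac{1}{2g^2}\sum_{p\in P}|1 - U_p|^2 - \frac{\alpha^2}{2}\sum_{e\in E}|1 - U_e|^2\biggr)\prod_{e\in E}\frac{1}{1+g^2B_e^2},
\]
the exact analogue of~\eqref{newdensity}, except that the Jacobian exponent is $1$ instead of $3$ and there is no factor of $\tau$ or $\sqrt 2$.

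For the edge terms I would proceed by direct computation. Since $\Re(U_e) = (4-g^2B_e^2)/(4+g^2B_e^2)$, the identity~\eqref{iunorm2} gives $|1-U_e|^2 = 4g^2B_e^2/(4+g^2B_e^2) = g^2B_e^2 + O(g^4B_e^4)$, whence $\tfrac{\alpha^2}{2}|1-U_e|^2 = \tfrac{\alpha^2g^2}{2}(B_e^2 + O(g^2B_e^4))$, which is precisely the edge term in the statement.

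The plaquette term is where the abelian structure enters, and is the step I would treat most carefully. Writing $U_{e_i} = e^{\I\phi_i}$ with $\phi_i = 2\arctan(gB_{e_i}/2)$, abelianness gives $U_p = e^{\I\theta_p}$ with $\theta_p = \phi_1 + \phi_2 - \phi_3 - \phi_4$. The uniform bound $|\arctan x - x|\le |x|^3/3$ then yields $\theta_p = gB_p + O(g^3B_p^3)$. The crucial point is that $|1 - U_p|^2 = 2 - 2\cos\theta_p$ depends on $\theta_p$ only through the \emph{even} function $\cos$, so $|1-U_p|^2 = \theta_p^2 + O(\theta_p^4) = g^2B_p^2 + O(g^4B_p^{16})$; dividing by $2g^2$ produces $\tfrac12(B_p^2 + O(g^2B_p^{16}))$, the plaquette term in the statement.

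I expect the main obstacle to be precisely this extraction of the leading $B_p^2$. The naive first-order expansion $U_p = 1 + \I gB_p + O(g^2B_p^{16})$ is of no use, because taking real parts annihilates the linear term and buries the quadratic term $-\tfrac12 g^2B_p^2$ inside the error; passing to the phase $\theta_p$ (equivalently, expanding $\Re(U_p)$ to second order) is what resolves this, and the uniform $\arctan$ estimate is what keeps the remainder controlled even when $g\|B_{e_i}\|$ is not small. The remaining bookkeeping is the Jacobian: it contributes $-\sum_e\log(1+g^2B_e^2) = -\sum_e(g^2B_e^2 + O(g^4B_e^4))$ to the exponent, and since every edge lies in at least one plaquette and $B_p^{16}\ge (1+\|B_e\|)^{16}\ge B_e^2$ for each edge $e$ of $p$, this contribution is $O(g^2B_p^{16})$ and can be folded into the plaquette error term. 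It cannot be folded into the edge error, whose coefficient $\tfrac{\alpha^2g^2}{2}$ carries an extra $\alpha^2\to\infty$. Collecting the three contributions yields the stated density.
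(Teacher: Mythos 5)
Your proposal is correct, and its skeleton coincides with the paper's: the same reduction via Lemma~\ref{vdens1}, Corollary~\ref{densitycor} and the identity~\eqref{iunorm2} to the density~\eqref{newdensity2}, the same change of variables $A_e=\xi(V_e)$ with $\xi=g^{-1}P_1^{-1}$, and the same expansion of the edge terms. Where you genuinely depart from the paper is at the plaquette step, and your departure is an improvement rather than a detour. The paper multiplies the first-order expansions $U_{e_i}^{\pm 1}=1\pm\I gB_{e_i}+O(g^2B_{e_i}^4)$, records only $U_p=1+\I gB_p+O(g^2B_p^{16})$ (equation~\eqref{ueformula22} and the line after it), and then declares the proof complete; your diagnosis that this recorded expansion is, by itself, insufficient is accurate. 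Indeed, writing $U_p=1+\I gB_p+R$ with only the bound $|R|=O(g^2B_p^{16})$, unimodularity gives $|1-U_p|^2=2-2\Re(U_p)=-2\Re(R)$, so the leading term $g^2B_p^2$ is invisible; equivalently, the a priori cross term $2gB_p\Im(R)=O(g^3B_p^{17})$ becomes $O(gB_p^{17})$ after division by $2g^2$, which on the event $\me$ (where $\|B_e\|\le Cg^{-1}\delta$) is \emph{not} dominated by $g^2(g^{-1}\delta)^{16}$ and would break the total-variation estimate of Lemma~\ref{convlmm2} under the parameter choices used to prove Theorem~\ref{uonethm}. The cancellation is real --- e.g., the exact edge remainder is $-g^2B_e^2/(2-\I gB_e)$, whose real part is $-\tfrac12 g^2B_e^2+O(g^4B_e^4)$, so tracking real parts of the products to second order recovers the lemma --- but the paper's displayed equations do not record this, whereas your passage to phases ($\phi_i=2\arctan(gB_{e_i}/2)$, $|1-U_p|^2=2-2\cos\theta_p=\theta_p^2+O(\theta_p^4)$, with $|\arctan x-x|\le|x|^3/3$ and $|\arctan x|\le |x|$ giving uniform control of $\theta_p^2-g^2B_p^2$ and of $\theta_p^4$) makes the second-order structure explicit and rigorous for all $B$, not just when $g\|B_{e_i}\|$ is small. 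Your treatment of the Jacobian is also correct and settles a detail the paper leaves implicit: $\log(1+g^2B_e^2)=g^2B_e^2+O(g^4B_e^4)$ must be absorbed into the plaquette errors (every edge lies in a plaquette, and each plaquette contains boundedly many edges, so the constants depend only on $d$), and it cannot go into the edge error since that would require a constant of order $(\alpha g)^{-2}\to\infty$. In short: same route as the paper, but with the one delicate step --- extracting the quadratic real part that the first-order expansion buries --- carried out correctly and explicitly.
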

\begin{proof}
Let $P_1:\R \to S^1$ be the map defined in Subsection \ref{stereosec}. Let $\xi := g^{-1} P_1^{-1}$, so that $A_e = \xi(V_e)$. By Lemma \ref{vdens1}, Corollary \ref{densitycor}, and the identity \eqref{iunorm2}, the probability density of $A$ with respect to Lebesgue measure on $\R^E$ at a point $B\in \R^E$ is proportional to
\begin{align}\label{newdensity2}
\exp\biggl(-\frac{1}{2g^2}\sum_{p\in P} |1 - \xi^{-1}(B)_p|^2 - \frac{\alpha^2}{2} \sum_{e\in E} |1-\xi^{-1}(B)_e|^2\biggr)\prod_{e\in E} \frac{1}{4 + g^2B_e^2}. 
\end{align}
Now, note that
\begin{align*}
\xi^{-1}(B_e) = P_1(g B_e) &= \frac{4 - g^2 B_e^2}{4+g^2B_e^2}+ \I \frac{4gB_e}{4+g^2B_e^2}. 
\end{align*}
Let $e_1, e_2, e_3, e_4$ be four edges bounding a plaquette $p$, in the order described prior to the statement of Lemma \ref{approxdens}. Let $U_{e_i}:= \xi^{-1}(B_{e_i})$ for $1\le i\le 4$, and let $U_p:=U_{e_1}U_{e_2}U_{e_3}^{-1}U_{e_4}^{-1}$. Note that by the above formula, we have that for each $i$, 
\begin{align}\label{ueformula22}
U_{e_i} &=  1 + \I g B_{e_i} + O(g^2B_{e_i}^4).
\end{align}
Since $U_{e_i}\in \uone$, this also shows that
\begin{align*}
U_{e_i}^{-1} &= 1 - \I g B_{e_i} + O(g^2B_{e_i}^4).
\end{align*}
Thus,
\begin{align*}
U_p &= 1 + \I gB_p + O(g^2B_p^{16}).
\end{align*}
Together with \eqref{newdensity2} and \eqref{ueformula22}, this completes the proof.
\end{proof}

\subsection{Proof of Theorem \ref{uonethm}}\label{uoneproof}
Throughout this subsection, $C, C_1,C_2,\ldots$ will denote positive constants whose values depend only on $d$, and may change from line to line. Take some $M< L$ and let $\Lambda' := \{-M,\ldots,M\}^d$. Let $E'$ denote the set of oriented nearest-neighbor edges of $\Lambda'$ ({\it not} identifying opposite faces), and let $\partial E'$ denote the boundary edges.  Fix some $\delta_0, \delta \in (g,1)$, with $\delta_0<\delta$, to be chosen later. Let $\me$ be the event that $|1-V_e|\le \delta$ for all $e\in E'\setminus \partial E'$ and $|1-V_e|\le \delta_0$ for all $e\in \partial E'$. Let $\nu$ denote the law of $V$ and $\nu'$ denote the law of $V$ conditional on the event $\me$. The following lemma shows that $\nu$ and $\nu'$ are close in total variation distance under suitable choices of $M$ and $\delta$ (depending on $\alpha$ and~$g$). 
\begin{lmm}\label{tvlmm12}
The total variation distance between $\nu$ and $\nu'$ is bounded above by
\[
\frac{CM^d}{\alpha^4 g^2\delta^2} + \frac{CM^d\log\alpha}{\alpha^2\delta^2}+ \frac{CM^{d-1}}{\alpha^4 g^2\delta_0^2} + \frac{CM^{d-1}\log\alpha}{\alpha^2\delta_0^2},
\]
where $C$ depends only on $d$.
\end{lmm}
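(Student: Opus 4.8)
The plan is to mirror the proof of Lemma \ref{tvlmm1} essentially verbatim, since the passage from $\sutwo$ to $\uone$ is purely cosmetic at this stage: the event $\me$ is now defined through $|1-V_e|$ rather than $\|I-V_e\|$, and the per-edge second-moment control is supplied by Lemma \ref{keylmm2} in place of Lemma \ref{keylmm}. Crucially, Lemma \ref{keylmm2} yields exactly the same bound $\E|1-V_e|^2 \le C\alpha^{-4}g^{-2} + C\alpha^{-2}\log\alpha$ (with $C$ depending only on $d$) that was used in the $\sutwo$ case, so none of the subsequent counting changes.

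First I would lower-bound $\nu(\me)$. Writing $\me^c$ for the complement, a union bound splits the failure event into interior edges and boundary edges:
\[
\nu(\me^c) \le \sum_{e\in E'\setminus\partial E'} \P(|1-V_e|>\delta) + \sum_{e\in \partial E'} \P(|1-V_e|>\delta_0).
\]
Applying Chebyshev's inequality to each term and then inserting the estimate from Lemma \ref{keylmm2}, together with the facts that $|E'\setminus\partial E'| = O(M^d)$ and $|\partial E'| = O(M^{d-1})$, gives
\[
\nu(\me) \ge 1 - \frac{CM^d}{\alpha^4 g^2\delta^2} - \frac{CM^d\log\alpha}{\alpha^2\delta^2} - \frac{CM^{d-1}}{\alpha^4 g^2\delta_0^2} - \frac{CM^{d-1}\log\alpha}{\alpha^2\delta_0^2}.
\]

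Next I would invoke the standard elementary estimate for conditioning on an event: for any event $\ma$,
\[
|\nu(\ma) - \nu'(\ma)| = \frac{|\nu(\ma)\nu(\me) - \nu(\ma\cap\me)|}{\nu(\me)} \le \frac{2\nu(\me^c)}{\nu(\me)},
\]
exactly as in the proof of Lemma \ref{tvlmm1}. Taking the supremum over events $\ma$ bounds the total variation distance by $2\nu(\me^c)/\nu(\me)$, and substituting the lower bound on $\nu(\me)$ (absorbing the harmless factor $\nu(\me)^{-1}$, which is bounded since $\nu(\me)$ tends to $1$, into the constant $C$) delivers the stated bound.

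I do not expect any genuine obstacle here. The only point worth checking carefully is that Lemma \ref{keylmm2} produces the identical per-edge bound as Lemma \ref{keylmm}, which it does; once that is in hand, the edge-counting and the total-variation reduction are literally the same as in the $\sutwo$ argument, and the proof is complete.
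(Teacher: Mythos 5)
Your proposal is correct and follows exactly the route the paper takes: the paper's own proof of this lemma simply says it is identical to the proof of Lemma \ref{tvlmm1} with Lemma \ref{keylmm2} substituted for Lemma \ref{keylmm}, which is precisely the Chebyshev-plus-conditioning argument you reproduce. The only cosmetic remark is that your appeal to ``$\nu(\me)$ tends to $1$'' can be replaced by the unconditional dichotomy (if the displayed sum exceeds $1/2$ the bound is trivial since total variation distance is at most $1$; otherwise $\nu(\me)\ge 1/2$), but this is the same implicit step the paper makes in Lemma \ref{tvlmm1}.
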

\begin{proof}
This is exactly like the proof of Lemma \ref{tvlmm1}, but using Lemma \ref{keylmm2} instead of Lemma~\ref{keylmm}. We omit the details.
\end{proof}

The next lemma gives an equivalent way of expressing the event $\me$ in terms of the field $A$.
\begin{lmm}\label{aelmm2}
The event $\me$ happens if and only if for all $e\in E'\setminus\partial E'$
\begin{align*}
|A_e| \le \frac{2 \delta}{g \sqrt{4-\delta^2}},
\end{align*}
and for all $e\in \partial E'$, 
\begin{align*}
|A_e| \le \frac{2 \delta_0}{g \sqrt{4-\delta_0^2}},
\end{align*}
\end{lmm}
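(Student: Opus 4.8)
The plan is to mirror the proof of Lemma~\ref{aelmm}, the $\sutwo$ analogue, replacing the trace identity \eqref{iunorm} by its $\uone$ counterpart \eqref{iunorm2} and using the stereographic projection $P_1$ in place of $P_3$. The whole argument is a chain of equivalences that converts the norm constraint defining $\me$ into an explicit ball condition on $A_e$, one edge at a time.

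First I would translate the constraint on a single $V_e$. By \eqref{iunorm2}, for any $e$ we have $|1-V_e|\le\delta$ if and only if $2-2\Re(V_e)\le\delta^2$, i.e.\ $\Re(V_e)\ge 1-\tfrac12\delta^2$. Viewing $V_e\in\uone$ as a point on the unit circle $S^1\subseteq\R^2$, its first coordinate is exactly $\Re(V_e)$, so this is the statement that the first coordinate of $V_e$ is at least $t:=1-\tfrac12\delta^2$. Now I apply the identity \eqref{pnct} with $n=1$: the point $P_1(x)\in S^1$ has first coordinate $\ge t$ if and only if $\|x\|^2\le \tfrac{4(1-t)}{1+t}$. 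Substituting $t=1-\tfrac12\delta^2$ gives $\tfrac{4(1-t)}{1+t}=\tfrac{4\delta^2}{4-\delta^2}$, so $|1-V_e|\le\delta$ is equivalent to $\bigl|P_1^{-1}(V_e)\bigr|\le \tfrac{2\delta}{\sqrt{4-\delta^2}}$. Since $A_e=g^{-1}P_1^{-1}(V_e)$ (the map $\xi$ from Lemma~\ref{approxdens2}), this reads $|A_e|\le \tfrac{2\delta}{g\sqrt{4-\delta^2}}$. The identical computation with $\delta_0$ in place of $\delta$ handles the boundary edges $e\in\partial E'$, and conjoining the two families of conditions over $E'\setminus\partial E'$ and $\partial E'$ gives precisely the characterization of $\me$ asserted in the lemma.

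There is no real obstacle here; the argument is routine once the $\sutwo$ version is in hand. The one point worth flagging is the precise value of the constant, which differs from Lemma~\ref{aelmm}. In the $\sutwo$ case the trace produced an extra factor of $2$ (so $\Re(\tr(V_e))\ge 2-\tfrac12\delta^2$ became a threshold $1-\tfrac14\delta^2$ on the first coordinate of $\tau(V_e)$), whereas here the first coordinate of $V_e$ on $S^1$ equals $\Re(V_e)$ directly, giving the threshold $1-\tfrac12\delta^2$. This is exactly why the denominator is $\sqrt{4-\delta^2}$ rather than $\sqrt{8-\delta^2}$ and the numerator constant is $2$ rather than $2^{3/2}$, and so these constants should be recomputed as above rather than copied from the $\sutwo$ proof.
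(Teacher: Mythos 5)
Your proposal is correct and follows essentially the same route as the paper's proof: apply the identity \eqref{iunorm2} to convert $|1-V_e|\le\delta$ into $\Re(V_e)\ge 1-\tfrac{1}{2}\delta^2$, use \eqref{pnct} with $n=1$ to turn the cap condition into $|P_1^{-1}(V_e)|^2\le \tfrac{4\delta^2}{4-\delta^2}$, and conclude via $A_e = g^{-1}P_1^{-1}(V_e)$. Your closing remark on why the constants ($2$ and $\sqrt{4-\delta^2}$ versus $2^{3/2}$ and $\sqrt{8-\delta^2}$) must be recomputed rather than copied from the $\sutwo$ case is accurate, since there the threshold came from $\Re(\tr(V_e))\ge 2-\tfrac{1}{2}\delta^2$, i.e.\ first coordinate of $\tau(V_e)$ at least $1-\tfrac{1}{4}\delta^2$.
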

\begin{proof}
Note that by \eqref{iunorm2}, $|1-V_e|\le \delta$ if and only if $\Re(V_e) \ge 1-\frac{1}{2}\delta^2$.  By \eqref{pnct}, this happens if and only if 
\[
|P_1^{-1}(V_e)|^2 \le \frac{4\delta^2}{4-\delta^2}. 
\]
Since $A_e = g^{-1}P_1^{-1}(V_e)$, this completes the proof.
\end{proof}
Take any $D = (D_e)_{e\in \partial E'} \in \R^{\partial E'}$. Let $\gamma_D$ denote the law of $A' := (A_e)_{e\in E'}$ given that $\me$ holds and $A_e = D_e$ for all $e\in \partial E'$. Let $\zeta_D$ denote the law of the discrete Proca field on $\Lambda'$ with boundary condition $D$ and parameter $\alpha^2 g^2$. The following lemma shows that $\gamma_D$ and $\zeta_D$ are close in total variation distance under suitable conditions.
\begin{lmm}\label{convlmm2}
For any boundary  condition $D$ satisfying the constraint imposed by $\me$, the total variation distance between $\gamma_D$ and $\zeta_D$ is bounded above by 
\[
C_1(g^2 + \alpha^2 g^4)(g^{-1}\delta)^{16}M^d +  C_1 M^d e^{-C_2\alpha^2\delta^2},
\]
provided that 
\begin{align*}
M^{d-1} (\alpha g)^{(d-6)/2}\delta_0\delta^{-1} \le C_3,
\end{align*}
where $C_1$, $C_2$ and $C_3$ are positive constants depending only on $d$. 
\end{lmm}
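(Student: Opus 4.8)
The plan is to follow the proof of Lemma \ref{convlmm} essentially verbatim, substituting the $\uone$ ingredients for their $\sutwo$ counterparts; the only structural simplification is that $A$ is now $\R$-valued rather than $(\R^3)$-valued, so a single scalar field replaces the three decoupled components and there is no factor of three in the final union bound. First I would invoke Lemma \ref{approxdens2} to write the density of $\gamma_D$ at a point $B\in\R^{E'}$ as proportional to
\[
p(B) := \exp\biggl(-\frac{1}{2}\sum_{p\in P'}(B_p^2 + O(g^2 B_p^{16})) - \frac{\alpha^2 g^2}{2}\sum_{e\in E'}(B_e^2 + O(g^2 B_e^4))\biggr)1_{\me}(B),
\]
and the density of $\zeta_D$ as proportional to $q(B):=\exp(-\tfrac12\sum_{p\in P'}B_p^2 - \tfrac{\alpha^2 g^2}{2}\sum_{e\in E'}B_e^2)$. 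The key bookkeeping point is that the mass coefficient $\alpha^2 g^2/2$ supplied by Lemma \ref{approxdens2} matches exactly the coefficient $\ve^2/2$ in Definition \ref{dmfdef} with $\ve=\alpha g$, which is why $\zeta_D$ here carries mass parameter $\alpha g$ rather than $\alpha g/\sqrt{2}$ as in the $\sutwo$ case (where the action carried an extra factor $1/2$).

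Second, on the event $\me$, Lemma \ref{aelmm2} gives $|B_e|\le Cg^{-1}\delta$ on the support of $p$, so each error term is $O(g^2(g^{-1}\delta)^{16})$ on plaquettes and $O(g^2(g^{-1}\delta)^4)$ on edges; summing over the $O(M^d)$ plaquettes and edges of $\Lambda'$ bounds the total error in the exponent by $O((g^2+\alpha^2g^4)(g^{-1}\delta)^{16}M^d)$. This yields $Z_p \ge \rho\, e^{O(\cdots)} Z_q$ with $\rho := \int q\,1_{\me}\,dB / \int q\,dB$, exactly as in \eqref{zpzq}, and the positive-part computation leading to \eqref{tvfinal} goes through unchanged, giving the total variation bound $\rho^{-1}O((g^2+\alpha^2g^4)(g^{-1}\delta)^{16}M^d) + O(\rho^{-1}-1)$ under condition \eqref{gcondition}.

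Third, and this is the only step requiring genuine estimation, I would bound $1-\rho$. Let $W$ be the discrete Proca field on $\Lambda'$ with boundary values $D$ and mass $\ve=\alpha g$. By Lemma \ref{aelmm2}, $\|D\|^2\le CM^{d-1}g^{-2}\delta_0^2$. Reusing the spectral facts established in the proofs of Theorem \ref{convthm} and Lemma \ref{normallmm} — namely $\|R\|=O(\ve^{-d})$, $\|S^{-1}\|=O(\ve^{d-2})$, together with the exponential off-diagonal decay of $R$ from Lemma \ref{corrlmm} — I obtain $|\E(W(e))| = O(M^{d-1}\ve^{(d-6)/2}g^{-1}\delta_0)$ and $\var(W(e))=O(\ve^{-2})$ for each $e\in E'$. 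A Gaussian tail estimate together with a union bound over $e\in E'\setminus\partial E'$ then gives
\[
1-\rho \le C_1 M^d\exp\bigl(-C_2(g^{-1}\delta)^2\ve^2\bigr),
\]
valid once $g^{-1}\delta \ge C M^{d-1}\ve^{(d-6)/2}g^{-1}\delta_0$; but this last inequality is precisely the hypothesis $M^{d-1}(\alpha g)^{(d-6)/2}\delta_0\delta^{-1}\le C_3$. Substituting $\ve=\alpha g$ turns the tail into $C_1 M^d e^{-C_2\alpha^2\delta^2}$ and combines with the second step to give the stated bound.

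I do not expect any serious obstacle here: the scalar $\uone$ case is strictly simpler than the $\sutwo$ case already completed. The one point demanding care is tracking which mass parameter is correct ($\alpha g$ rather than $\alpha g/\sqrt{2}$), since it differs from Lemma \ref{convlmm}; this traces back entirely to the coefficient $\alpha^2 g^2/2$ in Lemma \ref{approxdens2}, as noted in the first step.
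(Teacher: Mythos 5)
Your proposal is correct and follows exactly the route the paper takes: the paper's own proof of Lemma \ref{convlmm2} is literally the statement that it is an exact copy of the proof of Lemma \ref{convlmm} with minor modifications, and you have spelled out those modifications accurately --- replacing Lemmas \ref{approxdens} and \ref{aelmm} by Lemmas \ref{approxdens2} and \ref{aelmm2}, dropping the factor of three in the union bound, and, most importantly, tracking that the edge coefficient $\alpha^2 g^2/2$ makes the comparison field a discrete Proca field with mass parameter $\ve=\alpha g$ rather than $\alpha g/\sqrt{2}$, so that $(g^{-1}\delta)^2\ve^2=\alpha^2\delta^2$ in the Gaussian tail. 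No gaps.
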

\begin{proof}
This proof is an exact copy of the proof of Lemma \ref{convlmm}, with minor modifications at the appropriate locations.
\end{proof}

Given Lemma \ref{convlmm2}, the rest of the proof of Theorem \ref{uonethm} goes through exactly as the proof of Theorem \ref{sutwothm}, with essentially no changes.

\section*{Acknowledgements}
I thank David Brydges, Erhard Seiler, Steve Shenker, Tom Spencer and Edward Witten for many helpful conversations, and the two anonymous referees for a number of excellent suggestions. This work was partially supported by NSF grants DMS-2113242 and DMS-2153654, and a membership at the Institute for Advanced Study.

\bibliographystyle{abbrvnat}

\bibliography{myrefs}

\end{document}